\newcommand{\Spec}{\operatorname{Spec}}
\newcommand{\Supp}{\operatorname{Supp}}
\renewcommand{\Vec}{\operatorname{\bf Vec}}
\newcommand{\Rep}{\operatorname{\bf Rep}}
\newcommand{\Ker}{\operatorname{Ker}}
\newcommand{\IM}{\operatorname{Im}}
\newcommand{\car}{\operatorname{car}}
\newcommand{\Coker}{\operatorname{Coker}}
\newcommand{\Hom}{\operatorname{Hom}}
\newcommand{\uHom}{\operatorname{\underline{Hom}}}
\newcommand{\End}{\operatorname{End}}
\newcommand{\Br}{\operatorname{Br}}
\newcommand{\Tr}{\operatorname{Tr}}
\newcommand{\Alb}{\operatorname{Alb}}
\newcommand{\Pic}{\operatorname{Pic}}
\newcommand{\NS}{\operatorname{NS}}
\newcommand{\gm}{{\operatorname{gm}}}
\newcommand{\gen}{{\operatorname{gen}}}
\newcommand{\eff}{{\operatorname{eff}}}
\newcommand{\degtr}{{\operatorname{degtr}}}
\renewcommand{\o}{{\operatorname{o}}}
\newcommand{\rat}{{\operatorname{rat}}}
\renewcommand{\hom}{{\operatorname{hom}}}
\newcommand{\num}{{\operatorname{num}}}
\newcommand{\proj}{{\operatorname{proj}}}
\newcommand{\Nis}{{\operatorname{Nis}}}
\newcommand{\Ens}{\operatorname{\bf Ens}}
\newcommand{\Sm}{\operatorname{\bf Sm}}
\newcommand{\DM}{\operatorname{\bf DM}}
\newcommand{\Cor}{\operatorname{\bf Cor}}
\newcommand{\Corf}{\operatorname{\bf Cor_f}}
\newcommand{\PST}{\operatorname{\bf PST}}
\newcommand{\Mot}{\operatorname{\bf Mot}}
\newcommand{\Chow}{\operatorname{\bf Chow}}
\newcommand{\corps}{\operatorname{\bf corps}}
\newcommand{\Cat}{\operatorname{\bf Cat}}
\newcommand{\Grp}{\operatorname{\bf Grp}}
\newcommand{\SAb}{\operatorname{\bf SAb}}
\newcommand{\Ab}{\operatorname{\bf Ab}}
\newcommand{\tore}{\operatorname{\bf tore}}
\newcommand{\res}{\operatorname{\bf res}}
\newcommand{\by}{\xrightarrow}
\newcommand{\iso}{\by{\sim}}
\newcommand{\inj}{\hookrightarrow}
\newcommand{\tto}{\dashrightarrow}
\newcommand{\surj}{\rightarrow\!\!\!\!\!\rightarrow}
\newcommand{\colim}{\varinjlim}
\renewcommand{\lim}{\varprojlim}
\DeclareFontFamily{U}{wncy}{}
\DeclareFontShape{U}{wncy}{m}{n}{%
<5>wncyr5%
<6>wncyr6%
<7>wncyr7%
<8>wncyr8%
<9>wncyr9%
<10>wncyr10%
<11>wncyr10%
<12>wncyr6%
<14>wncyr7%
<17>wncyr8%
<20>wncyr10%
<25>wncyr10}{}
\DeclareMathAlphabet{\cyr}{U}{wncy}{m}{n}
\newcommand{\Sha}{\cyr{X}}
\newcommand{\sha}{\cyr{x}}
\newcommand{\sA}{\mathcal{A}}
\newcommand{\sB}{\mathcal{B}}
\newcommand{\sC}{\mathcal{C}}
\newcommand{\sH}{\mathcal{H}}
\newcommand{\sK}{\mathcal{K}}
\newcommand{\sM}{\mathcal{M}}
\newcommand{\sN}{\mathcal{N}}
\newcommand{\sS}{\mathcal{S}}
\newcommand{\sX}{\mathcal{X}}
\newcommand{\sY}{\mathcal{Y}}
\newcommand{\sZ}{\mathcal{Z}}
\newcommand{\A}{\mathbf{A}}
\newcommand{\C}{\mathbf{C}}
\newcommand{\N}{\mathbf{N}}
\renewcommand{\P}{\mathbf{P}}
\newcommand{\Q}{\mathbf{Q}}
\newcommand{\Z}{\mathbf{Z}}
\newcommand{\bL}{\mathbb{L}}
\newcommand{\cl}{\operatorname{cl}}
\newcommand{\un}{\mathbf{1}}
\renewcommand{\epsilon}{\varepsilon}
\renewcommand{\phi}{\varphi}
\newtheorem{lemme}{Lemme}[section]
\newtheorem{prop}[lemme]{Proposition}
\newtheorem{thm}[lemme]{Th\'eor\`eme}
\newtheorem{cor}[lemme]{Corollaire}
\newtheorem{Th}{Th\'eor\`eme}
\newtheorem{Coro}{Corollaire}
\theoremstyle{definition}
\newtheorem{defn}[lemme]{D\'efinition}
\theoremstyle{remark}
\newtheorem{rque}[lemme]{Remarque}
\newtheorem{ex}[lemme]{Exemple}
\newtheorem{exs}[lemme]{Exemples}
\newenvironment{thlist}{\begin{list}{\rm{(\roman{enumi})}}%
{\usecounter{enumi}}}%
{\end{list}}
\numberwithin{equation}{section}
\begin{document}

\title{Motifs et adjoints}
\author{Bruno Kahn}
\address{Institut de Math\'ematiques de Jussieu - PRG\\Case 247\\
4 place Jussieu\\
75252 Paris Cedex 05\\
FRANCE}
\email{bruno.kahn@imj-prg.fr}
\date{7 mars 2017}
\begin{abstract} On démontre dans de nombreux cas l'existence d'adjoints à l'extension des scalaires sur des catégories de nature motivique, dans le cadre d'extensions de corps. Cette situation est différente de celle, étudiée habituellement, d'un morphisme de type fini de schémas. On en tire diverses applications, dont une construction d'un ``motif de Tate-\v Safarevi\v c'' attaché à une variété abélienne sur un corps de fonctions. On en déduit aussi une approche possible de la conjecture de Bloch sur les surfaces, par réduction aux courbes.
\end{abstract}
\begin{altabstract} We show in many cases the existence of adjoints to extension of scalars on categories of motivic nature, in the framework of field extensions. This is to be contrasted with the more classical situation where one deals with a finite type  morphism of schemes. Among various applications, one is a construction of a ``Tate-\v Safarevi\v c motive" attached to an abelian variety over a function field. We also deduce a possible approach to \text{Bloch's} conjecture on surfaces, by reduction to curves.
\end{altabstract}
\subjclass{(2010): 14C25, 14E05, 18F99}
\maketitle

\enlargethispage*{60pt}

\tableofcontents

\section*{Introduction}

Le formalisme des six op\'erations, imagin\'e par Grothendieck, est au c\oe ur des techniques cohomologiques et motiviques en g\'eom\'etrie alg\'ebrique: \'etant donn\'e une cat\'egorie de sch\'emas $\sC$, il s'agit d'un syst\`eme de pseudo-foncteurs
\[f^*,f_*,f_!,f^!:\sC\to \Cat\]
o\`u $\Cat$ est la $2$-cat\'egorie des cat\'egories, les quatre foncteurs prennent la m\^eme valeur $\sM(X)$ sur un objet $X\in \sC$, et pour un morphisme $f:X\to Y$:
\begin{itemize}
\item $f^*,f^!$ envoient $\sM(Y)$ vers $\sM(X)$;
\item $f_*,f_!$ envoient $\sM(X)$ vers $\sM(Y)$;
\item $f_*$ est adjoint \`a droite de $f^*$; $f^!$ est adjoint \`a droite de $f_!$.
\end{itemize}

On a des propri\'et\'es suppl\'ementaires: th\'eor\`emes de changement de base, structures tensorielles, dualit\'e; je n'entrerai pas dans les d\'etails, renvoyant à \cite{resdu}, SGA4 et \cite{ayoub}. 

Le but de cet article est d'\'etudier une situation plus simple qui conduit \`a un nombre surprenant de r\'esultats: celle o\`u $\sC$ est une \emph{cat\'egorie de corps}. Plus pr\'ecis\'ement, soit $\sM$ un pseudo-foncteur de la cat\'egorie des corps commutatifs vers une $2$-cat\'egorie.
Si $K/k$ est une extension, on a donc un $1$-morphisme
\begin{equation}\label{eq1}
\sM(k)\to \sM(K).
\end{equation}

Les questions que je me propose d'\'etudier dans cette article sont: 

\begin{enumerate}
\item[(1)] Quand le $1$-morphisme \eqref{eq1} admet-il un adjoint (\`a gauche ou \`a droite)?
\item[(2)] Si $\sM'$ est un autre pseudo-foncteur (\`a valeurs dans la m\^eme $2$-cat\'egorie) et 
qu'on a un morphisme de pseudo-foncteurs $\phi:\sM\to \sM'$, quel est l'effet de
ce morphisme sur les adjoints de (1) (s'ils existent)?
\end{enumerate}

Bien entendu, ces questions ont peu de chances d'avoir
une r\'eponse int\'eressante dans une telle g\'en\'eralit\'e. Pr\'ecisons donc que les pseudo-foncteurs auxquels je
m'int\'eresse sont de type motivique: par exemple les motifs de Chow, les $1$-motifs, une
cat\'egorie de motifs triangul\'es de Voevodsky\dots\ mais aussi d'autres. Le premier exemple non trivial dans cette direction est la $K/k$-trace des variétés abéliennes.

Le premier probl\`eme est un probl\`eme d'existence, le second est un probl\`eme de
changement de base. Nous allons voir que le premier a une r\'eponse positive dans un grand nombre de cas int\'eressants, et que la r\'eponse au second n'est pas toujours celle qu'on
pourrait penser.

Concernant (1), on utilisera essentiellement trois m\'ethodes pour prouver l'existence d'un
adjoint:

\begin{itemize}
\item Un argument cat\'egorique abstrait.
\item Un calcul concret.
\item Supposant $K/k$ de type fini (condition plus ou moins n\'ecessaire pour l'existence d'un
adjoint), soit $U$ un mod\`ele de $K$ de type fini sur $k$. On suppose que $\sM$ s'\'etend en un
(pseudo-)foncteur sur la cat\'egorie des $k$-sch\'emas de type fini: au morphisme $f:U\to \Spec
k$ est donc associ\'e un foncteur
\[f^*:\sM(k)\to \sM(U).\]
Supposons que $f^*$ ait un adjoint (\`a gauche pour fixer les id\'ees) $f_\sharp$. On peut faire
varier $U$. Si $\sM(K)=2-\colim \sM(U)$, on a des chances d'obtenir l'adjoint de $\sM(k)\to
\sM(K)$ comme ``limite" des $f_\sharp$, cf. proposition \ref{p.2colim} pour un cas particulièrement simple.
\end{itemize}

Concernant (2), si $\phi$ est pleinement fidèle, le morphisme de changement de base \eqref{eq1.1} associé est en général un isomorphisme (lemme \ref{l3.0}). La situation est totalement différente quand $\phi$ est, disons, plein et essentiellement surjectif mais pas fidèle: passant de $\sM$ à $\sN$, on a alors tendance à ``perdre de l'adjoint'', cf. proposition \ref{p3.1} et remarque \ref{r8.1}.

Passons aux principaux résultats de l'article.

\begin{Th} Soit $\sM=\Mot_\num(-,\Q)$ la théorie des motifs numériques à coefficients rationnels.  Alors, pour toute extension primaire  $K/k$,  le foncteur 
\[\Mot_\num(k,\Q)\to \Mot_\num(K,\Q)\]
admet un adjoint \`a gauche et un adjoint \`a droite. Ces adjoints sont ca\-no\-ni\-que\-ment isomorphes, commutent aux twists et respectent les poids.
\end{Th}

Voir th. \ref{t1}, prop. \ref{p2} et prop. \ref{p5.1}. En se restreignant aux motifs effectifs de poids $1$, on retrouve la $K/k$-trace et la $K/k$-image des varétés abéliennes (à isogénie près): ex. \ref{ex5.1}. 

\begin{Th} Soit $p=1$ ou un nombre premier, et soit $\sM=S_b^{-1}\Sm$, $\Chow^\o(-,\Z[1/p])$ ou $\DM_\gm^\o$ la théorie des variétés lisses birationnelles \cite{birat}, des motifs de Chow birationnels à coefficients $\Z[1/p]$ \cite{birat-pure} ou des motifs triangulés géométriques birationnels \cite{birat-tri}. Alors, pour toute extension de type fini séparable  $K/k$ de corps d'exposant caractéristique $p$,  le foncteur 
\[\sM(k)\to \sM(K)\]
admet un adjoint à gauche.
\end{Th}

Voir th. \ref{t3}, \ref{t4} et \ref{t5}.

Une application de ces résultats a déjà été donnée dans \cite[prop. 5.1.4]{birat-pure}. Pour le corollaire suivant, on dit qu'une $k$-variété propre $X$ est \emph{universellement $R$-triviale} (resp. \emph{universellement $CH_0\otimes A$-triviale}) si $X(E)/R$ est réduit à un point pour toute extension $E/k$, où $R$ est la $R$-équivalence (resp. si $CH_0(X_E)\otimes A\iso A$ pour toute extension $E/k$, où $A$ est un anneau commutatif). Soit $p=1$ ou un nombre premier.

\begin{Coro} Soit $\pi:\sX\tto Y$ une application rationnelle dominante de $k$-variétés propres, lisses et connexes, dont la restriction à un ouvert de définition est propre. Supposons que la fibre générique $X$ de $\pi$ soit universellement $R$-triviale (resp. universellement $CH_0\otimes A$-triviale, où $A$ est une $\Z[1/p]$-algèbre). Alors $\pi$ induit un isomorphisme $\sX(L)/R\iso Y(L)/R$ (resp. $CH_0(\sX)\otimes A\iso CH_0(Y)\otimes A$) pour toute extension $L/k$  de corps d'exposant caractéristique $p$. 
\end{Coro}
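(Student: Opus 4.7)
\emph{Plan.} The strategy is to use Theorem 2 to recognize $\sX$ and $Y$ as the images, under the left adjoint $L$ to extension of scalars from $k$ to $K:=k(Y)$, of the $K$-objects $X$ and $\Spec K$ respectively, in the appropriate birational motive category $\sM$.

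First I translate the triviality hypothesis. The universal $R$-triviality (resp. universal $CH_0\otimes A$-triviality) of $X$ over $K$ is equivalent, by Yoneda, to the structural morphism $X\to\Spec K$ being an isomorphism in $\sM(K)=S_b^{-1}\Sm(K)$ (resp.\ in $\Chow^\o(K,\Z[1/p])$ after tensoring with $A$): for a smooth proper $K$-variety $Z$, one has $\Hom_{S_b^{-1}\Sm(K)}(\Spec E,Z)=Z(E)/R$ and the Chow analog computes $CH_0(Z_E)\otimes A$, for every field extension $E/K$, so the triviality hypothesis on $X$ exactly says that the Yoneda functors of $X$ and of $\Spec K$ coincide.

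Next, I apply the left adjoint $L\colon\sM(K)\to\sM(k)$ given by Theorem 2 to this isomorphism, obtaining an iso $L([X])\iso L([\Spec K])$ in $\sM(k)$. The crucial computation is then the identifications $L([\Spec K])\cong [Y]$ and $L([X])\cong [\sX]$, together with the fact that the resulting isomorphism is the morphism induced by $\pi$. Both follow from the universal property of the adjunction combined with the classical formula
\[\Hom_{\sM(k)}([Y],Z)=\Hom_{\sM(K)}([\Spec K],Z_K)\]
expressing $\sM(k)$-morphisms out of $Y$ as $R$-equivalence classes (resp.\ $CH_0\otimes A$) of the generic fiber over $K$, and the analogous formula with $\sX$ replacing $Y$ and $k(\sX)=K(X)$ replacing $K$.

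Finally, given any extension $L/k$, extending scalars produces an iso $[\sX_L]\iso [Y_L]$ in $\sM(L)$ induced by $\pi_L$, and applying $\Hom_{\sM(L)}(\Spec L,-)$ then yields the asserted isomorphism on $R$-equivalence classes (resp.\ on $CH_0\otimes A$, using the $\Z[1/p]$-algebra structure on $A$ to base change $\Chow^\o(-,\Z[1/p])$ to $A$-coefficients). The main obstacle I expect is the explicit identification $L([\Spec K])=[Y]$ and $L([X])=[\sX]$: one must verify that the 2-colimit / $f_\sharp$ construction of $L$ produced in Theorem 2 specializes to these natural models of $K$ and $K(X)$, which should reduce to the birational Hom formula recalled above in each of the two relevant categories $S_b^{-1}\Sm$ and $\Chow^\o(-,\Z[1/p])$.
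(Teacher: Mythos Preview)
Your proposal is correct and follows essentially the same route as the paper: translate the universal triviality hypothesis via Yoneda and the Hom formulas \eqref{eq6.1}/\eqref{eq6.3} into the statement that $X\to\Spec K$ is an isomorphism in $\sM(K)$, apply the left adjoint $f_\sharp$ (your $L$) to obtain an isomorphism in $\sM(k)$, and read off the conclusion again via Yoneda. The ``main obstacle'' you anticipate --- identifying $f_\sharp[\Spec K]=[Y]$ and $f_\sharp[X]=[\sX]$ --- is not an obstacle at all: it is exactly the last assertion of Theorem~\ref{t3} (and its analogue in the proof of Theorem~\ref{t4}), which says that $f_\sharp$ sends a $K$-variety to any smooth $k$-model of it; your step~4 (extend scalars to $L$, then apply $\Hom(\Spec L,-)$) is a harmless variant of applying the Hom formula directly over $k$.
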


Voir cor. \ref{c6.1a} et \ref{c6.2}. En particulier, si $X$ et $Y$ sont universellement $R$-triviales, il en est de même de $\sX$: c'est une variante d'un célèbre théorème de Graber, Harris et Starr \cite[cor. 1.3]{ghs}, mais ici le corps de base est quelconque. Tandis que le présent contexte catégorique n'est pas trop difficile à éviter pour traiter le cas de $CH_0$, en utilisant le lemme de déplacement pour les $0$-cycles \cite{vial}, cela semble moins évident dans le cas des classes de $R$-équivalence.

Le théorème qui suit construit un ``motif de Tate-\v Safarevi\v c'', introduit dans \cite{surfsurcourbe} de manière différente (voir remarque \ref{r9.2}). Notons $\Chow^\eff$ (resp. $\Mot_H^\eff$) la théorie des motifs de Chow effectifs (resp. celle des motifs effectifs modulo l'équivalence homologique relative à une cohomologie de Weil $H$). 

\begin{Th}\label{T3} Soit $K/k$ un corps de fonctions d'une variable, et soit $A$ une variété abélienne sur $K$. Alors il existe un motif $\sha(A,K/k)\in \Chow^\eff(k,\Q)$, dont l'image dans $\Mot_H^\eff(k,\Q)$ est de poids $2$ pour toute cohomologie de Weil $H$ classique (\S \ref{s2.2.2}), et tel que 
\[H_l(\sha(A,K/k))\simeq V_l(\Sha(A^*,K k_s))(-1)\]
pour tout nombre premier $l\ne \car k$, où $\Sha(A^*,K k_s)$ est le groupe de Tate-\v Safarevi\v c ``géométrique'' de la duale de $A$ et $H_l$ est la réalisation $l$-adique. Ce motif est fonctoriel en $A$ pour les homomorphismes de variétés abéliennes.
\end{Th}

Voir cor. \ref{c8.2} et th. \ref{t8.1}.

Le dernier résultat a trait aux ``correspondances au point générique'' étudiées par Bloch, Rovinsky et Beilinson. Pour toute $k$-variété projective lisse $X$, on note $\End_\gen(X)$ la $\Q$-algèbre de ces correspondances \cite{beilinson}.

\begin{Th} Soient $K/k$ une extension de type fini, $X$ une $K$-variété projective lisse et $\sX$ un $k$-modèle projectif lisse de $X$. Alors il existe un homomorphisme canonique d'algèbres
\[\End_\gen(X)\to \End_\gen(\sX)\]
dont on peut expliciter le conoyau.
\end{Th}

Voir th. \ref{t9.1}.

En particulier, si $\degtr(K/k)=1$ et si $X$ est une courbe (donc $\sX$) est une surface), $\End_\gen(X)$ est une $\Q$-algèbre semi-simple de dimension finie; ceci donne une approche possible de la conjecture de Bloch sur les surfaces (rem. \ref{r9.1}).
\bigskip

Dans la première section, on fait des ``rappels'' sur les théories motiviques et on fixe quelques notations. La seconde section est une liste d'exemples qui interviennent dans la suite de l'article. La troisième est une collection de sorites sur les adjonctions. Dans la section \ref{s4}, on traite le cas d'une extension finie: ces résultats ne sont ni difficiles ni surprenants (pour certaines théories motiviques, ils sont des cas extrêmement particuliers de ceux de \cite{ayoub}), mais ils ont déjà une application non triviale à un résultat d'effectivité pour les motifs purs: le théorème \ref{t2}.

La section \ref{s5} est consacrée aux motifs numériques. Vu le théorème de semi-simplicité de Jannsen \cite{jannsen}, l'existence des adjoints est ici un cas particulier de leur existence pour tout foncteur pleinement fidèle entre catégories semi-simples (proposition \ref{p1}). %Pour résumer la situation, ``tout se passe bien'' dans cette section, les adjoints commutant à toutes les opérations envisageables: passage aux motifs effectifs, aux motifs birationnels, ils commutent aux twists et respectent les poids. On obtient ainsi une vaste généralisation de la $K/k$-trace et de la $K/k$-image pour les variétés abéliennes (à isogénie près), voir exemple \ref{ex5.1}. Ces résultats faciles ne sont toutefois pas triviaux, certains utilisent le théorème \ref{t2} de manière essentielle.

Dans la section \ref{s6}, on s'attaque aux catégories birationnelles étudiées dans \cite{birat,birat-pure,birat-tri}. Ici, la technique utilisée pour montrer l'existence d'un adjoint à gauche est celle de la proposition \ref{p.2colim}. Elle marche de manière remarquablement aisée et uniforme. On donne aussi d'autres démonstrations, fondées sur le calcul explicite des ensembles de morphismes dans ces catégories et qui fournissent peut-être une meilleure intuition de ce qui se passe; ces démonstrations ont toutefois l'inconvénient d'utiliser la résolution des singularités, sous la forme de Hironaka ou sous celle de de Jong.  %on en tire d'autres conséquences intéressantes dans les corollaires \ref{c6.1}, \ref{c6.1a} et \ref{c6.2}. Le second est un analogue d'un théorème de Graber, Harris et Starr (la connexité rationnelle s'empile par fibrations), et le troisième implique que le motif birationnel à coefficients $\Q$ d'une variété projective lisse est le même que celui de son ``quotient rationnel'' au sens de Campana et Koll\'ar.

Dans la section \ref{s7}, on donne des exemples où les adjoints n'existent pas: extensions algébriques infinies, extension régulières infinies pour les motifs de Chow effectifs. Ce dernier cas (théorème \ref{p10.1}) n'est pas entièrement trivial et repose sur le fait qu'une variété abélienne non nulle a un rang positif sur un corps algébriquement clos de dimension de Kronecker $>0$. Le cas où la base de l'extension est  algébrique sur un corps fini est d'ailleurs une exception, voir proposition \ref{p7.1}.

Dans la section \ref{s8}, on fait quelques calculs explicites d'adjoints dans le cas des motifs de Chow birationnels. On en déduit le corollaire \ref{c8.2} (et donc une partie du théorème \ref{T3}).

Enfin, dans la section \ref{s9}, on applique ces résultats aux catégories de ``motifs au point générique'' développées dans \cite[\S 7.8]{kmp} à partir de l'idée de Bloch-Rovinsky-Beilinson.

Il reste à décrire ce qui n'est pas fait dans cet article. J'avais initialement prévu de traiter les $1$-motifs: cela s'avère beaucoup plus délicat que je ne le pensais, et devra faire l'objet d'un autre travail. Dans le cas des motifs purs, l'existence ou l'inexistence d'adjoints n'est claire que pour l'équivalence rationnelle et l'équivalence numérique: je ne sais rien dans le cas d'une relation intermédiaire (voir \S \ref{5.7} pour le cas des motifs homologiques). Dans le cas des motifs effectifs, un analogue du théorème \ref{p10.1} pour l'équivalence algébrique semble particulièrement intéressant à décider: je soup\c conne que le groupe de Griffiths donne une obstruction à l'existence d'un adjoint, mais je n'ai pas réussi à fabriquer un argument convaincant pour le démontrer. Enfin, la démonstration du théorème \ref{t8.1} est incomplète.

\enlargethispage*{30pt}

\subsection*{Remerciements} Ces r\'esultats ont \'et\'e initialement inspir\'es par mon
travail avec R. Sujatha sur les motifs birationnels \cite{Birat,localisation,birat,birat-pure,birat-tri}. Une partie de cet article a
\'et\'e imagin\'ee au cours d'un s\'ejour \`a l'IMPA en ao\^ut 2008, financ\'e par la
Coop\'eration France-Br\'esil. Je tiens \`a remercier cette derni\`ere de son soutien, l'IMPA
de son hospitalit\'e, et Marc Hindry et Am\'\i lcar Pacheco pour des \'echanges lors de ce
s\'ejour. Je remercie \'e\-ga\-le\-ment Ahmed Abbes, Joseph Ayoub, Daniel Bertrand, Mikhail Bondarko, Antoine Chambert-Loir,
Denis-Charles Cisinski, Pierre Colmez, Fr\'ed\'eric D\'eglise, Mathieu Florence, V. Srinivas, Claire Voisin 
et J\"org Wildeshaus pour des discussions autour de cet article. Finalement, je remercie le rapporteur pour ses commentaires qui m'ont permis d'améliorer la présentation.

\section{Notations et conventions}

\subsection{} Pour toute catégorie $\sA$, on note $Ob(\sA)$ (resp. $Fl(\sA)$) la classe des objets (resp. des morphismes) de $\sA$; si $X,Y\in \sA$, on note $\Hom_\sA(X,Y)$ ou $\sA(X,Y)$ l'ensemble des morphismes de $X$ vers $Y$, en choisissant la notation la plus pratique selon le contexte.

\subsection{} Soit $\corps$ la cat\'egorie des corps commutatifs (morphismes: les extensions de
corps). On se donne une sous-cat\'egorie $\sK\subset\corps$. Exemples: la sous-cat\'egorie
pleine des corps d'une caract\'eristique donn\'ee, la sous-cat\'egorie non pleine form\'ee des
extensions finies s\'eparables, etc.

\subsection{} Soit $\sC$ une $2$-cat\'egorie (par exemple la $2$-cat\'egorie $\Cat$ des 
cat\'egories, la $2$-cat\'egorie des cat\'egories additives, celle des
cat\'egories triangul\'ees, celle des cat\'egories $\Q$-lin\'eaires tensorielles rigides\dots).
On s'int\'eresse aux pseudo-foncteurs covariants \cite[VI.8]{SGA1}
\[\sM:\sK\to \sC.\]

Un tel pseudo-foncteur consiste donc en

\begin{itemize}
\item un objet $\sM(k)\in \sC$ pour tout $k\in \sK$;
\item un $1$-morphisme $\sM(f)=f_\sM^*=f^*:\sM(k)\to \sM(K)$ pour toute extension $f:k\to K$ de
$\sK$;
\item \'etant donn\'e deux extensions composables $f:k\to K$ et $g:K\to L$, un
$2$-\emph{isomorphisme}
\[c^\sM_{f,g}=c_{f,g}:(g\circ f)^*\iso g^*\circ f^* \]
o\`u les $c_{f,g}$ satisfont aux relations de $2$-cocycle habituelles. (En particulier, $(1_k)^*=Id_{\sM(k)}$.)
\end{itemize}

On dira que $\sM$ est une \emph{th\'eorie motivique} (d\'efinie sur $\sK$, \`a valeurs dans
$\sC$). On précisera ``théorie motivique additive, triangulée'', etc. si $\sC$ est (etc.).

\subsection{}\label{s1.3} Si $\sM,\sN:\sK\to \sC$ sont deux th\'eories motiviques, un
\emph{morphisme de th\'eories motiviques}
$\phi:\sM\to\sN$ est donn\'e par
\begin{itemize}
\item Pour tout $k\in\sK$, un $1$-morphisme $\phi_k:\sM(k)\to \sN(k)$.
\item Si $f:k\to K$ est un morphisme de $\sK$, un $2$-\emph{isomorphisme}
\[v_f^\phi=v_f:\phi_K f_\sM^*\iso f_\sN^*\phi_k\]
satisfaisant aux relations de $1$-cocycle habituelles (relativement \`a $c^\sM$ et $c^\sN$).
\end{itemize}

\enlargethispage*{30pt}

\subsection{} Nous n'utiliserons pas la notion d'(iso)morphisme entre mor\-phis\-mes de
th\'eories motiviques (qui existe).

\subsection{} Rappelons que la notion d'adjoint d'un foncteur se
g\'en\'eralise directement au cas des $1$-morphismes dans une $2$-cat\'egorie
\cite[1.1]{ayoub}. Un adjoint est d\'etermin\'e \`a $2$-isomorphisme unique pr\`es. \'Etant
donn\'e une th\'eorie motivique
$\sM$ d\'efinie sur $\sK$, nous rencontrerons de nombreux exemples
o\`u les morphismes
$f^*_\sM$ admettent un adjoint \`a gauche $f_\sharp^\sM$ (ou \`a droite $f_*^\sM$). Rappelons que, dans ce cas, on peut
relier ces adjoints de mani\`ere coh\'erente, de fa\c con \`a munir $\sM$ d'une structure
$1$-contravariante sur $\sK$ \cite[prop. 1.1.17]{ayoub}.

Si $f_\sharp^\sM$ existe pour tout $f$, on dira que $\sM$ \emph{admet des adjoints \`a gauche}.

\subsection{}\label{1.6} Soient $\sM,\sN:\sK\to \sC$ deux th\'eories motiviques, et soit $\phi:\sM\to \sN$
un morphisme de th\'eories motiviques. Supposons que, pour $f:k\to K$ dans $\sK$, $f_\sM^*$
et $f_\sN^*$ admettent des adjoints \`a gauche $f^\sM_\sharp$ et $f^\sN_\sharp$. On a donc des
$2$-morphismes d'adjonction
\begin{align*}
1\by{\eta_\sM}f_\sM^* f^\sM_\sharp,\qquad &f^\sM_\sharp f_\sM^*\by{\epsilon_\sM} 1\\ 
1\by{\eta_\sN}f_\sN^* f^\sN_\sharp,\qquad &f^\sN_\sharp f_\sN^*\by{\epsilon_\sN} 1 
\end{align*}
satisfaisant les relations habituelles. On d\'eduit alors du $2$-isomorphisme $v^\phi_f$ de \ref{s1.3} le
\emph{morphisme de changement de base} \cite[prop. 1.1.9]{ayoub}
\begin{equation}\label{eq1.1}
w_f^\phi: f^\sN_\sharp\phi_K\to \phi_k f^\sM_\sharp
\end{equation}
défini comme la composition
\[
f^\sN_\sharp\phi_K\by{f^\sN_\sharp\phi_K*\eta_\sM}f^\sN_\sharp\phi_K f^*_\sM f_\sharp^\sM\by{f^\sN_\sharp*v_f^\phi*f_\sharp^\sM} f^\sN_\sharp f^*_\sN\phi_k  f_\sharp^\sM\by{\epsilon_\sN * \phi_k f^\sM_\sharp} \phi_k f^\sM_\sharp.
\]

\section{Exemples} 

Tous ces exemples sont utilisés dans la suite de l'article.

\subsection{Le mod\`ele de base} Pour tout $k\in \corps$, soit $\Sm(k)\in\Cat$ la cat\'egorie des
$k$-schémas lisses séparés de type fini,   la structure de $2$-foncteur étant donnée par l'extension des scalaires. Les cat\'egories $\Sm(k)$ poss\`edent produits et coproduits finis. {\bf Variantes}: prendre les vari\'et\'es projectives lisses $\Sm^\proj$,  etc. On peut aussi prendre les vari\'et\'es lisses (ou projectives lisses) point\'ees (par un point  rationnel): cela d\'efinit de nouvelles th\'eories motiviques $\Sm_\bullet,\Sm_\bullet^\proj$\dots

\subsection{Motifs purs}\label{s2.2} Choisissons un \emph{couple ad\'equat} $(A,\sim)$ au sens de \cite[1.1]{birat-pure}: $A$ est
un anneau commutatif et $\sim$ est une relation d'\'equivalence ad\'equate sur les cycles
alg\'ebriques
\`a coefficients dans $A$. On a les th\'eories motiviques suivantes, munies de morphismes de
th\'eories motiviques:
\[\Sm^\proj\to \Cor_\sim(-,A)\to \Mot_\sim^\eff(-,A)\to \Mot_\sim(-,A)\]
o\`u $\Cor_\sim$ (resp. $\Mot_\sim^\eff,\Mot_\sim$) d\'esigne la cat\'egorie des correspondances
(resp. des motifs purs effectifs, des motifs purs) avec la convention \emph{covariante}. Lorsque $\sim$ est l'équivalence rationnelle, on abrège $\Mot_\rat^\eff$ et $\Mot_\rat$ en $\Chow^\eff$ et $\Chow$.

Pour que cette d\'efinition ait un sens, il faut que la relation $\sim$ soit compatible \`a
l'extension des scalaires. C'est clairement le cas pour la plupart des \'equivalences
ad\'equates usuelles: par exemple \'equivalence rationnelle, alg\'ebrique, num\'erique ou
smash-nilpotente au sens de Voevodsky. Dans le cas de l'\'equivalence homologique, on est
amen\'e \`a faire un peu de th\'eorie:

\subsubsection{Une th\'eorie motivique de r\'ealisations} Nous aurons besoin d'un exemple tr\`es
sommaire de telle th\'eorie:

\begin{defn}\phantomsection
Pour $p=0$ ou un nombre premier, soit $\corps_p$ la cat\'egorie des corps de
caract\'eristique $p$. Une \emph{th\'eorie de coefficients} est un foncteur
\[F:\corps_p\to \corps_0.\]
\end{defn}

\begin{exs}\phantomsection\
\begin{thlist}
\item Un foncteur constant: $F(k) = F_0$ pour tout $k$, les morphismes \'etant \'egaux \`a
l'identit\'e.
\item $F(k) = k$ si $p=0$. 
\item $F(k)= Frac (W(k))$ si $p>0$.
\end{thlist}
\end{exs}

\`A une th\'eorie de coefficients $F$, on associe la th\'eorie motivique
\[R_F(k) = \Vec_{F(k)}^*\]
o\`u $\Vec_{F(k)}^*$ est la cat\'egorie des $F(k)$-espaces vectoriels gradu\'es de dimension
finie. Pour une extension $K/k$, le foncteur $\Vec_{F(k)}^*\to \Vec_{F(K)}^*$  est
donn\'e par l'extension des scalaires. (On consid\`ere les $\Vec^*$ comme des cat\'egories
rigides pour le produit tensoriel gradu\'e et la contrainte de commutativit\'e donn\'ee par la
r\`egle de Koszul.)

\subsubsection{Th\'eories motiviques de motifs homologiques}\label{s2.2.2} Les cohomologies de
Weil ``classiques" fournissent des exemples de morphismes $\Chow\to R_F$:

\begin{thlist}
\item Fixons un domaine universel $\Omega\in \corps_p$ et ``restreignons''-nous \`a la cat\'egorie des sous-corps de $\Omega$ 
(c'est-\`a-dire considérons la cat\'egorie $\corps/\Omega$). Pour $l\ne p$, on consid\`ere la r\'ealisation $l$-adique $X\mapsto H^*(X_{k_s},\Q_l)$, où $k_s$ est la clôture séparable de $k$ dans $\Omega$.
Ici, $F(k)=\Q_l$ pour tout $k$. L'invariance de la cohomologie \'etale par extension
alg\'ebriquement close fournit les isomorphismes naturels n\'ecessaires. On note cette réalisation $\rho_l$, et $\Mot_l$ la théorie de motifs homologiques associée.
\item Prenons $p=0$, et restreignons-nous \`a la cat\'egorie des sous-corps de $\C$. On peut alors prendre pour $H^*$ la
cohomologie de Betti. Ici, $F(k)=\Q$ pour tout $k$. On note cette réalisation $\rho_B$, et $\Mot_B$ la théorie de motifs purs homologiques associée.
\item Toujours en caract\'eristique z\'ero, on peut prendre pour $H^*$ la cohomologie de de
Rham. Ici, $F(k) = k$. On note cette réalisation $\rho_{dR}$ et $\Mot_{dR}$ la théorie de motifs purs homologiques associée.
\item En caract\'eristique $p>0$, on peut prendre pour $H^*$ la cohomologie cristalline. Ici,
$F(k)=Frac(W(k))$. On note cette réalisation $\rho_{cris}$. et $\Mot_{cris}$ la théorie de motifs purs homologiques associée.
\end{thlist}

Rappelons que si $p=0$, les isomorphismes de comparaison montrent que les théories $\Mot_l$, $\Mot_B$ et $\Mot_{dR}$ sont \emph{égales}. On les note simplement $\Mot_H$.

\subsubsection{Réalisations enrichies}\label{s2.2.3} Je ne traiterai que le cas que je connais bien: celui de la cohomologie $l$-adique, laissant d'autres exemples %(``modules de Hodge mixtes au point générique'', etc.) 
aux experts. Si $G$ est un groupoïde et $F$ un corps, on note $\Rep_F(G)$ (resp. $\Rep_F^*(G)$) la catégorie des foncteurs de $G$ vers $\Vec_F$ (resp. $\Vec_F^*$): ce sont les \emph{$F$-représentations} (de dimension finie, graduées) de $G$. Si $G$ et $F$ sont munis de topologies, la notation sous-entend qu'on ne considère que les représentations continues (c'est-à-dire continues sur les ensembles de morphismes).

\begin{defn}\phantomsection\label{d2.1} Soit $p=0$ ou un nombre premier, et soit $l\ne p$  un nombre premier. Pour $k\in \corps_p$, de type fini sur son sous-corps premier, on définit:
\[F_l(k) = \Rep_{\Q_l}^*(G_k)  \]
où $G_k$ est le groupoïde de Galois absolu de $k$.\footnote{Rappelons que la catégorie $G_k$ a pour objets les clôtures séparables de $k$ et pour morphismes les $k$-isomorphismes. %une $\Q_l$-représentation de $G_k$ est un foncteur ``continu'' de $G_k$ vers $\Vec^*_{\Q_l}$. 
Le choix d'une clôture séparable de $k$ identifie ces données aux représentations $l$-adiques habituelles, mais on perd alors la fonctorialité en $k$. Je remercie Joseph Ayoub d'avoir attiré mon attention sur ce point.} Ceci définit une théorie motivique sur la sous-catégorie pleine de $\corps_p$ formée des corps de type fini. On l'étend à tout $\corps_p$ en posant
\[F_l(k) = 2-\colim_{k'\subset k} F_l(k')\]
où $k'$ décrit les sous-corps de $k$ de type fini sur le corps premier. On obtient alors un morphisme de théories motiviques 
\[\tilde \rho_l:\Mot_l \to F_l: \quad X/k\mapsto (k_s\mapsto H^*(X_{k_s},\Q_l))\]
(réalisation enrichie). Le choix d'un domaine universel $\Omega$ fournit un ``morphisme d'oubli'' $\omega_\Omega:F_l\to R_{\Q_l}$ sur $\corps_p/\Omega$ tel que $\rho_l=\omega_\Omega \circ \tilde \rho_l$.
\end{defn}

\begin{rque}\phantomsection\label{r2.1} On a une sous-théorie motivique pleine de $F_l$:
\[F_l^{ss}(k) = \{V\in F_l(k)\mid V \text{ est semi-simple}\}\]
(pour $k$ de type fini; on l'étend ensuite par $2$-limite inductive). La conjecture de semi-simplicité de Grothendieck-Serre prédit que \emph{$\tilde R_l$ prend ses valeurs dans $F_l^{ss}$}. 
\end{rque}

\subsection{Correspondances finies} Les catégories de correspondances finies de Voevodsky \cite{voetri} définissent une théorie motivique $\Corf$.

\subsection{Th\'eories de pr\'efaisceaux et de faisceaux} Si $\sM$ est une th\'eorie motivique, 
$\hat\sM(k)=\Hom(\sM(k)^{op},\Ens)$ d\'efinit une nouvelle th\'e\-o\-rie motivique $\hat\sM$, 
en prenant  
\[\hat \sM(f)=\sM(f)_!\]
adjoint à gauche du foncteur  $\sM(f)^*: \hat \sM(K)\to \hat \sM(k)$ défini par
\[\sM(f)^*(X)(M)= X(\sM(f)(M))\]
pour $f:k\to K$ (lemme \ref{l3.4}). Le foncteur de Yoneda d\'efinit un morphisme de th\'eories motiviques 
$y_\sM:\sM\to \hat\sM$.

On peut aussi choisir $k\mapsto \Hom(\sM(k)^{op},\sC)$, o\`u $\sC$ est une cat\'egorie cocompl\`ete, 
mais on n'a plus de foncteur de Yoneda. Exemples de $\sC$: groupes ab\'eliens, ensembles 
simpliciaux\dots\ Pour ces exemples on a un foncteur canonique $\Ens\to \sC$, d'o\`u un morphisme 
$\hat\sM\to \Hom(\sM^{op},\sC)$.

\subsubsection{Le cas additif}\label{2.4.1} Si $\sM$ est une théorie motivique additive, on remplace $\Ens$ ci-dessus par $\Ab$ (catégorie des groupes abéliens), en se restreignant aux préfaisceaux additifs; on a alors le plongement de Yoneda additif \cite[\S 1.3]{nrsm}.

\subsubsection{Faisceaux} Si $\sM(k)$ est munie pour tout $k$ d'une topologie de Grothendieck et 
que $f^*:\sM(k)\to \sM(K)$ est continu pour tout $f$, on peut consid\'erer la sous-th\'eorie pleine 
$\tilde\sM\subset \hat \sM$ des faisceaux. De m\^eme \`a coefficients dans une cat\'egorie convenable.

\subsection{Th\'eories homotopiques}\label{s2.3} D'autres th\'eories motiviques munies de
morphismes de th\'eories motiviques sont
\[\Sm\to\sH\to \sS\sH^\eff\to \sS\sH\]
o\`u, pour tout corps $k$, $\sH(k)$ (resp. $\sS\sH(k)$) est la cat\'egorie
homotopique (resp. homotopique stable) des sch\'emas de Morel-Voevodsky et $\sS\sH^\eff(k)$ est
la cat\'egorie not\'ee $\sS\sH^{\A^1}(k)$ dans \cite[\S 3.2]{morel}. Rappelons \cite{mv} que $\sH(k)$ est 
une cat\'egorie homotopique associ\'ee \`a la cat\'egorie des faisceaux Nisnevich d'ensembles 
simpliciaux sur $\Sm(k)$, et que $\sS\sH^\eff(k)$ et $\sS\sH(k)$ s'en d\'eduisent par stabilisation \cite{vICM}.

\subsection{Th\'eories triangul\'ees}\label{s2.4} On dispose \'egalement des cat\'egories
triangul\'ees de motifs d\'efinies par Voevodsky: par exemple
\[\begin{CD}
\Sm @>>> \DM_\gm^\eff@>>>  \DM^\eff\\
&& @VVV @VVV\\
&& \DM_\gm@>>> \DM.
\end{CD}\]

\subsection{} Il existe divers morphismes entre les exemples \ref{s2.2}, \ref{s2.3} et
\ref{s2.4}, dont nous rappellons les principaux:
\begin{align*}
\Chow^\eff&\to \DM_\gm^\eff\leftarrow  \Corf\\
\sS\sH^\eff&\to \DM^\eff.
\end{align*}

\subsection{Sch\'emas en groupes commutatifs} Pour $k\in \corps$, soit $\Grp(k)$ la cat\'egorie
des $k$-sch\'emas en groupes commutatifs localement de type fini. C'est une th\'eorie motivique  additive. Elle contient des 
sous-th\'eories pleines importantes:
\begin{itemize}
\item la th\'eorie $\SAb$ des vari\'et\'es semi-ab\'eliennes (extensions d'une vari\'et\'e
ab\'elienne par un tore);
\item la th\'eorie $\Ab$ des vari\'et\'es ab\'eliennes;
\item la th\'eorie $\tore$ des tores;
\item la th\'eorie $\res$ des r\'eseaux (sch\'emas en groupes localement isomorphes \`a
$\Z^n$ pour la topologie \'etale).
\end{itemize}

\subsection{Localisation} Soit $\sM$ une th\'eorie motivique \`a valeurs dans
$\Cat$. Supposons donn\'e, pour tout $k\in \sK$, un ensemble de morphismes $S(k)\in
Fl(\sM(k))$, contenant les isomorphismes et tels que $f^* S(k)\subset S(K)$ pour tout
$f:k\to K$ de $\sK$. Les cat\'egories localis\'ees $S(k)^{-1}\sM(k)$ forment alors une nouvelle
th\'eorie motivique, not\'ee $S^{-1}\sM$. On a un morphisme de th\'eories motiviques
\[\sM\to S^{-1}\sM.\]

Cette construction poss\`ede les variantes suivantes:

\begin{comment}
\subsubsection{} Si $\sM(k)$ est une ``grosse'' cat\'egorie, la localisation relative aux $S(k)$
peut
\^etre insuffisante et on est amen\'e \`a la ``compl\'eter'' par un proc\'ed\'e galoisien \`a la
Bousfield:

\begin{enumerate}
\item On dit qu'un objet $M\in \sM(k)$ est \emph{$S(k)$-local} si, pour tout $s\in S(k)$,
$\sM(k)(s,M)$ est un isomorphisme.
\item On dit que $M\in \sM(k)$ est \emph{$S$-local} si, pour tout morphisme
$f:k\to K$ de $\sK$, $f^* M\in \sM(K)$ est $S(K)$-local.
\item On dit qu'un morphisme $\sigma\in \sM(k)$ est une \emph{$S$-\'equivalence} si, pour
tout $f:k\to K$ et tout objet $S$-local $M\in \sM(K)$,
$\sM(k)(f^*\sigma,M)$ est un isomorphisme. On note
$\hat S(k)$ l'ensemble des $S$-\'equivalences de $\sM(k)$.
\end{enumerate}

Comme $S(k)\subset \hat S(k)$ et $f^* \hat S(k)\subset \hat S(K)$, on a un
nouveau morphisme de th\'eories motiviques
\[S^{-1}\sM\to \hat S^{-1}\sM.\]

Remarquons que les objets $S$-locaux et $\hat S$-locaux co\"\i ncident et que (par
cons\'equent)
$\widehat{\hat S} = \hat S$.
\end{comment}

Si $\sM$ prend ses valeurs dans une $2$-cat\'egorie avec
structures sup\-pl\'e\-men\-tai\-res, on peut souhaiter obtenir des localisations conservant ces
structures suppl\'ementaires. Par exemple, si $\sM$ prend ses valeurs dans la $2$-cat\'egorie
des cat\'egories triangul\'ees, les cat\'egories $S(k)^{-1} \sM(k)$ ne sont pas en g\'en\'eral
triangul\'ees. Pour y rem\'edier, on peut utiliser la \emph{saturation triangul\'ee} de $S(k)$:
c'est le plus petit ensemble de morphismes $\langle S(k)\rangle$ de $\sM(k)$ contenant $S(k)$
et tel que la cat\'egorie
$\langle S(k)\rangle^{-1} \sM(k)$ soit triangul\'ee (cf. \cite[def. 4.2.1]{birat-tri}). Concr\`etement, on consid\`ere la
sous-cat\'egorie (strictement) \'epaisse $\sS$ de $\sM(k)$ engendr\'ee par les c\^ones des
\'el\'ements de $S(k)$, et on prend pour $\langle S(k)\rangle$ l'ensemble des morphismes dont le
c\^one appartient \`a $\sS$.

L'exemple principal ici est:

\subsubsection{Th\'eories birationnelles}\label{s.bir} Soit $\sM$ une th\'eorie
motivique \`a valeurs dans
$\Cat$, munie d'un morphisme de th\'eories motiviques $\phi:\Sm\to \sM$ ou $\phi:\Sm^\proj\to
\sM$. Notons $S_b$ l'ensemble des morphismes birationnels entre vari\'et\'es (projectives)
lisses. Le morphisme $\phi$ envoie $S_b$ sur des ensembles de morphismes que nous continuerons
\`a noter $S_b$. On obtient ainsi un carr\'e naturellement commutatif de th\'eories motiviques
\[\begin{CD}
\Sm@>>> \sM\\
@VVV @VVV\\
S_b^{-1}\Sm@>>> S_b^{-1}\sM%@>>> \hat S_b^{-1}\sM
\end{CD}
 \text{ ou }
\begin{CD}
\Sm^\proj@>>> \sM\\
@VVV @VVV\\
S_b^{-1}\Sm^\proj@>>> S_b^{-1}\sM%@>>> \hat S_b^{-1}\sM.
\end{CD}
\]

Si $\sM$ est une th\'eorie motivique de cat\'egories triangul\'ees, on remplacera $S_b$ par 
$\langle S_b\rangle$.

\section{G\'en\'eralit\'es sur les adjoints}

\begin{lemme}\phantomsection\label{l3.0} Soit $T:\sM\to \sN$ un foncteur; soient $\sM'$ une sous-cat\'egorie 
pleine de $\sM$ et  $\sN'$ une sous-catégorie pleine de $\sN$ telles que $T(\sM')\subset \sN'$. Supposons que $T$ admette un adjoint 
(\`a gauche ou \`a droite) $S$ et que $S(\sN')\subset \sM'$. Alors le foncteur $T':\sM'\to \sN'$ induit
par $T$ a pour adjoint (\`a gauche ou \`a droite) le foncteur $S':\sN'\to \sM'$ induit par $S$. De plus, le morphisme de changement de base correspondant est un isomorphisme.\qed
\end{lemme}

\begin{lemme}\phantomsection\label{l3.2} Soit $T:\sM\to \sN$ un foncteur additif entre cat\'egories additives. 
Alors\\
a) La sous-cat\'egorie pleine $\sN_T$ de $\sN$ form\'ee des objets $N$ tels que l'adjoint 
\`a gauche (resp. \`a droite) $S$ de $T$ soit d\'efini en $N$ est additive; de plus, $S$ est 
un foncteur additif de $\sN_T$ vers $\sM$.\\
b) Si $\sM$ est pseudo-ab\'elienne, $\sN_T$ est \'epaisse 
(stable par facteurs directs repr\'esentables dans $\sN$).\\
c) %Soit $\alpha$ un cardinal. 
Si $\sM$ est stable par sommes directes infinies %index\'ees par un ensemble de cardinal $\le \alpha$ 
et si $S$ est un adjoint \`a gauche, $\sN_T$ est 
%$\alpha$-localisante: elle est 
stable par sommes directes infinies repr\'esentables dans 
$\sN$% index\'ees par un ensemble de cardinal $\le \alpha$
. De plus,  
$S$ commute \`a ces sommes directes.
\end{lemme}

\begin{proof} a) Soient $M,N\in \sN$ tels que $S$ soit d\'efini en $M$ et $N$. Clairement, $S(M)\oplus S(N)$ 
repr\'esente $S(M\oplus N)$. Ceci montre que $\sN_T$ est additive et que $S$ est additif. 

b) Soit $N\in \sN_T$ 
et soit $P$ un facteur direct de $N$. Soit $e\in \End(N)$ l'idempotent d'image $P$. Alors $S(e)$ est un 
endomorphisme idempotent de $S(N)$: si $\sM$ est pseudo-ab\'elienne, il admet une image qui repr\'esente $S(P)$.

c) Soit $I$ un ensemble, et soit $(N_i)_{i\in I}$ une famille d'objets de $\sN_T$. 
Supposons que $\bigoplus N_i$ soit repr\'esentable dans $\sN$. Pour $M\in \sM$, on a
\[\sN(\bigoplus N_i,TM)=\prod \sN(N_i,TM)=\prod \sM(SN_i,M) = \sM(\bigoplus SN_i,M)\]
ce qui montre que $\bigoplus SN_i$ repr\'esente $S(\bigoplus N_i)$.
\end{proof}

\begin{lemme}[g\'en\'eralisation de \protect{\cite[lemma 5.3.6]{neeman}}]\phantomsection\label{l3.3}
Soit $T:\sM\to \sN$ un foncteur triangul\'e entre cat\'egories triangul\'ees. Alors la sous-cat\'egorie 
pleine $\sN_T$ de $\sN$ form\'ee des objets $N$ tels que l'adjoint \`a gauche (resp. \`a droite) $S$ de $T$ soit 
d\'efini en $N$ est triangul\'ee. De plus, $S$ est triangul\'e sur son domaine de d\'efinition.
\end{lemme}

\begin{proof} Que $\sN_T$ soit triangul\'ee se d\'emontre comme dans la preuve de 
\cite[lemma 5.3.6]{neeman}: si $N\in \sN$, on v\'erifie que $N[i]\in \sN$ pour tout $i\in\Z$, et si 
$f:M\to N$ est un morphisme de $\sN_T$ de cône $P$, on v\'erifie qu'un cône de $S(f)$ repr\'esente $S(P)$. 
La preuve montre que $S$ est triangul\'e. 
\end{proof}

\begin{lemme}[\protect{\cite[I.5.3]{SGA4}}]\phantomsection\label{l3.4} Soit $T:\sM\to \sN$ un foncteur. Alors le foncteur $T^*:\hat \sN\to \hat \sM$ ``composition avec $T$'' entre les 
cat\'egories de pr\'efaisceaux d'ensembles a un adjoint \`a gauche $T_!$. Si $T$ a un adjoint \`a gauche $S$, $T_!$ 
a pour adjoint \`a gauche $S_!$. Si $T$ a un adjoint \`a droite $U$, alors $U_!\simeq T^*$. M\^emes 
\'enonc\'es pour les pr\'efaisceaux \`a valeurs dans une cat\'egorie cocompl\`ete.\qed
\end{lemme}

Rappelons qu'une \emph{dualité} sur une catégorie $\sM$ est un foncteur $*:\sM\to \sM^{op}$ tel que $*^{op}$ soit adjoint (à droite, et donc à gauche) de $*$. Cette dualité est \emph{parfaite} si $*$ est une équivalence de catégories; cf. \cite[\S 1]{qss} dans le cas additif.

\begin{lemme}\phantomsection\label{l3.5} Soient $\sM,\sN$ deux cat\'egories munies de dualit\'es parfaites $^*$ , et soit $T:\sM\to \sN$ un foncteur 
commutant \`a ces dualit\'es. Soit $N\in\sN$. Alors l'adjoint \`a gauche $S$ de $T$ est d\'efini en $N$ si et 
seulement si l`adjoint \`a droite $U$ de $T$ est d\'efini en $N$. Plus pr\'ecis\'ement, on a la formule
\[U(N) = S(N^*)^*.\qed\]
\end{lemme}

\begin{prop}\phantomsection\label{p2bis} Soient $\sM,\sN$ deux cat\'egories mono\"\i dales sy\-m\'e\-tri\-ques rigides, et soit 
$T:\sM\to \sN$ un foncteur mono\"\i dal sym\'etrique. Supposons que $T$ admette un adjoint \`a gauche 
$S$. Alors, pour tout $(M,N)\in \sM\times \sN$, on a la
\emph{formule de projection}
\[S(N\otimes TM)\simeq SN\otimes M.\]
\end{prop}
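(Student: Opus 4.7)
The plan is to establish the projection formula by a standard Yoneda argument, exploiting rigidity on both sides together with the adjunction $S\dashv T$. Concretely, I would fix $X\in\sM$ and compute the set $\Hom_\sM(S(N\otimes TM),X)$ in two ways, showing it is naturally isomorphic to $\Hom_\sM(SN\otimes M,X)$; then an appeal to Yoneda gives the required isomorphism in $\sM$.

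First I would recall the (well-known) fact that any symmetric monoidal functor between rigid symmetric monoidal categories commutes with the formation of duals: for every $M\in\sM$ there is a canonical isomorphism $T(M^*)\iso T(M)^*$, compatible with the monoidal structure. This follows from the uniqueness of duals, using that $T$ sends the evaluation and coevaluation of $M$ to a pair of morphisms exhibiting $T(M^*)$ as a dual of $T(M)$. I would state this explicitly as a preliminary step, since every subsequent manipulation relies on it.

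Next, using adjunction and rigidity successively, I would write the chain of natural isomorphisms
\begin{align*}
\Hom_\sM\bigl(S(N\otimes TM),X\bigr)
&\simeq \Hom_\sN(N\otimes TM,TX)\\
&\simeq \Hom_\sN\bigl(N,TX\otimes (TM)^*\bigr)\\
&\simeq \Hom_\sN\bigl(N,TX\otimes T(M^*)\bigr)\\
&\simeq \Hom_\sN\bigl(N,T(X\otimes M^*)\bigr)\\
&\simeq \Hom_\sM(SN,X\otimes M^*)\\
&\simeq \Hom_\sM(SN\otimes M,X),
\end{align*}
where the first and fifth use the adjunction $S\dashv T$, the second and sixth use rigidity in $\sN$ and $\sM$ respectively, the third uses the preliminary step on duals, and the fourth uses that $T$ is monoidal. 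All isomorphisms are natural in $X$, so Yoneda yields a canonical isomorphism $S(N\otimes TM)\simeq SN\otimes M$ in $\sM$.

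The only mildly subtle point is the compatibility of $T$ with duals, so I would make sure that step is carefully justified (it is the only place where symmetric monoidality of $T$, as opposed to just laxness, is genuinely used). The rest is formal; no difficulty is expected beyond book-keeping of natural transformations, and no further assumption on $\sM$ or $\sN$ (such as additivity) is needed.
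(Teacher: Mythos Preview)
Your proof is correct and follows essentially the same route as the paper: a Yoneda argument chaining adjunction, rigidity, and the monoidality of $T$ (including preservation of duals) to identify $\Hom_\sM(S(N\otimes TM),X)$ with $\Hom_\sM(SN\otimes M,X)$. The only cosmetic differences are that the paper merges your third and fourth isomorphisms into one step and places the dual on the left rather than the right, which is immaterial in a symmetric monoidal category.
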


\begin{proof} En utilisant le fait que $T$ pr\'eserve le produit tensoriel et la
dualit\'e, on a des isomorphismes pour $P\in \sM$:
\begin{align*}
\sM(S(N\otimes TM),P)&\simeq \sN(N\otimes TM,TP)\\
&\simeq\sN(N,(TM)^*\otimes TP)\\
&\simeq \sN(N,T(M^*\otimes P))\\ 
&\simeq\sM(SN,M^*\otimes P)\\
&\simeq\sM(SN\otimes M,P).
\end{align*}

La conclusion en r\'esulte par Yoneda.
\end{proof}

\begin{prop}\phantomsection\label{p.2colim} a) Soit $(\sC_\alpha)_{\alpha\in A}$ un $2$-système inductif de catégories (c'est-à-dire un pseudo-foncteur $A\to \Cat$), de $2$-colimite $\sC$. Pour tout $\alpha$, soit $S_\alpha$ une classe de morphismes de $\sC_\alpha$, et supposons que tout morphisme $\alpha\to \beta$ de $A$ envoie $S_\alpha$ dans $S_\beta$. Soit $S=\bigcup_\alpha \IM(S_\alpha)\subset Fl(\sC)$. Alors on a une équivalence canonique de catégories
\[S^{-1} \sC \simeq 2-\colim S_\alpha^{-1} \sC_\alpha.\]
b) Avec les notations précédentes, soient $\sC_0$ une autre catégorie et, pour tout $\alpha$, $f_\alpha:\sC_0\to \sC_\alpha$ un foncteur. Soit $S_0$ une classe de morphismes de $\sC$. On suppose que les $f_\alpha$ envoient $S_0$ dans $S_\alpha$ et sont compatibles aux foncteurs de transition, d'où un foncteur induit $f:S_0^{-1}\sC_0\to S^{-1}\sC$. On suppose de plus que:
\begin{thlist}
\item Pour tout $\alpha$, $f_\alpha$ admet un adjoint à gauche qui envoie $S_\alpha$ dans $S_0$.
\item Pour tout morphisme $\alpha\to \beta$, le foncteur $S_\alpha^{-1}\sC_\alpha \to S_\beta^{-1}\sC_\beta$ est une équivalence de catégories.
\end{thlist}
Alors $f$ admet un adjoint à gauche.
\end{prop}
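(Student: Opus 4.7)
Pour (a), je v\'erifierais la propri\'et\'e universelle de la localisation et de la 2-colimite. Soit $\sD$ une cat\'egorie test; un foncteur $S^{-1}\sC\to \sD$ est un foncteur $\sC\to \sD$ envoyant $S$ sur des isomorphismes, ce qui, par la propri\'et\'e universelle de $\sC=2\text{-}\colim \sC_\alpha$, revient \`a la donn\'ee d'un syst\`eme pseudo-naturel de foncteurs $F_\alpha:\sC_\alpha\to \sD$. Puisque $S=\bigcup_\alpha \IM(S_\alpha)$, la condition sur $S$ se r\'eduit \`a ce que chaque $F_\alpha$ envoie $S_\alpha$ sur des isomorphismes, c'est-\`a-dire se factorise par $S_\alpha^{-1}\sC_\alpha$. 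Un tel syst\`eme \'equivaut \`a un foncteur $2\text{-}\colim S_\alpha^{-1}\sC_\alpha\to \sD$. La bijection \'etant pseudo-naturelle en $\sD$, on conclut par Yoneda bidimensionnel.

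Pour (b), l'hypoth\`ese (ii) combin\'ee avec (a) entra\^\i ne que chaque foncteur canonique $\iota_\alpha:S_\alpha^{-1}\sC_\alpha\to S^{-1}\sC$ est une \'equivalence, puisqu'il s'agit de la fl\`eche vers la 2-colimite d'un syst\`eme dont toutes les transitions sont des \'equivalences. Fixons alors $\alpha_0\in A$. L'hypoth\`ese (i) et la condition $f_{\alpha_0}(S_0)\subset S_{\alpha_0}$ assurent que $f_{\alpha_0}$ et son adjoint \`a gauche $g_{\alpha_0}$ passent aux localis\'es, donnant des foncteurs $\bar f_{\alpha_0}$ et $\bar g_{\alpha_0}$; l'unit\'e et la co-unit\'e de l'adjonction $g_{\alpha_0}\dashv f_{\alpha_0}$ descendent automatiquement, fournissant une adjonction $\bar g_{\alpha_0}\dashv \bar f_{\alpha_0}$.

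Comme $f$ est obtenu par factorisation universelle \`a travers la 2-colimite, on a un isomorphisme naturel $f\simeq \iota_{\alpha_0}\circ \bar f_{\alpha_0}$. Choisissant un quasi-inverse $\iota_{\alpha_0}^{-1}$ de l'\'equivalence $\iota_{\alpha_0}$, le foncteur $g:=\bar g_{\alpha_0}\circ \iota_{\alpha_0}^{-1}$ est adjoint \`a gauche de $f$, comme le montre la cha\^\i ne d'isomorphismes naturels
\[\Hom(gX,A)\simeq \Hom(\iota_{\alpha_0}^{-1}X,\bar f_{\alpha_0}A)\simeq \Hom(X,\iota_{\alpha_0}\bar f_{\alpha_0}A)\simeq \Hom(X,fA).\]

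L'obstacle principal r\'eside dans la v\'erification minutieuse de (a): il faut manipuler coh\'eremment les propri\'et\'es universelles bidimensionnelles de la 2-colimite et les $2$-isomorphismes structurels du pseudo-foncteur, pour s'assurer que la correspondance des foncteurs se rel\`eve effectivement en une \'equivalence de cat\'egories (et non pas seulement en une bijection au niveau des objets). Une fois (a) acquise, (b) en r\'esulte sans grande difficult\'e, par r\'eduction au cas d'une seule adjonction gr\^ace \`a l'hypoth\`ese (ii).
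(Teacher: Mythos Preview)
Your approach is essentially the same as the paper's: the proof there consists of the two sentences ``les deux membres de l'\'equivalence ont chacun la propri\'et\'e 2-universelle de l'autre'' for (a) and ``r\'esulte imm\'ediatement de a)'' for (b), and you have simply unpacked these. Your argument for (b) --- reducing via (a) and hypothesis (ii) to a single adjunction at a chosen index $\alpha_0$ --- is exactly the intended mechanism; note only that the step ``$\iota_{\alpha_0}$ is an equivalence because all transitions are equivalences'' tacitly uses that $A$ is connected (which holds in all the applications, where $A$ is filtered), a point the paper leaves equally implicit.
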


\begin{proof} a) Les deux membres de l'équivalence ont chacun la propriété 2-universelle de l'autre. b) résulte immédiatement de a).
\end{proof}

Dans le dernier énoncé, nous nous cantonnons au cadre des théories motiviques pour éviter de perdre le lecteur dans une hypergénéralité inutile.

\begin{prop}\phantomsection\label{p3.1} Dans la situation du \S \ref{1.6},  supposons $\phi_K$ plein et $\phi_k$ essentiellement surjectif. Alors, pour tout $M\in \sM(K)$, le morphisme de changement de base $w_f^\phi(M): f_\sharp^\sN\phi_K (M)\to \phi_k f_\sharp^\sM (M)$ de \eqref{eq1.1} est un monomorphisme scindé.
\end{prop}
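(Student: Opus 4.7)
The plan is to show that $w_f^\phi(M)$ admits a retraction via a Yoneda-type argument. It suffices to prove that, for every $Y \in \sN(k)$ of the form $Y = \phi_k(X)$ with $X \in \sM(k)$, the precomposition map
\[\sN(k)(w_f^\phi(M), Y) : \sN(k)(\phi_k f_\sharp^\sM(M), Y) \to \sN(k)(f_\sharp^\sN \phi_K(M), Y)\]
is surjective. Granting this, essential surjectivity of $\phi_k$ lets me pick an isomorphism $\alpha : \phi_k(X) \iso f_\sharp^\sN \phi_K(M)$, surjectivity applied to $\alpha^{-1}$ yields $h' : \phi_k f_\sharp^\sM(M) \to \phi_k(X)$ with $h' \circ w_f^\phi(M) = \alpha^{-1}$, and $r := \alpha \circ h'$ is then the desired retraction of $w_f^\phi(M)$.

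To establish the surjectivity for $Y = \phi_k(X)$, I would chase a given $g : f_\sharp^\sN \phi_K(M) \to \phi_k(X)$ through three steps. First, the adjunction $(f_\sharp^\sN \dashv f^*_\sN)$ makes $g$ correspond to a morphism $\tilde g : \phi_K(M) \to f^*_\sN \phi_k(X)$; second, the $2$-isomorphism $v_f^\phi$ identifies the target with $\phi_K f^*_\sM(X)$, so $\tilde g$ reads as a morphism $\phi_K(M) \to \phi_K f^*_\sM(X)$; third, fullness of $\phi_K$ lifts it to some $u : M \to f^*_\sM(X)$ in $\sM(K)$, and the adjunction $(f_\sharp^\sM \dashv f^*_\sM)$ then yields $h : f_\sharp^\sM(M) \to X$ in $\sM(k)$ corresponding to $u$. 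The claim to be proved is that $\phi_k(h) \circ w_f^\phi(M) = g$.

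The main obstacle is verifying this last equality, but it is purely a naturality computation. By the explicit formula for $w_f^\phi$ recalled in \S\ref{1.6}, the image of $w_f^\phi(M)$ under $(f_\sharp^\sN \dashv f^*_\sN)$ is the composite $v_f^\phi(f_\sharp^\sM(M)) \circ \phi_K(\eta_\sM(M))$. Post-composing with $f^*_\sN \phi_k(h)$, naturality of $v_f^\phi$ applied to $h$ converts this into $\phi_K(f^*_\sM(h)) \circ \phi_K(\eta_\sM(M))$ followed by $v_f^\phi(X)$; the triangle identity for the adjunction $(f_\sharp^\sM \dashv f^*_\sM)$ then collapses the first factor into $\phi_K(u) = \tilde g$, which by construction is the image of $g$ under $(f_\sharp^\sN \dashv f^*_\sN)$. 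Once this bookkeeping is done, the argument closes as described, and the retraction $r$ produced above exhibits $w_f^\phi(M)$ as a split monomorphism.
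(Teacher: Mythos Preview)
Your proof is correct and follows essentially the same route as the paper's: both establish surjectivity of $\sN(k)(w_f^\phi(M),\phi_k(N))$ by combining the adjunctions with fullness of $\phi_K$, then invoke essential surjectivity of $\phi_k$ to produce the retraction. The paper packages the key step into a commutative square (with $\phi_K$ as the surjective right vertical), whereas you unwind the same square element by element; the extra ``bookkeeping'' naturality check you perform is precisely the commutativity of that square.
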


\begin{proof} Soit $N\in \sM(k)$. \'Evaluant $w_f^\phi(M)$ sur $\phi_k(M)$, on obtient:
\[ \sN(k)(\phi_k f_\sharp^\sM (M),\phi_k(N))\by{w_f^\phi(M)^*}\sN(k)(f_\sharp^\sN\phi_K (M),\phi_k(N)).
\]

Par adjonction et commutation des $f^*$ aux $\phi$, le second membre se récrit:
\[\sN(K)(\phi_K (M),f^*_\sN\phi_k(N))\simeq \sN(K)(\phi_K (M),\phi_Kf^*_\sM(N)).\]

On en déduit un diagramme commutatif:
\[\begin{CD}
\sM(k)(f_\sharp^\sM (M),N)@>\sim>>\sM(K)(M,f^*_\sM(N)) \\
@V\phi_k VV @V``\phi_K" VV \\
\sN(k)(\phi_k f_\sharp^\sM (M),\phi_k(N))@>{w_f^\phi(M)^*}>>\sN(k)(f_\sharp^\sN\phi_K (M),\phi_k(N))
\end{CD}\]
où la flèche horizontale supérieure est l'isomorphisme d'adjonction et la flèche verticale de droite est surjective par hypothèse. Il en résulte que la flèche horizontale inférieure est \emph{surjective}. Par hypothèse sur $\phi_k$, on peut choisir $N$ tel que $\phi_k(N)\simeq f_\sharp^\sN\phi_K (M)$. En relevant cet isomorphisme, on obtient la rétraction cherchée. 
\end{proof}

\begin{comment}
\begin{proof} Soit $N\in \sM(k)$. \'Evaluant $w_f^\phi(M)$ sur $\phi_k(M)$, on obtient:
\begin{equation}\label{eq3.1} \sN(k)(\phi_k f_\sharp^\sM (M),\phi_k(N))\by{w_f^\phi(M)^*}\sN(k)(f_\sharp^\sN\phi_K (M),\phi_k(N)).
\end{equation}

Par adjonction et commutation des $f^*$ aux $\phi$, le second membre se récrit:
\[\sN(K)(\phi_K (M),f^*_\sN\phi_k(N))\simeq \sN(K)(\phi_K (M),\phi_Kf^*_\sM(N)).\]

La proposition 1.1.9 de \cite{ayoub} montre que la composition correspondante $\theta:\sN(k)(\phi_k f_\sharp^\sM (M),\phi_k(N))\to \sN(K)(\phi_K (M),\phi_Kf^*_\sM(N))$ est égale à la composition
\begin{multline*}
\sN(k)(\phi_k f_\sharp^\sM (M),\phi_k(N))\by{f_\sN^*}\sN(K)(f_\sN^*\phi_k f_\sharp^\sM (M),f_\sN^*\phi_k(N)) \\ \by{v_f(f_\sharp(M))^*}\sN(K)(\phi_K f_\sM^* f_\sharp^\sM (M),f_\sN^*\phi_k(N)) \\
 \by{\phi_K(\eta(M))^*\circ v_f(M)_*^{-1}}\sN(K)(\phi_K  (M),\phi_Kf^*_\sM(N)).
\end{multline*}
On en déduit un diagramme commutatif:
\[\begin{CD}
\sM(k)(f_\sharp^\sM (M),N)@>\sim>>\sM(K)(M,f^*_\sM(N)) \\
@V\phi_k VV @V\phi_K VV \\
\sN(k)(\phi_k f_\sharp^\sM (M),\phi_k(N))@>{\theta}>>\sN(K)(\phi_K (M),\phi_Kf^*_\sM(N))
\end{CD}\]
où la flèche horizontale supérieure est l'isomorphisme d'adjonction et la flèche verticale de droite est surjective par hypothèse. Il en résulte que la flèche horizontale inférieure, et donc \eqref{eq3.1}, est \emph{surjective}. Par hypothèse sur $\phi_k$, on peut choisir $N$ tel que $\phi_k(N)\simeq f_\sharp^\sN\phi_K (M)$. En relevant cet isomorphisme, on obtient la rétraction cherchée. 
\end{proof}
\end{comment}

\section{Cas d'une extension finie}\label{s4}

\subsection{Cas d'une extension finie s\'eparable}

Soit $f:k\to K$ une extension finie s\'eparable.

\begin{thm}\phantomsection\label{t4.1}  $f_\sharp$ existe pour les 
th\'eories motiviques suivantes: $\Sm^\proj$, $\Sm$, $\Cor_\sim(-,A)$, $\Corf$, $\Mot_\sim^\eff(-,A)$,
 $\Mot_\sim(-,A)$, $\sH$, $\sS\sH^\eff$, $\sS\sH$, $\DM_\gm^\eff$, $\DM_\gm$, $\DM_-^\eff$, $\DM^\eff,\DM$. 
De plus, les changements de base \eqref{eq1.1} 
relatifs aux divers morphismes entre ces th\'eories motiviques sont des isomorphismes.
\end{thm}

\begin{proof} Dans le cas de $\Sm$ et $\Sm^\proj$, $f_\sharp$ est donn\'e par la restriction ``na\"ive'' 
des scalaires. Dans le cas de $\Cor_\sim(-,A)$, la formule donnant les Hom montre que $f_\sharp h(X)$ 
existe et est donn\'e par $h(f_\sharp X)$ pour $X\in \Sm^\proj(K)$ (remarquer que, pour 
$Y\in \Sm^\proj(k)$, $X\times_K Y_K = f_\sharp X\times_k Y$). Même raisonnement pour $\Corf$. D'apr\`es le lemme \ref{l3.2}, 
cela d\'efinit $f_\sharp$ sur $\Mot_\sim^\eff(-,A)$ tout entier. De m\^eme, dans le cas de $\Mot_\sim(-,A)$, 
la meilleure mani\`ere de prouver l'existence de $f_\sharp$ est de consid\'erer $\Mot_\sim(-,A)$ comme 
l'enveloppe karoubienne de la cat\'egorie des correspondances gradu\'ees, pour laquelle le raisonnement 
est le m\^eme que pour $\Cor_\sim(-,A)$. Ces constructions montrent que les morphismes de changement de 
base sont des isomorphismes.

Dans le cas de $\DM_\gm^\eff$, $\DM_\gm$, $\DM^\eff$, $\DM$, $\sS\sH^\eff$ et $\sS\sH$, on a bien s\^ur une fonctorialit\'e bien plus g\'en\'erale \cite{ayoub,cis-deg}\footnote{La notation $f_\sharp$ est inspir\'ee de \cite{ayoub}.}; donnons n\'eanmoins une d\'emonstration \'el\'ementaire. La sous-cat\'egorie 
pleine form\'ee des $M(X)$ pour $X\in\Sm$ est dense; de plus, les cat\'egories consid\'er\'ees v\'erifient 
les hypoth\`eses du lemme \ref{l3.2} b) et c). En appliquant ce lemme et le lemme \ref{l3.3}, on voit qu'il
suffit de prouver que $M(f_\sharp X)$ repr\'esente $f_\sharp M(X)$ dans $\DM(k)$. De plus, en appliquant le lemme \ref{l3.0}, il 
suffit de travailler dans $\DM$ pour attraper toutes ses sous-th\'eories pleines.

Soient donc $X\in \Sm(K)$ et $M\in \DM(k)$. Par d\'efinition, $M$ est donn\'e par un $\Z(1)$-spectre 
$(M_n,f_n)_{n\in\Z}$ o\`u $M_n\in \DM^\eff(k)$ et $f_n:M_n(1)\to M_{n+1}$; de plus
\[\DM(K)(M(X),M_K)=\colim_{n\ge 0} \DM^\eff(K)(M(X)(n),(M_n)_K).\] 

Pour montrer que $M(f_\sharp X)$ repr\'esente $f_\sharp M(X)$ dans $\DM(k)$, on est donc ramen\'e \`a 
d\'emontrer cette assertion dans $\DM^\eff(k)$. Pour $M\in \DM^\eff(k)$,
\begin{multline*}
\DM^\eff(K)(M(X),M_K)=H^0_\Nis(X,M_K)\\
=H^0_\Nis(X,M)=DM^\eff(k)(M(f_\sharp X),M)
\end{multline*} 
puisque $M_K$ est simplement la restriction de $M$ \`a $\Sm(K)_\Nis$ vu comme sous-site de $\Sm(k)_\Nis$ 
et que la cohomologie de Nisnevich de $X$ est in\-d\'e\-pen\-dan\-te de la base.

Pour $\sS\sH^\eff$ et $\sS\sH$, on proc\`ede de m\^eme. Pour $\sH$, l'idéal serait de d\'evelopper une notion abstraite de cat\'egorie homotopique instable et de g\'en\'eraliser le lemme \ref{l3.3} \`a ce cadre. \`A défaut, on peut raisonner directement et montrer que le foncteur $f_\sharp:\Sm(K)\to \Sm(k)$ induit bien un foncteur $f_\sharp:\sH(K)\to \sH(k)$, adjoint à gauche de $f^*$.
\end{proof}

\begin{thm}\phantomsection\label{t4.2}  $f_*$ existe pour les 
th\'eories motiviques suivantes: $\Sm^\proj$, $\Sm$, $\Sm^\proj_\bullet$, $\Sm_\bullet$, $\Grp$, $\SAb$, 
$\Ab$, $\tore$, $\res$%, $\SAb^\tau$
. De plus, les changements de base \eqref{eq1.1} relatifs aux divers 
morphismes entre ces th\'eories motiviques sont des isomorphismes.
\end{thm}

\begin{proof} Pour $\Sm$ et $\Sm^\proj$, $f_*$ est donn\'e (par définition!) par la restriction des scalaires \`a la Weil. 
Ceci fonctionne encore pour $\Sm_\bullet$, $\Sm^\proj_\bullet$ et $\Grp$, puisque ce foncteur commute 
aux produits (comme adjoint \`a droite). 
De plus, il est clair que $f_*$ pr\'eserve les sous-cat\'egories pleines $\SAb$, $\Ab$, $\tore$ et 
$\res$%; la condition (i) du lemme \ref{l2.1} montre que $f_*$ pr\'eserve aussi $\SAb^\tau$
. On peut donc appliquer le lemme \ref{l3.0} \`a toutes ces th\'eories motiviques. 
\end{proof}

\begin{thm}\phantomsection \label{t4.3} a) $f_*$ existe pour $\Cor_\sim(-,A)$, $\Corf$, $\Mot_\sim^\eff(-,A)$,
\allowbreak $\Mot_\sim(-,A)$, $\sS\sH^\eff$, $\sS\sH$, $\DM_\gm^\eff$, $\DM_\gm$, $\DM_-^\eff$, $\DM^\eff,\DM$; il co\"\i ncide avec $f_\sharp$.\\
b) $f_\sharp$ existe pour les th\'eories $\Grp$, $\SAb$, 
$\Ab$, $\tore$, $\res$%, $\SAb^\tau$
; il co\"\i ncide avec $f_*$. Les changements de base \eqref{eq1.1} entre ces diverses th\'eories sont des isomorphismes.
\end{thm}

\begin{proof} a)  Comme $\Mot_\sim(-,A)$ est munie d'une dualit\'e, on applique le lemme \ref{l3.5}; on obtient, pour $X$ projective lisse de dimension $d$ et $n\in \Z$: 
\[f_*(h(X)(n))=(f_\sharp h(X)(-d)(-n))^*= (h(f_\sharp X)(-d)(-n))^*=h(f_\sharp X)(n)\]
puisque $\dim f_\sharp X=\dim X$. En r\'eappliquant le lemme \ref{l3.0} ceci montre que $f_*$ pr\'eserve $\Mot_\sim^\eff$ et $\Cor_\sim$, et prend la m\^eme valeur que $f_\sharp$ sur les objets. De m\^eme, si $\phi:h(X)(m)\to h(Y)(n)$ est un morphisme, on a
\[f_*(\phi) = {}^t(f_\sharp({}^t\phi))=f_\sharp(\phi)\]
par un calcul direct.

Pour $\Corf$, on procède aussi par calcul direct, en notant qu'un fermé intègre $Z$ de $X\times_k Y_{(k)}=X_K\times_K Y$, pour $X\in \Corf(k)$ et $Y\in \Corf(K)$, est fini et surjectif sur $X$ si et seulement s'il l'est sur $X_K$. Ceci permet d'attraper d'abord $\DM_\gm^\eff$ et $\DM_\gm$, y compris l'isomorphisme $f_\sharp\simeq f_*$.

Dans le cas de $\sS\sH^\eff,\sS\sH,\DM^\eff$ et $\DM$, le théorème de représentabilité de Brown à la Neeman implique l'existence de $f_*$.   Pour obtenir l'isomorphisme $f_\sharp\simeq f_*$ sur ces grosses catégories, %(à l'exception de $\sH$), 
il suffit alors de vérifier que $f_*$ commute aux sommes directes infinies, ce qui résulte formellement du fait que $f^*$ préserve les objects compacts.

b) Toutes ces cat\'egories sont des sous-cat\'egories pleines de celle des faisceaux de groupes ab\'eliens pour la topologie \'etale; l'\'enonc\'e r\'esulte alors de \cite[Ch. V, Lemma 1.12]{milne2}. La derni\`ere affirmation r\'esulte du lemme \ref{l3.0}.
\end{proof}

\begin{rque} Le cas de $\sH$ n'est pas traité ci-dessus par manque de référence. Une variante (à dégager) du théorème de representabilité de Brown devrait le donner de la même façon.%: $\sH$ est en tout cas engendrée par un ensemble d'objets compacts, à savoir l'image de $\Sm_\bullet$.
\end{rque}

\subsection{Extensions radicielles} Ici, la situation est bien plus simple:

\begin{prop}\phantomsection\label{l3.1} Soit $K/k$ une extension radicielle de corps de caractéristique $p>0$. Alors, pour toute $\Z[1/p]$-algèbre commutative  $A$, le foncteur d'extension des scalaires
\[\Corf(k)\otimes A\to \Corf(K)\otimes A\]
est une \'equivalence de cat\'egories. Ceci s'étend aux théories $\DM_\gm^\eff$, $\DM_\gm$, $\DM^\eff$ et $\DM$.
\end{prop}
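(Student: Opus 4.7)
La strat�gie se d�cline en trois temps: r�duction au cas fini, traitement du cas fini par descente de Frobenius, puis extension aux cat�gories triangul�es $\DM^\eff,\DM,\DM_\gm^\eff,\DM_\gm$.

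\emph{R�duction au cas fini.} Toute extension radicielle $K/k$ est $2$-colimite filtrante de ses sous-extensions finies $K'/k$ (ce sont exactement les sous-corps de $K$ de type fini sur $k$). Comme $\Sm$ et $\Corf$ commutent aux $2$-colimites filtrantes de corps, on a $\Corf(K)\otimes A=2-\colim_{K'} \Corf(K')\otimes A$. Il suffit donc de traiter le cas o� $[K:k]=p^e<\infty$.

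\emph{Cas fini: essentielle surjectivit�.} Soit $Y\in \Sm(K)$. Comme $K^{p^e}\subset k$, le Frobenius relatif it�r� $F_{Y/K}^e:Y\to Y^{(p^e)}$ est un morphisme de $K$-sch�mas dont le but est canoniquement de la forme $Y'_K$ avec $Y'=Y\otimes_{K,F^e} k$ un $k$-sch�ma, lisse sur $k$ par descente fid�lement plate. La fl�che $F_{Y/K}^e$ est un hom�omorphisme universel fini purement ins�parable, de degr� $p^{e\dim Y}$; elle d�finit donc dans $\Corf(K)$ une correspondance dont le compos� avec la transpos�e $(F_{Y/K}^e)^*$ vaut $p^{e\dim Y}$ dans les deux sens. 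Apr�s tensorisation par $A$ (o� $p$ est inversible), $F_{Y/K}^e$ devient un isomorphisme $Y\iso Y'_K$ dans $\Corf(K)\otimes A$, ce qui donne l'essentielle surjectivit�.

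\emph{Cas fini: pleine fid�lit�.} Soient $X_1,X_2\in\Sm(k)$. Le morphisme canonique $\pi:(X_1\times_k X_2)_K\to X_1\times_k X_2$ est un hom�omorphisme universel fini plat (car $K/k$ est finie purement ins�parable). Un sous-sch�ma ferm� int�gre $Z\subset (X_1\times_k X_2)_K$ est fini et surjectif sur $X_{1,K}$ si et seulement si son image $\pi(Z)$ est finie et surjective sur $X_1$, puisque $\pi|_Z:Z\to \pi(Z)$ est un hom�omorphisme universel. On en d�duit des morphismes $\pi^*$ et $\pi_*$ entre les groupes $\Corf(k)(X_1,X_2)$ et $\Corf(K)(X_{1,K},X_{2,K})$, et la relation $\pi_*\pi^*=[K:k]=p^e$, tandis que, par le m�me argument que ci-dessus, $\pi^*\pi_*$ vaut une puissance de $p$ sur chaque g�n�rateur $[Z]$. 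Apr�s tensorisation par $A$, ces fl�ches deviennent inverses l'une de l'autre, montrant que le foncteur d'extension des scalaires induit un isomorphisme sur les Hom.

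\emph{Extension aux cat�gories triangul�es.} Les cat�gories $\DM^\eff$ et $\DM_\gm^\eff$ sont construites \`a partir de $\Corf$ par passage aux faisceaux Nisnevich, aux complexes et localisation par $\A^1$. Une �quivalence de sites additifs $(\Corf(k)\otimes A,\Nis,\A^1)\iso (\Corf(K)\otimes A,\Nis,\A^1)$ s'ensuit alors: en effet, la topologie Nisnevich et la relation d'$\A^1$-homotopie se comportent bien par l'hom�omorphisme universel $\pi$ (le foncteur de changement de base envoie recouvrements sur recouvrements et vice-versa apr�s inversion de $p$, et pr�serve les objets $\A^1$-locaux). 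Par fonctorialit� de la construction de Voevodsky, on en d�duit les �quivalences pour $\DM^\eff$ et $\DM_\gm^\eff$, puis pour les versions stabilis�es $\DM$ et $\DM_\gm$ par fonctorialit� de la stabilisation par $\Z(1)[2]$, qui commute au changement de base.

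\emph{Principale difficult�.} La v�rification d�taill�e du passage au cas triangul�, en particulier le contr�le du foncteur d�riv� sur les complexes non born�s (pour $\DM^\eff$), demande un peu de soin: il faut s'assurer que l'�quivalence obtenue au niveau additif induit bien une �quivalence des cat�gories de faisceaux avec transferts Nisnevich apr�s localisation $\A^1$. L'argument cl� reste que $\pi$ est un hom�omorphisme universel, et les propri�t�s cohomologiques se transf�rent intactes apr�s inversion de $p$.
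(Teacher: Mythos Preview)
Your argument is correct and follows the same approach as the paper, which simply defers to \cite[prop.~1.7.2]{birat-pure} for $\Corf$ and then propagates through the chain of constructions. Your Frobenius descent for essential surjectivity and the push--pull argument for full faithfulness (where in fact $\pi^*\pi_*=p^e$ on the nose, not merely a varying $p$-power) are exactly what that reference does.

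One point where the paper is slightly more careful than you: for the small categories $\DM_\gm^\eff$, $\DM_\gm$ it records the precise chain $K^b(\Corf)\to$ quotient by (Mayer--Vietoris $+$ $\A^1$) $\to$ pseudo-ab\'elienne $\to$ invert Tate, each step being $2$-functorial in the input additive category; and for the big categories $\DM^\eff$, $\DM$ it passes first through $\PST$, noting that an equivalence on $\Corf\otimes A$ induces one on additive presheaves (cf.\ \cite[lemma 6.5.1]{birat}), then to Nisnevich sheaves, etc. Your formulation ``une \'equivalence de sites additifs\dots\ s'ensuit'' is right in spirit but would benefit from this explicit route, especially for the unbounded categories where the passage from generators to the whole triangulated category needs the presheaf step rather than a direct appeal to ``$\pi$ est un hom\'eomorphisme universel''.
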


\begin{proof} Dans le cas de $\Corf$, la même que celle de \cite[prop. 1.7.2]{birat-pure}. On passe de là à $K^b(\Corf)$, puis à son quotient par les relations (Mayer-Vietoris + $\A^1$-invariance), puis à l'enveloppe pseudo-abélienne, puis en inversant l'objet de Tate.

Pour les grosses catégories, on commence par traiter le cas des préfaisceaux avec transferts $\PST$ (\ref{2.4.1}): notons que le cas de $\Corf$ implique celui de $\PST$ (cf. \cite[lemma 6.5.1]{birat}). On passe ensuite aux faisceaux Nisnevich avec transferts, etc.
\end{proof}

\begin{rque}\phantomsection Ce résultat a été annoncé par Suslin en 2007 dans un exposé au Fields Institute de Toronto. Il est paru récemment; la démonstration est essentiellement la même \cite{suslin}.
\end{rque}

\subsection{Cas g\'en\'eral}

\begin{thm}\phantomsection\label{t3.1} Soit  $K/k$ une extension finie, et soit $A$ une $\Z[1/p]$-algèbre commutative, où $p$ est l'exposant caractéristique de $k$. Alors le foncteur
\[\sM(k)\to \sM(K)\]
admet un adjoint \`a gauche et un adjoint \`a droite qui sont canoniquement isomorphes, pour les théories $\sM$ suivantes: $\Mot_\sim(-,A)$, $\Mot_\sim^\eff(-,A)$, $\Corf\otimes A$, $\DM_\gm^\eff\otimes A$, $\DM_\gm\otimes A$, $\DM^\eff(-,A)$ et $\DM(-,A)$. Les morphismes de changement de base entre ces diverses théories sont des isomorphismes.
\end{thm}

\begin{proof} Dans le cas des motifs purs, on empile \cite[prop. 1.7.2]{birat-pure} sur les théorèmes \ref{t4.1} et \ref{t4.3}. De même pour les autres théories, en utilisant la proposition \ref{l3.1}.
\end{proof}

\subsection{Application: un théorème d'effectivité}

\begin{thm}\phantomsection\label{t2} Soit $(A,\sim)$ un couple adéquat, où $A$ est une $\Q$-algèbre et où $\sim$ est l'équivalence rationnelle, algébrique, smash-nilpotente, homologique (pour une cohomologie de Weil classique) ou numérique, et soit $f:k\to K$ une extension. Soit
$M\in\Mot_\sim(k,A)$. Si $f^*M\in \Mot_\sim(K,A)$ est effectif, alors $M$ est effectif.
\end{thm}

\begin{proof} Sauf dans le cas de l'équivalence numérique, tout ce que la démonstration utilise est que $\sim$ se spécialise (cf. \cite[th. 6.8.3]{zetaL}). On se ramène d'abord à $K/k$ de type fini, puis aux deux cas essentiels:
\begin{thlist}
\item $K/k$ finie;
\item $K=k(t)$.
\end{thlist}

Dans le cas (i), la proposition \ref{p2bis} montre que  $M$ est facteur direct de $f_\sharp f^*M(=M\otimes f_\sharp \un)$, qui est effectif d'après le théorème \ref{t3.1}.

Dans le cas (ii), supposons d'abord que $\sim\ne \num$. Comme $f^*\tilde M$ provient de $k$, il a bonne réduction partout, par exemple en  $t=0$. En y spécialisant $f^*\tilde M$, on obtient un motif effectif, qui est évidemment $\tilde M$.

Il n'est pas clair que les cycles modulo l'équivalence numérique se spécialisent; pour traiter $\sim=\num$, choisissons une théorie motivique associée à une cohomologie de Weil classique $H$ (par exemple la cohomologie $l$-adique pour $l\ne \car k$). Comme le noyau de $\Mot_H(k)\to \Mot_\num(k)$ est localement nilpotent \cite[prop. 5]{aknote},  $M$ se rel\`eve en $\tilde M\in \Mot_H(k)$, et de plus $M$ effectif $\iff$ $\tilde M$ effectif. En appliquant cette remarque sur $k(t)$, on se ramène à montrer que $f^*\tilde M$ effectif $\Rightarrow$ $\tilde M$ effectif, ce qui vient d'être fait.
\end{proof}

\section{Motifs num\'eriques}\label{s5}

\subsection{Adjoints dans les cat\'egories semi-simples}

Soit $F$ un corps commutatif et soit $\sA$ une cat\'egorie $F$-lin\'eaire semi-simple
\cite[2.1.1, 2.1.2]{nrsm}. On suppose $\sA$ \emph{pseudo-ab\'elienne} (donc ab\'elienne, loc.
cit.).

\begin{defn}\phantomsection Soit $\Phi:\sA\to \Vec_F$ un foncteur $F$-lin\'eaire de $\sA$ vers la cat\'egorie
des $F$-espaces vectoriels (un $\sA$-module \`a gauche au sens de \cite[1.3.3]{nrsm}). Le
\emph{support} de
$\Phi$ est
\[\Supp(\Phi)=\{[S]\mid S \text{ simple}, \Phi(S)\ne 0\}
\]
o\`u $[M]$ d\'esigne la classe d'isomorphisme d'un objet $M\in \sA$.
\end{defn}

\begin{lemme}\phantomsection\label{l1} Les conditions suivantes sont \'equivalentes:
\begin{thlist}
\item $\Phi$ est corepr\'esentable;
\item $\Supp(\Phi)$ est fini et, pour tout $S$ simple, $\dim_F \Phi(S)<\infty$.
\end{thlist}
\end{lemme}

\begin{proof} (i) $\Rightarrow$ (ii): si $\Phi(M)=\sA(N,M)$ pour un $N\in \sA$, alors pour $S$
simple, on a $\Phi(S)\ne 0$ si et seulement si $S$ est facteur direct de $N$ (lemme de Schur).
La seconde condition est claire (de nouveau, lemme de Schur).

(ii) $\Rightarrow$ (i): pour tout $S$ simple, $\Phi(S)$ est un $\End_\sA(S)$-module \`a gauche:
soit $n_S$ sa dimension (qui est suppos\'ee finie). Posons 
\[N=\bigoplus_{[T]} T^{n_T}\]
o\`u on a choisi, pour chaque classe d'isomorphisme $[T]$ d'objets simples, un repr\'esentant
$T\in [T]$. Par construction, on a pour tout $S$ simple un isomorphisme de
$\End_\sA(S)$-modules:
\[\Phi(S)\simeq \sA(N,S).\]

Comme $\sA$ est semi-simple, ces isomorphismes d\'efinissent un isomorphisme de foncteurs
$\Phi\simeq \sA(N,-)$.
\end{proof}

\begin{prop}\phantomsection\label{p1} Soit $F$ un corps, et soit $T:\sA\to \sB$ un foncteur $F$-lin\'eaire
pleinement fid\`ele entre deux
$F$-cat\'egories ab\'eliennes semi-simples.\\
a) $T$ admet un adjoint \`a gauche
$T_\sharp$ et un adjoint \`a droite $T_*$.\\
b) Pour tout $S\in \sB$ simple, on a
\[T_\sharp S=
\begin{cases}
0 &\text{si $S$ n'est pas de la forme $T(S')$, $S'\in \sA$;}\\
$S'$ &\text{si $S$ est de la forme $T(S')$, $S'\in \sA$.}
\end{cases}
\]
En particulier, $T_\sharp(S)$ est nul ou simple. De m\^eme pour $T_*(S)$.\\
c) Le morphisme canonique de foncteurs
\[T_*\Rightarrow T_\sharp\]
(d\'eduit par pleine fid\'elit\'e de $T$ de la composition 
\[TT_*\Rightarrow Id_\sB \Rightarrow TT_\sharp\]
de la co\"unit\'e de $T_*$ et de l'unit\'e de $T_!$) est un isomorphisme.
\end{prop}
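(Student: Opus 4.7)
The plan is to construct $T_\sharp$ and $T_*$ explicitly on simple objects and extend by additivity, simultaneously proving (a) and (b); assertion (c) then follows by direct comparison on simples.

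First, I would observe that $T$ preserves simple objects: if $S'\in\sA$ is simple, then $\End_\sA(S')$ is a division $F$-algebra by Schur, and by full faithfulness $\End_\sB(T S')\cong \End_\sA(S')$ remains a division ring. In a semisimple abelian category this forces $T S'$ to be simple, since any nontrivial direct summand of $T S'$ would yield a nontrivial idempotent in $\End_\sB(T S')$, contradicting the absence of zero divisors. Moreover, full faithfulness embeds the set of isomorphism classes of simples of $\sA$ into that of $\sB$.

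For $T_\sharp$, fix a simple $S\in\sB$ and consider the $F$-linear functor $\Phi_S:\sA\to\Vec_F$, $M\mapsto\sB(S,T M)$. Since every $M\in\sA$ is a finite direct sum of simples, Schur's lemma shows $\Phi_S(M)=0$ unless some simple summand $S'$ of $M$ satisfies $T S'\cong S$. If no such $S'$ exists, then $\Phi_S\equiv 0$, corepresented by $0$. Otherwise, such $S'$ is unique up to isomorphism by full faithfulness, and full faithfulness yields a natural isomorphism $\Phi_S(M)\cong \sA(S',M)$, so $\Phi_S$ is corepresented by $S'$. This matches the formula in (b). Extending by finite additivity defines $T_\sharp$ on all of $\sB$; functoriality is automatic from the matrix description of morphisms between finite direct sums of simples, with off-diagonal entries vanishing by Schur and diagonal entries lifting uniquely via full faithfulness. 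The construction of $T_*$ is entirely parallel, starting from $M\mapsto \sB(T M,S)$, and produces the same object as $T_\sharp$ on every simple $S$.

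For (c), additivity reduces the claim to a simple $S\in\sB$. If $S\not\cong T S'$ for any simple $S'\in\sA$, then $T_* S=T_\sharp S=0$ and the comparison is the obvious isomorphism $0\to 0$. Otherwise $S\cong T S'$ with $S'$ simple: the unit $S'\iso T_* T S'$ and counit $T_\sharp T S'\iso S'$, which are isomorphisms precisely because $T$ is fully faithful, show via the zig-zag identities that the counit $T T_* S\to S$ and the unit $S\to T T_\sharp S$ are both isomorphisms. Hence so is the composition $T T_* S\to S\to T T_\sharp S$, and therefore also the corresponding morphism $T_* S\to T_\sharp S$ obtained from full faithfulness of $T$. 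The only substantive point is the preservation of simplicity by $T$; everything else is bookkeeping around Schur's lemma and decomposition into simples, and I do not anticipate a serious obstacle.
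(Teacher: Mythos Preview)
Your proposal is correct and follows essentially the same approach as the paper: reduce to simple $S\in\sB$, use that $T$ preserves simples (via Schur and full faithfulness), and do the two-case analysis for (c). The only cosmetic difference is that the paper packages the corepresentability argument through a preliminary lemma (Lemme~\ref{l1}) giving a support-finiteness criterion, whereas you identify the corepresenting object directly via the isomorphism $\sB(TS',TM)\cong\sA(S',M)$ coming from full faithfulness; your route is arguably more direct.
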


\begin{proof} a) Traitons le cas d'un adjoint \`a gauche (celui d'un adjoint \`a droite est
dual). Soit $N\in \sB$: il faut voir que le foncteur
\[\Phi(M)=\sB(N,T(M))\]
est repr\'esentable. On peut \'evidemment supposer $N$ \emph{simple}.

Appliquons le crit\`ere du lemme \ref{l1}: on doit v\'erifier que, pour tout $S\in \sA$ simple,
$\dim_F \Phi(S)$ est finie et nulle pour presque tout $S$. Or, comme $T$ est pleinement
fid\`ele, $T(S)$ est simple (lemme de Schur), donc $\Phi(S)\ne 0$ $\iff$ $T(S)\simeq N$, et dans
ce cas on a $\Phi(S)\simeq \End_\sB(N)$ qui est \'evidemment de dimension finie.

b) Cela r\'esulte de la formule donnant $T_\sharp S$ ou $T_* S$, cf. preuve du lemme \ref{l1}.

c) Il suffit de montrer que la
composition
\[TT_*S\to S\to TT_\sharp S\]
est un isomorphisme pour tout $S\in \sB$ simple. Or d'apr\`es b), les deux termes extr\^emes
sont nuls si $S$ n'est pas d\'efini sur $\sA$, et les deux fl\`eches sont des isomorphismes si
$S$ est d\'efini sur $\sA$.
\end{proof}

\begin{rque}\phantomsection Cet argument s'étend au cas où $T$ est seulement plein: $T(S)$ est alors simple ou nul. Je laisse au lecteur le soin de décrire $T_\sharp$ et $T_*$ explicitement dans ce cas.
\end{rque}

\subsection{Un r\'esultat de pleine fid\'elit\'e}

\begin{prop}\phantomsection\label{p5} Soit $(A,\sim)$ un couple ad\'equat, o\`u $A\supset \Q$ et $\sim$ est 
moins fine que l'\'equivalence alg\'ebrique. Alors, pour toute extension
\emph{primaire}
$f:k\to K$ (extension radicielle d'une extension r\'eguli\`ere), les foncteurs d'extension des
scalaires
\begin{align*}
f^*:\Cor_\sim(k,A)&\to \Cor_\sim(K,A)\\
f^*:\Mot_\sim^\eff(k,A)&\to \Mot_\sim^\eff(K,A)\\
f^*:\Mot_\sim(k,A)&\to \Mot_\sim(K,A)
\end{align*}
sont pleinement fid\`eles.
\end{prop}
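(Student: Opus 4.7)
Le plan est de se ramener \`a montrer, pour toute $k$-vari\'et\'e projective lisse $Z$, que $A^\sim_*(Z)\otimes A \to A^\sim_*(Z_K)\otimes A$ est bijective; la pleine fid\'elit\'e sur $\Mot_\sim^\eff$ et $\Mot_\sim$ s'en d\'eduit formellement par enveloppe pseudo-ab\'elienne puis inversion du twist de Tate. En \'ecrivant $K$ comme colimite filtrante de ses sous-extensions primaires de type fini, on se ram\`ene au cas o\`u $K/k$ est de type fini, puis on factorise $k\subset K_0 \subset K$ avec $K_0/k$ r\'eguli\`ere de type fini et $K/K_0$ finie radicielle. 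Le pas radiciel est trait\'e comme dans la proposition~\ref{l3.1}: pour $K/K_0$ purement ins\'eparable, le changement de base sur les cycles est bijectif apr\`es inversion de la caract\'eristique, ce qu'assure $A\supset \Q$.

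Pour le pas r\'egulier, on \'ecrit $K_0 = k(S)$ avec $S/k$ lisse et g\'eom\'etriquement int\`egre (possible car $K_0/k$ est r\'eguli\`ere). On construit une inverse candidate $\tr\colon A^\sim_*(Z_{K_0})\otimes A \to A^\sim_*(Z)\otimes A$ comme suit: \'etant donn\'e $\alpha$, on la rel\`eve en $\tilde\alpha\in A^\sim_*(Z\times U)\otimes A$ sur un ouvert dense $U\subset S$, on choisit un point ferm\'e $s_0\in U$ de corps r\'esiduel $\kappa/k$ fini s\'eparable, et on pose
\[\tr(\alpha):=\frac{1}{[\kappa:k]}\Tr_{\kappa/k}\bigl(i_{s_0}^*\tilde\alpha\bigr),\]
o\`u $i_{s_0}^*$ est l'image inverse r\'eguli\`ere (bien d\'efinie sur $A^\sim$ puisque $s_0$ est lisse) et $\Tr_{\kappa/k}$ l'image directe propre. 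L'identit\'e $\tr\circ f^* = 1$ est imm\'ediate: en prenant le rel\`evement canonique $p_1^*\alpha$ sur $Z\times S$, on a $i_{s_0}^*(p_1^*\alpha) = \alpha_\kappa$ et $\Tr_{\kappa/k}(\alpha_\kappa) = [\kappa:k]\alpha$. Pour l'injectivit\'e correspondante de $f^*$: si $f^*\alpha' = 0$, alors sur un ouvert dense $U\subset S$ le cycle $p_1^*\alpha'|_{Z\times U}$ est $\sim$-\'equivalent \`a un cycle support\'e dans $Z\times V$ pour un certain ferm\'e $V\subsetneq U$; en choisissant $s_0\in U\setminus V$ on obtient $\alpha'_\kappa\sim 0$, d'o\`u $[\kappa:k]\alpha' = \Tr_{\kappa/k}(\alpha'_\kappa)\sim 0$, et $\alpha' = 0$ puisque $A\supset \Q$.

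Le c\oe ur de la preuve est l'identit\'e $f^*\circ\tr = 1$. On se place au-dessus de $K_0$: sur $Z_{K_0}\times_{K_0} S_{K_0}$, la base $S_{K_0}$ est lisse et g\'eom\'etriquement int\`egre sur $K_0$ (car $K_0/k$ est r\'eguli\`ere), et y figurent deux sous-sch\'emas ferm\'es finis sur $\Spec K_0$: le $K_0$-point diagonal $\eta^{\Delta}\colon \Spec K_0 \to S_{K_0}$ provenant de $K_0 = k(S)$, et le changement de base $s_0\otimes_k K_0$, de degr\'e total $[\kappa:k]$. La restriction de $\tilde\alpha_{K_0}$ en $\eta^{\Delta}$ redonne $\alpha$, tandis que sa restriction en $s_0\otimes_k K_0$, pouss\'ee vers $Z_{K_0}$ par compatibilit\'e de la trace au changement de base, donne $[\kappa:k]\cdot\tr(\alpha)_{K_0}$. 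Comme $S_{K_0}$ est connexe et lisse, les deux restrictions sont alg\'ebriquement \'equivalentes dans $A^{\alg}(Z_{K_0})\otimes A$ (apr\`es extension des scalaires \`a chaque composante $L_i$ de $\kappa\otimes_k K_0$), donc \'egales modulo $\sim$ par hypoth\`ese; en divisant par $[\kappa:k]$ on conclut. La principale difficult\'e r\'eside dans cette derni\`ere \'etape: il faut agencer soigneusement les deux sous-sch\'emas sur $S_{K_0}$, appliquer l'argument classique de famille de cycles dans un espace de param\`etres connexe et lisse, et suivre les facteurs de trace \`a travers les diff\'erents changements de base.
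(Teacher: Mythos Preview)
Votre approche est correcte dans l'esprit mais diff\`ere nettement de celle de l'article, et votre argument d'injectivit\'e contient une lacune.

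\textbf{Comparaison.} L'article proc\`ede par descente galoisienne~: apr\`es r\'eduction au cas r\'egulier, il passe aux cl\^otures s\'eparables $k_s\subset K_s$, utilise le fait (d\^u \`a Kleiman) que $\sZ^d_\sim(W_{k_s},A)\iso \sZ^d_\sim(W_{K_s},A)$ pour $\sim$ moins fine que l'\'equivalence alg\'ebrique et toute extension de corps s\'eparablement clos, puis identifie les invariants sous $G_k$ et $G_K$ via la surjection $G_K\surj G_k$ (cons\'equence de la r\'egularit\'e). Cela donne injectivit\'e et surjectivit\'e d'un seul coup, en trois lignes. Votre argument de sp\'ecialisation est plus \'el\'ementaire en ce qu'il \'evite le passage \`a la cl\^oture s\'eparable; votre preuve de $f^*\circ\tr=1$ red\'emontre en fait la moiti\'e ``surjectivit\'e'' du r\'esultat de Kleiman dans un cadre non alg\'ebriquement clos, via l'astuce de trace. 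C'est correct et instructif.

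\textbf{La lacune.} Dans votre argument d'injectivit\'e, vous affirmez que si $\alpha'_{K_0}\sim 0$ dans $A^\sim(Z_{K_0})$, alors $p_1^*\alpha'|_{Z\times U}$ est $\sim$-\'equivalent \`a un cycle support\'e dans $Z\times V$ pour un ferm\'e propre $V$. Cette affirmation est un \'enonc\'e de type ``localisation'' pour $A^\sim$ qui n'est pas justifi\'e~: elle est vraie pour $\sim=\rat$ (suite de localisation des groupes de Chow) et pour $\sim=\alg$ (on \'etale le t\'emoin de l'\'equivalence alg\'ebrique sur un ouvert de $S$), mais pour $\sim$ strictement plus grossi\`ere --- par exemple $\sim=\num$ --- il n'y a aucune raison qu'une relation purement num\'erique sur la fibre g\'en\'erique provienne d'une relation $\sim$ sur l'espace total modulo un ferm\'e vertical. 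La bonne fa\c con de r\'eparer~: toute relation ad\'equate admet des homomorphismes de sp\'ecialisation (c'est une cons\'equence de la compatibilit\'e aux correspondances, via le graphe du morphisme de sp\'ecialisation, cf.\ Fulton, \S20.3). On obtient alors directement $\alpha'_\kappa = \sigma_{s_0}(\alpha'_{K_0}) \sim 0$ dans $A^\sim(Z_\kappa)$, d'o\`u $[\kappa:k]\alpha' = \Tr_{\kappa/k}(\alpha'_\kappa)\sim 0$ comme vous le concluez. Avec cette correction, votre preuve est compl\`ete.
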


\begin{proof} Il suffit de la faire pour le premier foncteur. Pour cela, on doit 
montrer que, pour deux $k$-vari\'et\'es projectives lisses $X,Y$, le morphisme
\[
\sZ^d_\sim(X\times Y,A)\to \sZ^d_\sim((X\times Y)_K,A)
\]
est bijectif, o\`u $d=\dim Y$. La proposition 1.7.2 de \cite{birat-pure} nous ram\`ene au cas o\`u $K/k$ 
est r\'eguli\`ere. Soit $K_s$ une cl\^oture s\'eparable de $K$, $k_s$ la fermeture 
s\'eparable de $k$ dans $K_s$, $G_K=Gal(K_s/K)$ et $G_k=Gal(k_s/k)$. On a un diagramme commutatif
\[\begin{CD}
\sZ^d_\sim((X\times Y)_K,A)@>\sim>> \sZ^d_\sim((X\times Y)_{K_s},A)^{G_K}\\
@AAA @AAA\\
\sZ^d_\sim(X\times Y,A)@>\sim>> \sZ^d_\sim((X\times Y)_{k_s},A)^{G_k}
\end{CD}\]
o\`u les fl\`eches horizontales sont des isomorphismes. De plus, il est bien connu que la fl\`eche
\[\sZ^d_\sim((X\times Y)_{k_s},A)\to \sZ^d_\sim((X\times Y)_{K_s},A)\]
est  un  isomorphisme, puisque $\sim$ est moins fine que l'\'equivalence alg\'ebrique \cite[p. 448]{kleiman}. Comme $K/k$ 
est r\'eguli\`ere, $G_K\to G_k$ est surjectif, donc la fl\`eche verticale de droite du diagramme 
est bijective, ainsi que celle de gauche.
\end{proof}

\subsection{Motifs num\'eriques: existence de l'adjoint} \`A partir de maintenant, $\sim=\num$ (\'equivalence num\'erique) et  $A=\Q$; on l'omet des notations.

\begin{thm}\phantomsection\label{t1} Pour toute
extension primaire
$f:k\to K$, le foncteur $f^*:\Mot_\num(k)\allowbreak\to \Mot_\num(K)$
admet un adjoint \`a gauche $f_\sharp$ et un adjoint \`a droite $f_*$; ces adjoints
sont ca\-no\-ni\-que\-ment isomorphes.\\
Le m\^eme \'enonc\'e vaut en rempla\c cant $\Mot_\num$ par $\Mot_\num^\eff$ (motifs
effectifs).
\end{thm}

\begin{proof} R\'esulte du th\'eor\`eme de semi-simplicit\'e de Jannsen
\cite{jannsen} et des propositions \ref{p1} et \ref{p5}.
\end{proof}

\subsection{Compatibilit\'e aux twists}

\begin{prop}\phantomsection\label{p2} a) Pour tout $(M,N)\in \Mot_\num(k)\times \Mot_\num(K)$, on a la
formule de projection
\[f_\sharp(N\otimes f^*M)\simeq f_\sharp N\otimes M.\]
b) Pour tout $M\in \Mot_\num(K)$ et pour tout $n\in \Z$, on a un isomorphisme
\[f_\sharp(M(n))\simeq (f_\sharp M)(n).\]
\end{prop}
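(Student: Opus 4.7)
Pour a), le plan est d'appliquer directement la proposition \ref{p2bis}. Il suffit donc de v\'erifier ses hypoth\`eses dans notre situation. La cat\'egorie $\Mot_\num(k)$ (resp. $\Mot_\num(K)$) est mono\"\i dale sym\'etrique rigide: la rigidit\'e provient de la dualit\'e de Poincar\'e sur les motifs purs, et la sym\'etrie du produit tensoriel est standard. Le foncteur $f^*$ est mono\"\i dal sym\'etrique puisqu'il commute aux produits tensoriels et aux duaux, l'extension des scalaires \'etant compatible au produit des vari\'et\'es et aux correspondances alg\'ebriques. L'existence d'un adjoint \`a gauche $f_\sharp$ est exactement le th\'eor\`eme \ref{t1} (puisque $K/k$ est primaire). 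Les hypoth\`eses de la proposition \ref{p2bis} sont donc satisfaites, ce qui donne imm\'ediatement la formule de projection.

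Pour b), on sp\'ecialise a) au cas $M=\un(n)$. Comme $f^*$ est mono\"\i dal unitaire, on a $f^*\un(n)\simeq \un(n)$ dans $\Mot_\num(K)$. Appliquant a), on obtient
\[f_\sharp(M(n))= f_\sharp(M\otimes f^*\un(n))\simeq f_\sharp M\otimes \un(n)=(f_\sharp M)(n),\]
ce qui est la formule cherch\'ee.

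Il n'y a donc pas de v\'eritable difficult\'e: l'essentiel du travail a \'et\'e fait dans la proposition \ref{p2bis} et le th\'eor\`eme \ref{t1}. Le seul point \`a v\'erifier soigneusement est la compatibilit\'e de $f^*$ \`a la dualit\'e rigide (et non seulement au produit tensoriel), mais cela d\'ecoule du fait que l'extension des scalaires sur les motifs purs envoie le dual d'un motif effectif $h(X)$ sur le dual de $h(X_K)$ via la formule explicite en termes du graphe diagonal, qui est pr\'eserv\'ee par $f^*$.
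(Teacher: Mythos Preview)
Your proof is correct and follows exactly the paper's approach: part a) is a direct application of Proposition~\ref{p2bis} (once the monoidal rigidity of $\Mot_\num$ and the existence of $f_\sharp$ from Theorem~\ref{t1} are noted), and part b) is obtained by specializing the formula to the Lefschetz/Tate twist $\un(n)=\bL^{\otimes n}$. Your write-up simply spells out the verification of the hypotheses a bit more than the paper does.
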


\begin{proof} a) Cela r\'esulte de la proposition \ref{p2bis}. b) C'est le cas particulier 
$N=\bL^{\otimes n}$ o\`u $\bL$ est le motif de Lefschetz.
\end{proof}

\subsection{Compatibilit\'e aux poids} Pour \'enoncer le r\'esultat, nous avons besoin de nous
restreindre \`a une sous-th\'eorie pleine convenable de $\Mot_\num$ (conjecturalement \'egale
\`a $\Mot_\num$):

Pour une des th\'eories motiviques homologiques $\hom$ de \ref{s2.2.2}, notons comme dans
\cite[rem. 1]{aknote} $\Mot_\num^*(k)$ la sous-cat\'egorie pleine de $\Mot_\num(k)$ form\'e des
objets dont un relev\'e dans $\Mot_\hom(k)$ a tous ses projecteurs de K\"unneth 
alg\'ebriques. D'apr\`es \cite[prop. 6]{aknote}, $\Mot_\num^*(k)$ ne d\'epend pas du choix de
la th\'eorie homologique de \ref{s2.2.2}. (La preuve de cette proposition, incompl\`ete en
caract\'eristique
$>0$  dans \cite{aknote}, est compl\'et\'ee dans \cite[app. B]{nrsm}.) Les projecteurs de
K\"unneth d\'efinissent une $\Z$-graduation sur le foncteur identique de $\Mot_\num^*$. Il est
clair que l'extension des scalaires respecte $\Mot_\num^*$ et la graduation ci-dessus.

\begin{prop}\phantomsection\label{p5.1} Soit $f:k\to K$ une extension primaire. Alors $f_\sharp\Mot_\num^*(K)\subset
\Mot_\num^*(k)$, et $f_\sharp$ respecte les $\Z$-graduations.
\end{prop}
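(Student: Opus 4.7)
Le plan est de se ramener au cas d'un objet simple par additivit\'e (lemme \ref{l3.2}) et semi-simplicit\'e de $\Mot_\num(K)$ (Jannsen). Pour $M\in \Mot_\num^*(K)$ simple, la proposition \ref{p1}(b) donne soit $f_\sharp M=0$ (cas trivial, $0$ \'etant de poids arbitraire), soit $f_\sharp M = N$ avec $N\in \Mot_\num(k)$ simple tel que $f^*N\simeq M$. On est ainsi ramen\'e \`a d\'emontrer l'implication suivante: si $N\in \Mot_\num(k)$ v\'erifie $f^*N\in \Mot_\num^*(K)$, alors $N\in \Mot_\num^*(k)$ et $N$ a m\^eme poids que $f^*N$.

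Pour \'etablir cette implication, on proc\'edera par descente des projecteurs de K\"unneth \`a l'aide de la pleine fid\'elit\'e sur les motifs homologiques. On commence par relever $N$ en $\tilde N\in \Mot_\hom(k)$, ce qui est possible car le noyau de $\Mot_\hom\to \Mot_\num$ est localement nilpotent et les idempotents s'y rel\`event. Alors $f^*\tilde N$ est un rel\`evement de $f^*N$ dans $\Mot_\hom(K)$. La proposition \ref{p5} appliqu\'ee \`a $\sim = \hom$ (moins fine que l'\'equivalence alg\'ebrique) fournit l'isomorphisme
\[\End_{\Mot_\hom(k)}(\tilde N) \iso \End_{\Mot_\hom(K)}(f^*\tilde N).\]
L'hypoth\`ese $f^*N\in \Mot_\num^*(K)$ signifie que les projecteurs de K\"unneth $\pi_i$ de $f^*\tilde N$ sont alg\'ebriques, c'est-\`a-dire qu'ils proviennent d'\'el\'ements du membre de droite; par l'isomorphisme ci-dessus, ils se rel\`event de mani\`ere unique en des endomorphismes $\pi'_i\in \End_{\Mot_\hom(k)}(\tilde N)$, n\'ecessairement idempotents et de somme $\mathrm{Id}$.

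Le point d\'elicat---l'obstacle principal du plan---est de v\'erifier que les $\pi'_i$ ainsi obtenus sont bien les projecteurs de K\"unneth de $\tilde N$. On fixera pour cela une cohomologie de Weil classique $H$ (par exemple la r\'ealisation $l$-adique de \S\ref{s2.2.2}) et on exploitera sa compatibilit\'e \`a l'extension des scalaires: pour $X\in \Sm^\proj(k)$, la cohomologie g\'eom\'etrique de $X_K$ s'identifie \`a celle de $X$, compatiblement \`a la d\'ecomposition de K\"unneth. Sous cette identification, $H(\pi_i)$ est la projection sur la composante de degr\'e $i$ de $H(f^*\tilde N)=H(\tilde N)$, ce qui force $H(\pi'_i)$ \`a \^etre le $i$-\`eme projecteur de K\"unneth de $H(\tilde N)$. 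Ainsi $\tilde N$ a des projecteurs de K\"unneth alg\'ebriques, d'o\`u $N\in \Mot_\num^*(k)$; et comme $f^*\pi'_i=\pi_i$, l'unique $i$ tel que $\pi'_i=\mathrm{Id}$ co\"\i ncide avec l'unique $i$ tel que $\pi_i=\mathrm{Id}$, ce qui donne la compatibilit\'e de $f_\sharp$ \`a la $\Z$-graduation.
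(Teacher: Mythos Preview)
Your proof is correct and follows essentially the same strategy as the paper: reduce to a simple $M$, apply proposition \ref{p1}(b) to write $M\simeq f^*N$ with $N$ simple, lift $N$ to $\tilde N\in\Mot_\hom(k)$, and use the compatibility of the Weil cohomology with extension of scalars.

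The only difference is a minor detour on your side. You invoke proposition \ref{p5} for $\sim=\hom$ to descend the K\"unneth projectors of $f^*\tilde N$ to $\tilde N$ via full faithfulness, and then check that the descended idempotents are the K\"unneth projectors of $\tilde N$. The paper bypasses this: since $M$ is simple in $\Mot_\num^*(K)$, it is pure of a single weight $w$, so any lift of $M$---in particular $f^*\tilde N$---has $H^i=0$ for $i\ne w$; the formula $H^*(f^*\tilde N)\simeq H^*(\tilde N)\otimes_{F(k)}F(K)$ then gives directly $H^i(\tilde N)=0$ for $i\ne w$, whence the K\"unneth projectors of $\tilde N$ are trivially algebraic (all but one are zero). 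Your route works fine, but the appeal to proposition \ref{p5} is unnecessary once one observes that a simple object of $\Mot_\num^*$ has cohomology concentrated in a single degree.
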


(Comme $f^*$ est pleinement fid\`ele, $f_\sharp f^*\simeq Id$ et l'inclusion est une
\'egalit\'e.)

\begin{proof} Il est \'equivalent de montrer que, si $M\in \Mot_\num(K)$ est pur de poids $w$,
alors $f_\sharp M$ est pur de poids $w$. On peut supposer $M$ simple. Si $f_\sharp M =0$, il n'y
a rien \`a d\'emontrer. Sinon, d'apr\`es la preuve de la proposition \ref{p1}, $M\simeq f^* S$
avec $S$ simple, et $f_\sharp M\simeq S$. Soit $\tilde S\in \Mot_\hom(k)$ un objet s'envoyant
sur $S$. Alors $f^*\tilde S\in \Mot_\hom(K)$ s'envoie sur $M$, et
\[H^*(f^*\tilde S)\simeq H^*(\tilde S)\otimes_{F(k)} F(K).\]

Ceci montre que $H^i(\tilde S)=0$ pour $i\ne w$, donc, par d\'efinition, que $S = f_\sharp M$
est pur de poids $w$.
\end{proof}

\begin{ex}\phantomsection\label{ex5.1} Via le foncteur $A\mapsto h_1(A)$, les adjoints $f_\sharp$ et $f_*$ \'eten\-dent 
respectivement la $K/k$-image et la $K/k$-trace de Chow sur les vari\'et\'es 
ab\'eliennes, \`a isog\'enie
pr\`es. Ceci r\'esulte du lemme \ref{l3.0} et du fait que 
ce foncteur de $\Ab\otimes\Q$ vers $\Mot_\num$ est pleinement fid\`ele. L'isomorphisme 
entre $f_*$ et $f_\sharp$ redonne l'isog\'enie entre la $K/k$-trace et la $K/k$-image.
\end{ex}

\subsection{Effectivit\'e; motifs nu\-m\'e\-ri\-ques birationnels}

\begin{thm}\phantomsection Le morphisme de changement de base \eqref{eq1.1} associé au morphisme de théories motiviques $\Mot_\num^\eff\to \Mot_\num$ est un isomorphisme 
pour toute extension primaire $K/k$. 
\end{thm}

\begin{proof} Soit $S\in \Mot_\num^\eff(K)$, simple. Si $S$ n'est pas d\'efini sur $k$ en tant
que motif non n\'ecessairement effectif, il ne l'est a fortiori pas en tant que motif effectif,
et $f_* S=f_\sharp S=0$ dans les deux cat\'egories. Si $S$ est d\'efini sur $k$ par $S_0$ dans
$\Mot_\num$, alors $S_0\in\Mot_\num^\eff$ d'apr\`es le th\'eor\`eme \ref{t2}, donc $f_*
S=f_\sharp S=S_0$ dans $\Mot_\num^\eff(k)$ et dans $\Mot_\num(k)$.
\end{proof}

\begin{cor}\phantomsection\label{c1} La proposition \ref{p2} reste valable en rempla\c cant la cat\'egorie $\Mot_\num$ par
$\Mot_\num^\eff$ (dans b), prendre $n\ge 0$).\qed
\end{cor}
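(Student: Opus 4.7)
Le plan est de d\'eduire le corollaire de la proposition \ref{p2} (appliqu\'ee dans $\Mot_\num$) et du th\'eor\`eme qui pr\'ec\`ede (isomorphisme du morphisme de changement de base associ\'e \`a l'inclusion $\iota:\Mot_\num^\eff\to \Mot_\num$). Rappelons que $\iota$ est pleinement fid\`ele, tensoriel, et que le th\'eor\`eme pr\'ec\'edent fournit un isomorphisme naturel $\iota f_\sharp^\eff\iso f_\sharp \iota$ pour toute extension primaire $f:k\to K$.

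Pour a), soient $M\in\Mot_\num^\eff(k)$ et $N\in\Mot_\num^\eff(K)$. Le th\'eor\`eme pr\'ec\'edent donne $\iota(f_\sharp^\eff N\otimes M)\simeq f_\sharp(\iota N)\otimes \iota M$, tandis que la proposition \ref{p2} a) (appliqu\'ee dans $\Mot_\num$) donne $f_\sharp(\iota N)\otimes \iota M\simeq f_\sharp(\iota N\otimes f^*\iota M)\simeq f_\sharp \iota(N\otimes f^*M)\simeq \iota f_\sharp^\eff(N\otimes f^*M)$, en utilisant que $\iota$ commute \`a $f^*$ et au produit tensoriel. La pleine fid\'elit\'e de $\iota$ permet de descendre l'isomorphisme obtenu dans $\Mot_\num$ en un isomorphisme dans $\Mot_\num^\eff$.

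Pour b), avec $n\ge 0$, on a $M(n)=M\otimes \bL^{\otimes n}$ o\`u $\bL$ est le motif de Lefschetz, effectif et d\'efini sur $k$ au sens que $\bL_K=f^*\bL_k$. Le cas particulier $M\leadsto \bL^{\otimes n}$ (vu comme objet du facteur $\Mot_\num^\eff(k)$) et $N\leadsto M$ dans la formule a) fournit alors $f_\sharp(M(n))=f_\sharp(M\otimes f^*\bL^{\otimes n})\simeq f_\sharp M\otimes \bL^{\otimes n}=(f_\sharp M)(n)$.

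Aucun obstacle s\'erieux n'est attendu: la d\'emonstration est purement formelle une fois admis le th\'eor\`eme pr\'ec\'edent, qui est le seul ingr\'edient substantiel. Le seul point \`a surveiller est la coh\'erence des identifications $\iota f_\sharp^\eff\simeq f_\sharp\iota$ et leur compatibilit\'e avec la structure tensorielle, mais cela r\'esulte de la naturalit\'e des morphismes de changement de base.
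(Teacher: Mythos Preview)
Your proof is correct and is precisely the argument the paper's \qed is standing in for: one pushes the effective objects into $\Mot_\num$ via the fully faithful tensor functor $\iota$, uses the projection formula there (Proposition~\ref{p2}), and descends back via the base-change isomorphism of the preceding theorem together with full faithfulness. Note that this detour through $\Mot_\num$ is genuinely needed, since Proposition~\ref{p2bis} requires rigidity and $\Mot_\num^\eff$ is not rigid.
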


Dans le corollaire qui suit, on utilise la théorie $\Mot^\o$ des motifs numériques birationnels déjà utilisée dans \cite[Part II]{qjpam} et \cite{birat-pure} (cf. \S \ref{s.bir}).

\begin{cor}\phantomsection\label{c2} Pour tout corps $k$, soit $i_k:\Mot_\num^\o(k)\to \Mot_\num^\eff(k)$
l'adjoint
\`a gauche et \`a droite de la projection $\Mot_\num^\eff(k)\to \Mot_\num^\o(k)$ (cf.
\cite[prop. 7.7 b)]{qjpam}). Alors, pour toute extension primaire $f:k\to K$,\\
a)  le diagramme
\[\begin{CD}
\Mot_\num^\o(K)@>i_K>> \Mot_\num^\eff(K)\\
@A{f^{*\o}}AA @A{f^{*\eff}}AA\\
\Mot_\num^\o(k)@>i_k>> \Mot_\num^\eff(k)
\end{CD}\]
est naturellement commutatif. (Autrement dit, $i$ d\'efinit un \emph{morphisme de th\'eories
motiviques} au sens de \ref{s1.3}.) \\ 
b) Le foncteur $f^{*\o}$ est pleinement
fid\`ele et admet un adjoint \`a gauche et un adjoint \`a droite, canoniquement isomorphes.\\
c) Le morphisme de changement de base relatif au morphisme de projection $\pi:\Mot^\eff_\num\to
\Mot^\o_\num$ est un isomorphisme.\\
d) De m\^eme pour celui relatif \`a son adjoint $i$.\\
e) La formule de projection de la proposition \ref{p2} a) reste valable pour $\Mot_\num^\o$.
\end{cor}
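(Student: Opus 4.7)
The plan is to reduce every assertion to a check on simple objects, leveraging Jannsen's semi-simplicity and Proposition \ref{p1}. The key technical step is the following preservation lemma: $f^{*\eff}$ preserves both the Tate-twisted kernel $\sN(k) = \Ker(\pi_k) \subset \Mot_\num^\eff(k)$ and its semi-simple complement $\sN^\perp(k)$ (which, via $\pi_k$, is equivalent to $\Mot_\num^\o(k)$ with quasi-inverse $i_k$). Preservation of $\sN$ is trivial since $f^{*\eff}(M(1)) = (f^{*\eff}M)(1)$. For $\sN^\perp$: any simple $S \in \sN^\perp(k)$ stays simple under the fully faithful $f^{*\eff}$ of Proposition \ref{p5} (Schur in the semi-simple setting), and an isomorphism $f^{*\eff}S \simeq N(1)$ would make $f^*(S(-1))$ effective, hence by Theorem \ref{t2} would render $S(-1)$ effective, contradicting $S \in \sN^\perp(k)$.

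Assertion (a) follows at once: for $M \in \Mot_\num^\o(k)$ simple, $f^{*\eff}(i_k M)$ is simple and lies in $\sN^\perp(K)$, so it equals $i_K(N)$ for some simple $N$; applying $\pi_K$ and using the tautological identity $\pi_K f^{*\eff} = f^{*\o} \pi_k$ shows $N = f^{*\o} M$. The resulting natural iso $v_f^i : i_K f^{*\o} \iso f^{*\eff} i_k$ makes $i$ a morphism of motivic theories.

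For (b), the full faithfulness of $f^{*\o}$ is a three-line computation using (a) together with the full faithfulness of $i_k$, $i_K$ and $f^{*\eff}$; existence and coincidence of left and right adjoints then follow from Proposition \ref{p1}. For (c) and (d), the base-change morphisms are checked on simple objects: the preservation lemma combined with Proposition \ref{p1}(b) reduces each case to the dichotomy (the simple lies in $\sN$ vs.\ $\sN^\perp$, and is in the image of $f^*$ or not), and in every subcase both sides can be computed explicitly and shown to coincide. For (e), apply the effective projection formula (Corollary \ref{c1}) to $i_K N \otimes f^{*\eff} i_k M$ and propagate through $\pi$ using (a), (c) and the monoidality of $\pi$.

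The sole substantial obstacle is the preservation of $\sN^\perp$ under $f^{*\eff}$; once this is secured, the semi-simple formalism of \S \ref{s5} makes everything else routine. The crucial input is Theorem \ref{t2}, which transports non-effectivity back along $K/k$.
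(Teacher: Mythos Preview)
Your proposal is correct and follows essentially the same route as the paper: the key step in both is showing that $f^{*\eff}$ preserves the ``primitive'' complement (your $\sN^\perp$) via Theorem \ref{t2}, after which (a)--(e) are checked on simples exactly as you describe. The paper's phrasing uses the coproduct decomposition $\Mot_\num^\eff(k)\simeq \sN(k)\amalg i_k(\Mot_\num^\o(k))$ of \cite[prop.~7.7~c)]{qjpam} rather than your $\sN/\sN^\perp$ language, and for (c) it explicitly invokes Corollary \ref{c1} to compute $f_\sharp^\eff(S'(1))$ in the Tate-twisted subcase, but these are cosmetic differences.
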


\begin{proof} a) Soit $\bL$ le motif de Lefschetz. Rappelons \cite[prop. 7.7 c)]{qjpam} que pour
tout corps
$k$, $\Mot_\num^\eff(k)$ est le coproduit
de ses sous-cat\'egories pleines $\Mot_\num^\eff(k)\otimes \bL$ et $i_k(\Mot_\num^\o(k))$, i.e.
tout objet s'\'ecrit de mani\`ere unique comme somme directe d'objets de ces deux
sous-cat\'egories. En particulier, $\Mot_\num^\o(k)$ est ab\'elienne semi-sim\-ple et $i_k$
est pleinement fid\`ele, d'image engendr\'ee par les objets simples
$S\in \Mot_\num^\eff(k)$ tels que $S(-1)$ ne soit pas effectif. On appelle ces derniers motifs
\emph{primitifs}.

Le foncteur $f^{*\eff}$ envoie $\Mot_\num^\eff(k)\otimes \bL$ dans $\Mot_\num^\eff(K)\otimes
\bL$, et le point est de montrer qu'il envoie aussi $i_k(\Mot_\num^\o(k))$ dans
$i_K(\Mot_\num^\o(K))$.

Soit donc $S\in\Mot_\num^\o(k)$ un objet simple.  Supposons que le motif
$f^{*\eff}(i_k S)(-1)=f^{*\eff}(i_k S(-1))$ soit effectif. D'apr\`es le th\'eor\`eme \ref{t2},
$(i_k S)(-1)$ est effectif, ce qui est absurde.

Comme $K/k$ est primaire, $f^{*\eff}$ est pleinement fid\`ele (th\'eor\`eme \ref{t1}). En
particulier, $f^{*\eff}(i_k S)$ est simple.  Par cons\'equent,
$f^{*\eff}(i_k S)\simeq i_K S'$ pour un $S'\in \Mot_\num^\o(K)$ simple. 

Dans b), la pleine fid\'elit\'e r\'esulte de a). Le reste r\'esulte de la proposition
\ref{p1}. 

c) Il suffit de tester sur un objet simple 
$S\in \Mot_\num^\eff(K)$. Si $S\simeq S'(1)$ (avec $S'$ effectif), alors $f_\sharp^\eff S\simeq
(f_\sharp^\eff S')(1)$ par le corollaire \ref{c1}, et dans le morphisme
\[f_\sharp^\o\pi_K S\to \pi_k f_\sharp^\eff S\]
les deux membres sont nuls. Supposons maintenant $S$ primitif. D'apr\`es a), $S$ est 
d\'efini sur
$k$ si et seulement si $\pi_K S$ est d\'efini sur $k$. Si ce n'est pas le cas, les deux membres
sont encore nuls. Si $S\simeq f^{*\eff} S'$, alors $\pi_K S\simeq f^{* o} \pi_k S'$, les deux
membres s'identifient \`a $\pi_k S'$ et la fl\`eche \`a l'identit\'e. 

d) R\'esulte soit de la d\'emonstration de c), soit de la compatibilit\'e des
adjoints \`a la composition.

e) Soit $(M,N)\in \Mot_\num^\o(K)\times \Mot_\num^\o(k)$. D'apr\`es le corollaire \ref{c1}, on a
un isomorphisme dans $\Mot_\num^\eff(k)$
\[f_\sharp^\eff(i_K M\otimes f^{*\eff} i_k N)\simeq (f_\sharp^\eff i_K M)\otimes i_k N.\]

On obtient la formule voulue en appliquant $\pi_k$ \`a cet isomorphisme, en utilisant que
$\pi_k$ commute au produit tensoriel et en utilisant c).
\end{proof}

\subsection{Motifs homologiques} \label{5.7} Conjecturalement, équivalences homologique et numérique coïncident; les adjoints à gauche et à droite du foncteur d'extension des scalaires devraient donc exister pour les motifs homologiques. Voici tout ce que je sais dire:

\begin{prop}\phantomsection\label{p5.2} Soient $p=0$ ou un nombre premier, $l\ne p$ un nombre premier, et $F_l$ la théorie motivique de la définition \ref{d2.1}. Alors, pour toute extension $f:k\to K$ de type fini dans $\corps_p$, le foncteur d'``extension des scalaires'' $f^*:F_l(k)\to F_l(K)$ admet un adjoint à gauche $f_\sharp$ et un adjoint à droite $f_*$. Ces adjoints sont compatibles aux poids et respectent la sous-théorie $F_l^{ss}$ de la remarque \ref{r2.1}; le morphisme canonique $f_*\Rightarrow f_\sharp$, restreint à $F_l^{ss}(K)$, est un isomorphisme.
\end{prop}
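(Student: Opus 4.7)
Le plan consiste à construire explicitement $f_\sharp$ et $f_*$ à partir de la description de $F_l(k)$ comme catégorie de représentations continues du groupoïde de Galois absolu de $k$. La première étape sera de se ramener au cas où $k$ (et donc $K$) est lui-même de type fini sur son sous-corps premier: étant donné une présentation $K=k(x_1,\ldots,x_n)/\langle r_1,\ldots,r_m\rangle$ et un sous-corps finiment engendré $k_\alpha\subset k$ contenant les coefficients des $r_j$, le corps $K_\alpha=k_\alpha(x_1,\ldots,x_n)/\langle r_1,\ldots,r_m\rangle\subset K$ est de type fini sur $k_\alpha$, avec $K=\bigcup K_\alpha$ et $k=\bigcup k_\alpha$. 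Par la proposition~\ref{p.2colim}, il suffira de construire les adjoints $f_{\alpha,\sharp}$, $f_{\alpha,*}$ au niveau de chaque étape et de vérifier leur compatibilité aux foncteurs de transition.

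Dans ce cas ``de type fini sur le corps premier'', on factorise $K/k$ en $K/L/k$ avec $L/k$ purement transcendante de degré de transcendance fini et $K/L$ finie; les adjoints se composant, il suffit de traiter ces deux cas séparément. Pour $K/L$ radicielle, un argument analogue à celui de la proposition~\ref{l3.1} donne une équivalence de catégories; pour $K/L$ finie séparable, $G_K$ s'identifie à un sous-groupe ouvert d'indice fini $n=[K:L]$ de $G_L$, et l'induction classique $\operatorname{Ind}_{G_K}^{G_L}V$ fournit simultanément un adjoint à gauche et à droite qui coïncident à cause de la finitude de l'indice. Pour $L/k$ purement transcendante, le morphisme $G_L\to G_k$ est \emph{surjectif} de noyau $N$ (groupe de Galois ``géométrique''), et on pose $f_\sharp V=V_N$ (coinvariants), $f_*V=V^N$ (invariants); ces sous-quotients sont de dimension finie, munis d'une action continue de $G_k=G_L/N$ héritée de celle de $G_L$ sur $V$, et satisfont aux propriétés universelles voulues.

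La compatibilité aux poids sera immédiate, toutes ces constructions ($\operatorname{Ind}$, $(-)^N$, $(-)_N$) se faisant composante par composante sur la graduation. Pour la stabilité de $F_l^{ss}$: en caractéristique nulle, l'induction d'une représentation semi-simple à partir d'un sous-groupe d'indice fini est semi-simple (via Maschke appliqué à un quotient fini convenable); dans le cas transcendant, si $V$ est semi-simple comme $G_L$-module, alors $V^N$ en est un facteur direct et reste semi-simple comme $G_L$-module, ses composantes simples (dont l'action se factorise par $G_k$) étant simples comme $G_k$-modules. L'isomorphisme $f_*\iso f_\sharp$ sur $F_l^{ss}$ résultera de la décomposition $V=V^N\oplus V'$ où $V'$ ne contient aucun facteur direct trivial sous $N$; par semi-simplicité, $V'_N=0$, d'où $V_N\simeq V^N$, la composition $V^N\hookrightarrow V\twoheadrightarrow V_N$ étant l'isomorphisme canonique cherché.

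Les principaux obstacles seront de deux ordres. D'une part, la vérification de la continuité de l'action induite sur $V^N$ et $V_N$ et la préservation de la semi-simplicité par les diverses opérations demanderont un contrôle technique des représentations continues $l$-adiques (notamment pour les groupoïdes plutôt que les groupes, où la fonctorialité canonique en $k$ est cruciale, comme le souligne la note de la définition~\ref{d2.1}). D'autre part, la mise en place rigoureuse du passage à la $2$-colimite nécessitera de prouver la cohérence des $f_{\alpha,\sharp}$ vis-à-vis des changements de modèle $(k_\alpha,K_\alpha)\to(k_\beta,K_\beta)$, ce qui reviendra à un énoncé de changement de base (formule de Mackey dans le cas fini, commutation des invariants au passage à la limite dans le cas transcendant): c'est formel mais devra être rédigé explicitement.
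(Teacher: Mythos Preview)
Your approach is essentially that of the paper: reduce to $k$ of finite type over its prime field, then factor $K/k$ into a finite piece (handled by induction, which coincides with coinduction for finite index) and a piece where the relevant Galois map is surjective (handled by coinvariants $V_H$ and invariants $V^H$). The paper factors in the opposite order --- it replaces $k$ by its algebraic closure $k'$ in $K$ so that $K/k'$ is \emph{regular}, invoking the finite case (in the spirit of Theorem~\ref{t3.1}) first. This ordering buys a small simplification: once one is in the regular case, $f_\sharp V=V_H$ and $f_*V=V^H$ are respectively a quotient and a subobject of $V$, so preservation of $F_l^{ss}$ is immediate (``sous-repr\'esentation et quotient d'une semi-simple sont semi-simples''), and one avoids having to argue separately that $\operatorname{Ind}$ preserves semi-simplicity. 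Two minor points on your write-up: Proposition~\ref{p.2colim} as stated does not cover the situation where \emph{both} source and target are $2$-colimites with non-invertible transition functors, so the reduction to finite type really does require the base-change verification you flag at the end rather than a direct citation; and ``via Maschke appliqu\'e \`a un quotient fini'' is not quite the right justification for semi-simplicity of the induced representation --- the clean argument is restriction--corestriction on $H^1$, using that $[G_L:G_K]$ is invertible in $\Q_l$.
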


\begin{proof} On se ramène au cas où $k$ est de type fini sur son sous-corps premier, puis en utilisant le  théorème \ref{t3.1} au cas où $K/k$ est régulière. Soit $H=\Ker(G_K\to G_k)$: on voit tout de suite que $f_\sharp$ et $f_*$ sont donnés par
\[f_\sharp V = V_H, \quad f_* V = V^H.\]

La compatibilité aux poids est évidente, ainsi que le fait que ces foncteurs envoient $F_l^{ss}(K)$ dans $F_l^{ss}(k)$ (une sous-représentation et une représentation quotient d'une représentation semi-simple sont semi-simples).
\end{proof}

Malheureusement, la conjecture de Tate ne semble pas suffisante pour en déduire l'existence d'adjoints pour les motifs homologiques: outre le problème de descendre les coefficients de $\Q_l$ à $\Q$, se pose un problème d'essentielle surjectivité. Quand $k$ est un corps fini, on peut le résoudre grâce à Weil I et au théorème de Honda: en effet, les valeurs propres de Frobenius opérant sur $V^H$ sont des nombres de Weil, comme on le voit par un argument de spécialisation. Si $k$ est un corps de nombres, on peut peut-être utiliser la conjecture de Fontaine-Mazur (en plus de la conjecture de Tate!) pour conclure. Pour d'autres corps $k$, je ne sais pas. Peut-on remplacer ces rustines un peu fortes par, disons, la conjecture de Tate généralisée?

\section{Cat\'egories birationnelles}\label{s6}

\subsection{Cat\'egorie birationnelle des vari\'et\'es lisses}

\begin{thm}\phantomsection\label{t3} Soit  $f:k\to K$ une extension de type
fini séparable. Soit $S_b$ l'ensemble des morphismes
birationnels. Alors le foncteur
\[f^*:S_b^{-1}\Sm(k)\to S_b^{-1}\Sm(K)\]
admet un adjoint \`a gauche $f_\sharp$. Pour $X\in S_b^{-1}\Sm(K)$, on a $f_\sharp X = \sX$ pour tout $k$-modèle lisse $\sX$ de $X$.
\end{thm}

Nous allons donner deux d\'emonstrations. La premi\`ere utilise la r\'e\-so\-lu\-tion des
singularit\'es de Hironaka, et n'est donc valable qu'en caract\'eristique z\'ero; la seconde
est valable en toute caract\'eristique.

\begin{proof}[Premi\`ere d\'emonstration] D'après \cite[th. 6.6.3]{birat}, on a une bijection canonique
\begin{equation}\label{eq6.1}
S_b^{-1}\Sm(k)(V,W)\simeq W(k(V))/R
\end{equation}
pour tout couple $(V,W)$ de $k$-variétés lisses, avec $W$ propre, où le membre de droite est l'ensemble des classes de $R$-équivalence. Supposons maintenant $k$ de caractéristique $0$. D'apr\`es \cite[prop. 8.5]{localisation}, on peut remplacer les vari\'et\'es
lisses $\Sm$ par les vari\'et\'es projectives lisses $\Sm^\proj$. 
Soit $X\in S_b^{-1}\Sm^\proj(K)$. On veut montrer que le foncteur
\[Y\mapsto \Hom(X,Y_K)\]
de $S_b^{-1}\Sm^\proj(k)$ vers la cat\'egorie des ensembles est corepr\'esentable. D'apr\`es
\eqref{eq6.1}, la valeur de ce foncteur sur $Y$ est
\[Y_K(K(X))/R\]
o\`u $R$ d\'esigne la R-\'equivalence. Soit $\sX$ une $k$-vari\'et\'e projective lisse telle que $k(\sX)\simeq K(X)$ (r\'esolution des
singularit\'es de Hironaka). On a alors
\[Y_K(K(X))/R\simeq Y(k(\sX))/R\]
et notre foncteur est corepr\'esent\'e par $\sX$.
\end{proof}

\begin{proof}[Seconde d\'emonstration] Rappelons que ``séparable de type fini'' $\iff$ ``admet un modèle lisse'' \cite[lemma 4.8.1]{birat}. Pour tout modèle lisse $U$ de $K/k$,  soit $\Sm(U)$ la cat\'egorie des $U$-sch\'emas lisses s\'epar\'es de type fini. On a
\[2-\colim_U \Sm(U)\iso \Sm(K)\]
(limite prise sur les immersions ouvertes). Si $p_U:U\to \Spec k$ est le morphisme structural, on a un couple de foncteurs
adjoints:
\[p_U^*:\Sm(k)\leftrightarrows \Sm(U):(p_U)_\sharp\]
donn\'es par l'extension et la restriction des scalaires ($(p_U)_\sharp$ est adjoint à gauche de $p_U^*$). De même, si $j:U'\to U$ est une immersion ouverte, on a un couple de foncteurs adjoints
\[j^*:\Sm(U)\leftrightarrows \Sm(U'):j_\sharp.\]

Tous ces foncteurs respectent les morphismes birationnels. D'après la proposition \ref{p.2colim}, pour conclure il suffit de démontrer que les foncteurs
\begin{equation}\label{eq6.2}
S_b^{-1} j^*:S_b^{-1}\Sm(U)\to S_b^{-1}\Sm(U')
\end{equation}
induits par les immersions ouvertes $j$ sont des équivalences de catégories. Mais c'est évident: l'adjonction $(j_\sharp,j^*)$ induit une adjonction $(S_b^{-1}j_\sharp,S_b^{-1} j^*)$ et il suffit de montrer que son unité et sa coünité sont des isomorphismes naturels. Or l'unité $Id_{\Sm(U')}\Rightarrow j^*j_\sharp$ est déjà un isomorphisme naturel, et la coünité $j_\sharp j^*\Rightarrow Id_{\Sm(U)}$ est une immersion ouverte quand on l'évalue sur tout $X\in \Sm(U)$.  La dernière assertion du théorème est évidente par construction.
\end{proof}

Pour le corollaire suivant, soient $K/k$ une extension séparable de type fini et  $X_1,X_2$ deux $K$-variétés propres et lisses munies de $k$-modèles propres et lisses $\sX_1$ et $\sX_2$. Soit $\alpha:X_1\tto X_2$ une $K$-application rationnelle: rappelons que $\alpha$ définit un morphisme dans $S_b^{-1}\Sm(K)$, donc une application   $\alpha_*(L):X_1(L)/R\to X_2(L)/R$ pour toute extension séparable de type fini $L/K$ (\cite[prop. 10]{ct-s} en caractéristique zéro,  \cite[6.3 et cor. 6.6.6]{birat} en général).

\begin{cor}\phantomsection\label{c6.1}  Supposons que $\alpha_*(L)$ soit bijective pour tout $L/K$.  Alors $\alpha$ induit un isomorphisme $\sX_1(L)/R\iso \sX_2(L)/R$ pour toute extension séparable de type fini $L/k$.
\end{cor}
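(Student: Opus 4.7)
The plan is to reduce the corollary to functoriality of the adjoint $f_\sharp$ from Theorem \ref{t3}. The three steps are: (i) interpret the hypothesis as saying that $\alpha$ is an isomorphism in $S_b^{-1}\Sm(K)$; (ii) transport this isomorphism to $S_b^{-1}\Sm(k)$ by applying $f_\sharp$; (iii) read off the bijection on $R$-equivalence classes using \eqref{eq6.1} over $k$.

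For step (i), I would use \eqref{eq6.1}: for any smooth $K$-variety $V$, the properness of $X_i$ yields
\[\Hom_{S_b^{-1}\Sm(K)}(V, X_i) = X_i(K(V))/R.\]
Since $K(V)$ (on each connected component of $V$) is separable and of finite type over $K$, the hypothesis applied with $L = K(V)$ shows that post-composition with $\alpha$ is bijective on these Hom sets. Yoneda then gives that $\alpha$ is an isomorphism in $S_b^{-1}\Sm(K)$.

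For step (ii), I would apply the left adjoint $f_\sharp : S_b^{-1}\Sm(K) \to S_b^{-1}\Sm(k)$ of Theorem \ref{t3}. Using $f_\sharp X_i = \sX_i$, functoriality produces an isomorphism $f_\sharp \alpha : \sX_1 \iso \sX_2$ in $S_b^{-1}\Sm(k)$. For step (iii), given a separable extension $L/k$ of finite type, I would choose a smooth $k$-model $V$ of $L$ (i.e. $V$ smooth with $k(V) = L$); then \eqref{eq6.1} applied over $k$, with $\sX_i$ proper and lisse, gives
\[\Hom_{S_b^{-1}\Sm(k)}(V, \sX_i) = \sX_i(L)/R,\]
and composition with $f_\sharp \alpha$ supplies the desired bijection $\sX_1(L)/R \iso \sX_2(L)/R$, manifestly induced by $\alpha$ via the construction.

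There is no substantial obstacle once Theorem \ref{t3} and \eqref{eq6.1} are available. The only point requiring care is the Yoneda step in (i): one must verify that the hypothesis, stated only for extensions of $K$, suffices to detect isomorphisms in $S_b^{-1}\Sm(K)$. This works because the objects of $S_b^{-1}\Sm(K)$ are just smooth separated $K$-schemes of finite type, and each such $V$ has function field(s) separable of finite type over $K$ — so the hypothesis controls every representable functor on the category, and Yoneda applies. Everything else is formal.
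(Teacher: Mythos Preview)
Your proof is correct and follows essentially the same route as the paper: the hypothesis is translated via \eqref{eq6.1} and Yoneda into $\alpha$ being an isomorphism in $S_b^{-1}\Sm(K)$, then $f_\sharp$ from Theorem~\ref{t3} (with $f_\sharp X_i = \sX_i$) transports this to an isomorphism in $S_b^{-1}\Sm(k)$, which \eqref{eq6.1} converts back into the desired bijections. The paper compresses these steps into two sentences, but the argument is the same.
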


\begin{proof}  Vu \eqref{eq6.1} et Yoneda, l'hypothèse et la conclusion sont respectivement équivalentes à $\alpha:X_1\iso X_2$ dans $S_b^{-1}\Sm(K)$ (resp. $\sX_1\iso \sX_2$ dans $S_b^{-1}\Sm(k)$). Via le théorème \ref{t3}, le corollaire résulte alors du fait que l'image d'un isomorphisme par un foncteur est un isomorphisme.
\end{proof}

Comme cas particulier, notons l'analogue suivant d'un théorème de Graber, Harris et Starr \cite[cor. 1.3]{ghs}: disons qu'une $k$-variété propre et lisse $X$ est \emph{universellement $R$-triviale} si l'ensemble $X(L)/R$ est un singleton pour toute extension séparable $L/k$.

\begin{cor}\phantomsection\label{c6.1a} Soit $\pi:\sX\tto Y$ une application rationnelle dominante de $k$-variétés propres, lisses et connexes, dont la restriction à un ouvert de définition est \emph{propre} (cf. \cite[déf. 5.12]{debarre}). Supposons que la fibre générique de $\pi$ soit universellement $R$-triviale. Alors $\sX(L)/R\iso Y(L)/R$ pour toute extension $L/k$.
\end{cor}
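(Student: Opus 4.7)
Le plan est de ramener cet \'enonc\'e au Corollaire \ref{c6.1} en posant $K=k(Y)$ et en prenant pour $X_2$ la vari\'et\'e triviale $\Spec K$, munie du $k$-mod\`ele $\sX_2=Y$. L'id\'ee-cl\'e est que l'universelle $R$-trivialit\'e de la fibre g\'en\'erique $X$ se traduit pr\'ecis\'ement par l'hypoth\`ese de bijectivit\'e du Corollaire \ref{c6.1} appliqu\'ee au morphisme structural $X\to \Spec K$.

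Plus pr\'ecis\'ement, posons $K=k(Y)$: cette extension est s\'eparable et de type fini, $Y$ \'etant lisse de type fini sur $k$. La fibre g\'en\'erique $X$ de $\pi$ est une $K$-vari\'et\'e propre en vertu de l'hypoth\`ese de propret\'e de la restriction de $\pi$; on a $K(X)=k(\sX)$ puisque $\pi$ est dominante, et $k(Y)=K$ par d\'efinition, si bien que $\sX$ et $Y$ sont respectivement des $k$-mod\`eles (propres et lisses) de $X$ et de $\Spec K$. J'appliquerais alors le Corollaire \ref{c6.1} avec $X_1=X$, $X_2=\Spec K$, $\sX_1=\sX$, $\sX_2=Y$ et pour $\alpha$ le morphisme structural $X\to \Spec K$: pour toute extension $L/K$, l'ensemble $(\Spec K)(L)/R$ est un singleton, donc $\alpha_*(L)$ est bijective si et seulement si $X(L)/R$ est un singleton, ce qui est exactement l'hypoth\`ese. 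Le Corollaire \ref{c6.1} livre alors l'isomorphisme voulu $\sX(L)/R\iso Y(L)/R$ pour toute extension s\'eparable de type fini $L/k$; on v\'erifie au passage, via la fonctorialit\'e du foncteur $f_\sharp$ du Th\'eor\`eme \ref{t3}, que cet isomorphisme est bien celui induit par $\pi$.

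Pour passer aux extensions $L/k$ arbitraires, on \'ecrit $L$ comme colimite filtr\'ee de ses sous-extensions finiment engendr\'ees et on utilise la compatibilit\'e de la $R$-\'equivalence \`a de telles colimites de corps, la partie ins\'eparable d'une sous-extension finiment engendr\'ee \'etant g\'er\'ee via la Proposition \ref{l3.1} (ou un argument de Frobenius analogue, en caract\'eristique positive).

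L'obstacle principal est de garantir la lissit\'e de $X$ comme $K$-vari\'et\'e, requise par le Corollaire \ref{c6.1}: en effet, bien que $\sX$ et $Y$ soient lisses sur $k$, la fibre g\'en\'erique de $\pi$ peut ne pas l'\^etre en caract\'eristique positive. En caract\'eristique z\'ero, la lissit\'e g\'en\'erique la fournit gratuitement. En caract\'eristique positive, on remplacerait $\sX$ par un $k$-mod\`ele birationnel (via une alt\'eration de de Jong) dont la composition avec $\pi$ admet pour fibre g\'en\'erique une vari\'et\'e lisse; l'invariance birationnelle de la $R$-\'equivalence sur les vari\'et\'es propres et lisses (cons\'equence de \eqref{eq6.1} et du Th\'eor\`eme \ref{t3}) permet alors de transporter l'isomorphisme obtenu au $\sX$ initial.
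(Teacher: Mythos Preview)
Your core reduction is exactly the paper's: set $K=k(Y)$, take $X_1=X$, $X_2=\Spec K$, $\sX_1=\sX$, $\sX_2=Y$, and observe that universal $R$-triviality of $X$ is precisely the bijectivity hypothesis of Corollaire~\ref{c6.1} for the structural morphism. The paper's proof is the one-liner ``on prend $\sX_1=\sX$ et $\sX_2=Y$ dans le corollaire~\ref{c6.1}''.

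Your last two paragraphs, however, address a non-issue and introduce a problematic fix. The definition of \emph{universellement $R$-triviale} given just before Corollaire~\ref{c6.1a} is stated for a variety that is already \emph{propre et lisse}. Thus the hypothesis ``la fibre g\'en\'erique de $\pi$ est universellement $R$-triviale'' already contains the smoothness of $X$ over $K$; there is nothing to repair. Your proposed repair via a de~Jong alteration is in any case not workable as stated: an alteration is generically \'etale, not birational, so replacing $\sX$ by an alteration does not give a $k$-mod\`ele birationnel in general, and the invariance argument you invoke would not apply.

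Your paragraph on passing from separable finite type $L/k$ to arbitrary $L/k$ goes beyond what the paper writes (the paper says nothing about it). The filtered-colimit argument is the natural one, but your appeal to Proposition~\ref{l3.1} is off target: that proposition concerns $\Corf$ and the $\DM$-categories, not $S_b^{-1}\Sm$ or $R$-equivalence directly. If you want to handle purely inseparable pieces, you should argue directly on $R$-equivalence (e.g.\ via the universal homeomorphism $\Spec L'\to\Spec L''$ for $L'/L''$ radicial and the smoothness of $\sX$, $Y$ over $k$), rather than quoting a result about correspondences.
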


\begin{proof} Soit $K=k(Y)$: on prend $\sX_1=\sX$ et $\sX_2=Y$ dans le corollaire \ref{c6.1}.
\end{proof}

\begin{rque}\phantomsection On pourrait penser utiliser les \'equivalences de ca\-t\'e\-go\-ries \eqref{eq6.2} pour
d\'efinir des morphismes de sp\'ecialisation, mais les choses ne sont pas si simples. Plus
pr\'ecis\'ement, soit $i:Z\inj U$ une immersion ferm\'ee. On a un morphisme de changement de
base
\[i^*:\Sm(U)\to \Sm(Z)\]
mais ce morphisme n'envoie pas $S_b$ dans $S_b$. Par exemple, soit $j:U-Z\inj U$ l'immersion
ouverte compl\'ementaire. Alors $i^*j$ est l'inclusion du vide dans $Z$. Pour avoir un bon
foncteur sur les localisations, il faut donc se restreindre aux morphismes birationnels ``en
bonne position par rapport \`a $i$'': par d\'efinition, un $U$-morphisme birationnel $f:X\to Y$
est en bonne position par rapport \`a $i$ si $i^* f$ est encore un morphisme birationnel.
\end{rque}

\subsection{Correspondances finies birationnelles}

\begin{thm}\phantomsection\label{t6.1} Soit  $K/k$ une extension de type
fini séparable. Soit $S_b$ l'ensemble des morphismes
birationnels. Alors le foncteur
\[S_b^{-1}\Corf(k)\to S_b^{-1}\Corf(K)\]
admet un adjoint \`a gauche. Ceci s'étend au cas d'une extension $K/k$ de type fini quelconque, si on inverse l'exposant caractéristique.
\end{thm}

\begin{proof} La proposition \ref{l3.1} nous ramène au cas séparable. On procède alors comme dans la seconde démonstration du théorème \ref{t3}. Pour tout  modèle lisse $U$ de $K/k$, on a la cat\'egorie $\Corf(U)$ des correspondances finies de base $U$: ses objets sont les $U$-schémas $X/U$ lisses séparés de type fini, et
\[\Corf(U)(X/U,Y/U) = c_{equi}(X\times_U Y/X,0)\]
(\cite[app. 1A]{mvw}, \cite[déf. 1.2]{ivorra}, \cite{deglise}). On a
\[2-\colim_U \Corf(U)\iso \Corf(K)\]
\cite[cor. 4.12]{ivorra}. On peut alors reprendre mot pour mot la démonstration ci-dessus.
\end{proof}

\subsection{Motifs de Chow birationnels}

\begin{thm}\phantomsection\label{t4} Soit $K/k$ une extension de type fini. Soit $A$ un anneau
commutatif dans lequel $p$ est inversible, où $p$ est l'exposant caractéristique de $k$. Alors le foncteur d'extension des scalaires $\Chow^\o(k,A)\to
\Chow^\o(K,A)$ admet un adjoint \`a gauche.
\end{thm}

\begin{proof} D'après \cite[th. 3.4.1]{birat-tri}, on a une équivalence de catégories
\[\Chow^\o(k,A)\iso (S_b^{-1} \Corf(k)\otimes A)^\natural\]
et de même pour $K$, où $()^\natural$ désigne l'enveloppe pseudo-abélienne. La conclusion résulte donc de la proposition \ref{l3.1} b) et du lemme \ref{l3.2} b).
\end{proof}

\begin{rque}\phantomsection Comme dans le cas du th\'eor\`eme \ref{t3}, on peut donner une autre d\'emonstration du théorème \ref{t4}, en utilisant la formule
\begin{equation}\label{eq6.3}
\Chow^\o(k,\Z)(h^\o(X),h^\o(Y))\simeq CH_0(Y_{k(X)})
\end{equation}
de \cite[lemme 2.3.7]{birat-pure} pour deux $k$-vari\'et\'es projectives lisses $X,Y$. En caractéristique $0$, cette démonstration utilise la résolution des singularités; en caractéristique $>0$, elle devient extrêmement compliquée et je ne sais la faire marcher qu'en recourant au théorème de de Jong équivariant \cite[th. 7.3]{dJ}; en particulier, il faut supposer que $A$ est une $\Q$-algèbre. La reproduire ici ne présente donc aucun intérêt.
\end{rque}

Voici un analogue du corollaire \ref{c6.1}, qui redonne et précise le résultat de Vial \cite[th. 1.3]{vial} et une application de Sebastian \cite[\S 3.2]{seb2}:

\begin{cor}\phantomsection\label{c6.2} a) Soient $K/k$ une extension séparable de type fini, $X_1,X_2$ deux $K$-variétés projectives lisses munies de $k$-modèles projectifs lisses $\sX_1,\sX_2$, et soit $A$ une $\Z[1/p]$-algèbre commutative. Soit $\alpha\in CH_{\dim X_1}(X_1\times_K X_2)$ une correspondance algébrique. Supposons que $\alpha_*:CH_0((X_1)_L)\otimes A\iso CH_0((X_2)_L)\otimes A$ pour toute extension séparable de type fini $L/K$. Alors $\alpha$ induit un isomorphisme $CH_0((\sX_1)_L)\otimes A\iso CH_0((\sX_2)_L)\otimes A$ pour toute extension de type fini $L/k$.\\
b) Soit $\pi:\sX_1\tto \sX_2$ un $k$-``quotient rationnel'' au sens de Campana \cite[ch. 5]{debarre} (= ``MRCC fibration'' de Koll\'ar \cite[IV.5, Def. 5.1]{kollar}), et soit $A=\Q$. Alors $\pi$ induit un isomorphisme de motifs birationnels
\[h^\o(\sX_1)\iso h^\o(\sX_2)\]
et un isomorphisme $CH_0((\sX_1)_L)\otimes \Q\iso CH_0((\sX_2)_L)\otimes \Q$ pour toute extension de type fini $L/k$. 
\end{cor}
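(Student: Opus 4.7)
The approach to (a) is to translate both the hypothesis and the conclusion into statements about isomorphisms in the birational Chow categories via the formula \eqref{eq6.3}, and then to transport the resulting isomorphism through the adjoint of Theorem \ref{t4}. By \eqref{eq6.3} with $A$-coefficients, and since every finitely generated separable $L/K$ arises as $K(V)$ for some projective smooth $V/K$, the hypothesis that $\alpha_*:CH_0((X_1)_L)\otimes A\iso CH_0((X_2)_L)\otimes A$ for every such $L$ says, by Yoneda in the pseudo-abelian category $\Chow^\o(K,A)$, exactly that $\alpha$ is an isomorphism $h^\o(X_1)\iso h^\o(X_2)$. Theorem \ref{t4} supplies a left adjoint $f_\sharp:\Chow^\o(K,A)\to\Chow^\o(k,A)$, and unpacking its $2$-colimit construction (Theorem \ref{t6.1} and Proposition \ref{p.2colim}) one identifies $f_\sharp h^\o(X_i)\simeq h^\o(\sX_i)$, in the spirit of the last clause of Theorem \ref{t3}. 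Applying $f_\sharp$ to $\alpha$ and re-applying \eqref{eq6.3} over $k$ then yields the desired $CH_0\otimes A$ isomorphisms for every finitely generated separable $L/k$; the case of inseparable extensions follows from Proposition \ref{l3.1}.

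For (b), the plan is to reduce to (a). Set $K=k(\sX_2)$ and, after passing to a projective smooth alteration of $\sX_1$ that resolves the indeterminacy of $\pi$ (which is harmless in $\Chow^\o(k,\Q)$ since $A=\Q$), assume $\pi$ is a morphism. Let $F=\sX_1\times_{\sX_2}\Spec K$ be the generic fiber of $\pi$: it is rationally chain connected over $K$ by the MRCC property. A standard argument (essentially \cite[th. 1.3]{vial}) gives $CH_0(F_L)\otimes\Q=\Q$ for every finitely generated $L/K$, i.e.\ the structural correspondence $F\to\Spec K$ is an isomorphism $h^\o(F)\iso\un$ in $\Chow^\o(K,\Q)$. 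Part (a), applied with $X_1=F$, $X_2=\Spec K$, and with $\sX_1$, $\sX_2$ as the natural $k$-models, then yields the claimed isomorphism $h^\o(\sX_1)\iso h^\o(\sX_2)$ in $\Chow^\o(k,\Q)$ together with the $CH_0\otimes\Q$ isomorphisms for every finitely generated $L/k$.

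The main technical point to verify is the formula $f_\sharp h^\o(X_i)\simeq h^\o(\sX_i)$: Theorem \ref{t4} produces $f_\sharp$ only abstractly, and one must revisit the proof of Theorem \ref{t6.1} and trace through the $2$-colimit description of $\Corf(K)$. The subtlety is that a given projective smooth $k$-model $\sX_i$ need not descend to a single open stage $U$ of a model of $K/k$, but it is always birational to such a descent, which is enough in $\Chow^\o$. In (b), the most delicate input is instead the vanishing $h^\o(F)\simeq\un$: this is where the $\Q$-coefficient hypothesis and the theorem of Koll\'ar--Miyaoka--Mori on $0$-cycles of rationally connected varieties (equivalently, \cite{vial}) enter in an essential way. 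Conceptually, the whole strategy reduces the birational geometry of a dominant map of $k$-varieties to that of its generic fiber over $k(\sX_2)$, which is precisely the function of the adjoint provided by Theorem \ref{t4}.
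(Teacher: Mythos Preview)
Your approach to (a) matches the paper's: translate via \eqref{eq6.3} and Yoneda to an isomorphism $h^\o(X_1)\simeq h^\o(X_2)$ in $\Chow^\o(K,A)$, push it through $f_\sharp$ (Theorem~\ref{t4}), and identify $f_\sharp h^\o(X_i)\simeq h^\o(\sX_i)$ from the explicit description of the adjoint coming from the $2$-colimit construction.

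For (b), the paper likewise reduces to (a) with $K=k(\sX_2)$, $X_1$ the generic fibre of $\pi$, $X_2=\Spec K$, and $\alpha$ the graph of $\pi$, using rational chain connectedness of $X_1$ to obtain $CH_0((X_1)_L)\otimes\Q\simeq\Q$. Your alteration step, however, is both unnecessary and, taken literally, problematic: a non-birational alteration $\sX_1'\to\sX_1$ can destroy rational chain connectedness of the generic fibre (e.g.\ a double cover of $\P^1$ by an elliptic curve), so the hypothesis needed to apply (a) to $\sX_1'$ may fail. The paper sidesteps this entirely by working with the generic fibre of the rational map $\pi$ on its domain of definition and using its graph as the correspondence $\alpha$, with no resolution of indeterminacy required. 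In characteristic zero you could repair your version by resolving indeterminacy \emph{birationally} (Hironaka), but the paper's direct route is cleaner and characteristic-free.
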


\begin{proof} a) La même que celle du corollaire \ref{c6.1}, en utilisant \eqref{eq6.3} et le théorème \ref{t4} au lieu de \eqref{eq6.1} et du théorème \ref{t3}. b) Soient $K=k(\sX_2)$, $X_1$ la fibre générique de $\pi$, $X_2=\Spec K$ et $\alpha$ le graphe de l'application rationnelle $\pi$. Par définition du quotient rationnel, $X_1$ est rationnellement connexe par chaînes, donc $\alpha$ induit un isomorphisme  $CH_0((X_1)_L)\otimes \Q\iso \Q$ pour toute extension $L/K$. L'énoncé est donc un cas particulier de a).
\end{proof}

\enlargethispage*{20pt}

\subsection{Motifs birationnels triangul\'es}

\begin{thm}\phantomsection\label{t5} Soit $K/k$ une extension de type fini, et soit
$A$ un anneau commutatif. Si $K/k$ est séparable, le foncteur d'extension des scalaires
$\DM_\gm^\o(k,A)\to\DM_\gm^\o(K,A)$ admet un adjoint \`a gauche. En général, ceci reste vrai pourvu que l'exposant caractéristique de $k$ soit inversible dans $A$.
\end{thm}

\begin{proof} Lorsque $K/k$ est séparable, on proc\`ede comme dans la seconde preuve du th\'e\-o\-r\`e\-me \ref{t3} en utilisant pour tout modèle lisse $U$ de $K/k$ la catégorie triangulée 
$\DM_\gm^\eff(U,A)$ des motifs g\'eom\'etriques de base $U$ \cite[déf. 1.14]{ivorra}. On peut alors copier mot pour mot la démonstration citée\footnote{Plus précisément, l'inversibilité des morphismes d'adjonction apparaissant dans cette démonstration se teste sur des générateurs: les motifs des $U$-schémas lisses.}, le point clé 
\[2-\colim_U \DM_\gm^\eff(U,A)\iso \DM_\gm^\eff(K,A)\]
étant \cite[prop. 4.16]{ivorra}.
\end{proof}

\begin{rque}\phantomsection Cette démonstration est légèrement incorrecte pour la raison suivante. Dans \cite{voetri}, $\DM_\gm^\eff$ est défini pour les corps en utilisant les ``relations'' de $\A^1$-invariance et de Mayer-Vietoris pour les recouvrements par deux ouverts de Zariski. Dans \cite[déf. 1.11]{ivorra}, $\DM_\gm^\eff$ est défini pour les schémas de la même manière, mais en remplaçant les recouvrements de Zariski par les recouvrements de Nisnevich élémentaires. Les deux définitions coïncident sur un corps parfait par un théorème hautement non trivial de Voevodsky (cf. \cite[th. 4.4.1 (2)]{birat-tri}). Voici deux manières de contourner ce problème en caractéristique $p>0$:
\begin{itemize}
\item Utiliser la proposition \ref{l3.1} pour étendre la coïncidence précédente aux corps imparfaits, quitte à inverser $p$.
\item Définir des catégories $\DM_\gm^\eff(U)$ ``à la Zariski'' pour $U$ lisse, et vérifier que la démonstration ci-dessus marche dans ce cas, ce qui est immédiat.
\end{itemize}
Dans les deux cas, un sous-produit de la démonstration est une équivalence de catégories
\[S_b^{-1}\DM_\gm^\eff(U)\iso S_b^{-1}\DM_\gm^\eff(K)\]
pour tout modèle lisse $U$ de $K/k$.
\end{rque}

\subsection{Comparaison des adjoints} Soit $A$ une $\Z[1/p]$-algèbre commutative, où $p=1$ ou un nombre premier. Pour tout corps $k$ d'exposant caractéristique $p$, on a des foncteurs \cite[cor. 3.4.2 et th. 4.2.2]{birat-tri}
\[S_b^{-1}\Sm(k)\to \Chow^\o(k,A)\to \DM_\gm^\o(k,A)\]
qui définissent des morphismes de théories motiviques (sur $\corps_p$ si $p\ne 1$, sur $\corps_0$ si $p=1$)
\begin{equation}\label{eq6.4}
S_b^{-1}\Sm \to \Chow^\o(-,A)\to \DM_\gm^\o(-,A).
\end{equation}

\begin{prop}\phantomsection Les morphismes de changement de base associés à \eqref{eq6.4} via les théorèmes \ref{t3}, \ref{t4} et \ref{t5} sont des isomorphismes pour toute extension séparable $K/k$.
\end{prop}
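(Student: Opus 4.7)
The strategy is uniform in the three theories of \eqref{eq6.4}: in each case the adjoint $f_\sharp$ was constructed, as in the second proofs of Theorems \ref{t3}, \ref{t4} and \ref{t5}, via the $2$-colimit recipe of Proposition \ref{p.2colim} indexed over the smooth models $U$ of $K/k$ and the structure morphisms $p_U:U\to \Spec k$. Concretely, if $X\in \sM(K)$ descends to a smooth $U$-scheme $X_U$, then $f_\sharp^\sM X$ is represented by $(p_U)_\sharp X_U$, viewed in $\Sm(k)$ for the first theory and as its motivic image in the other two.

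First I would reduce to checking $w_f^\phi$ on smooth varieties. For $\phi:S_b^{-1}\Sm\to \Chow^\o(-,A)$, every object of $S_b^{-1}\Sm(K)$ is literally a smooth variety, so no reduction is needed. For $\phi:\Chow^\o(-,A)\to \DM_\gm^\o(-,A)$, the full subcategory of $\Chow^\o(K,A)$ on which $w_f^\phi$ is an isomorphism is additive and closed under direct summands (a variant of Lemma \ref{l3.2}); since $\Chow^\o(K,A)$ is the pseudo-abelian envelope of $S_b^{-1}\Corf(K)\otimes A$ by \cite[th. 3.4.1]{birat-tri}, it suffices to treat the generators $h^\o(X)$ for $X$ smooth projective over $K$.

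Next, for such an $X$ with smooth descent $X_U$, both $f_\sharp^\sN\phi_K(X)$ and $\phi_k f_\sharp^\sM(X)$ canonically identify with the image of $(p_U)_\sharp X_U$ in $\sN(k)$, since both theories compute $f_\sharp$ by the same geometric recipe and $\phi$ sends smooth varieties to their motives in a manner independent of the ambient base. The content of the proposition is then that $w_f^\phi(X)$ coincides with this canonical identification.

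The main obstacle is this last verification, which is a matter of $2$-categorical bookkeeping. I would unwind Proposition \ref{p.2colim}: at each finite level $U$, one has compatible adjunctions $((p_U)_\sharp,p_U^*)$ on $\Sm(U)$, $\Corf(U)\otimes A$ and $\DM_\gm^\eff(U,A)$ (the latter two as in \cite{ivorra}), and the relative morphism of motivic theories $\phi_U$ intertwines their units and counits up to canonical $2$-isomorphism. Consequently the $U$-level base change morphism is the identity. Passing to the $2$-colimit over smooth models of $K/k$ then yields the result for $w_f^\phi$, with the extension-of-scalars functors of \cite[prop. 4.12]{ivorra} ensuring that the colimit of identities is the identity.
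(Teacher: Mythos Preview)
Your approach is essentially the same as the paper's: reduce via the $2$-colimit construction to the $U$-level adjoints $(p_U)_\sharp$, and observe that in each of the three theories these adjoints send a smooth $U$-scheme $X$ (or its motive) to itself, so the base change morphism at level $U$ is the identity. The paper's proof is terser---it simply replaces $\Chow^\o$ by $\Corf$ rather than invoking the pseudo-abelian envelope reduction---but your added attention to the $2$-categorical bookkeeping and the explicit reduction to generators are reasonable elaborations of the same argument.
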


\begin{proof} \'Etant donné les démonstrations des trois théorèmes cités, il suffit de le vérifier pour les adjoints relatifs à un modèle lisse $U$ de $K/k$ (en remplaçant $\Chow^\eff$ par $\Corf$). Dans le cas de $S_b^{-1}\Sm$ et de $\Corf$, l'adjoint à gauche envoie un $U$-schéma-lisse $X$ sur lui-même; c'est encore le cas pour le motif de $X$ dans $\DM_\gm^\eff(U)$.
\end{proof}

\enlargethispage*{30pt}

\section{Non existence d'adjoints}\label{s7}

Apr\`es avoir montr\'e l'existence d'adjoints dans des situations vari\'ees, il peut \^etre
instructif de donner des exemples de non existence. En voici de deux sortes, avec des
cat\'egories de motifs purs:

\subsection{Motifs purs num\'eriques, extension alg\'ebrique s\'eparable infinie} On a: 

\begin{lemme}Si $f:k\to K$
est une telle extension, l'adjoint \`a gauche $f_\sharp$ de $f^*:\Mot_\num(k)\to \Mot_\num(K)$
n'est pas d\'efini en $\un\in \Mot_\num(K)$. 
\end{lemme}

\begin{proof} Pour simplifier, supposons que $K$ soit la cl\^oture s\'eparable de $k$ (on pourrait
g\'en\'eraliser l'argument). Il s'agit de voir que le foncteur
\[\Mot_\num(k)\ni M\mapsto \Mot_\num(K)(\un,f^* M)\]
n'est pas repr\'esentable. Pour $M=h(X)$ ($X$ projective lisse), le terme de droite est
$A_0^\num(X_K)$. Pour $X$ de dimension z\'ero, soit $X = \Spec E$ avec $E$ \'etale sur $k$, cet
espace vectoriel est de dimension $[E:k]$. Si le foncteur \'etait repr\'esentable par un objet
$N$, on aurait donc, pour tout tel $E$
\[\dim \Mot_\num(k)(N,h(\Spec E)) = [E:k].\]

Or $\Mot_\num(k)(N,h(\Spec E))=\Mot_\num(k)(h_0(N),h(\Spec E))$. Si on \'ecrit $h_0(N)$ comme
facteur direct de $h(\Spec F)$ o\`u $F$ est une $k$-alg\`ebre \'etale, cet espace vectoriel est
facteur direct de
\[\Mot_\num(k)(h(\Spec F),h(\Spec E))=\Mot_\num(k)(\un,h(\Spec (E\otimes_k F))\]
et le membre de droite est de dimension \'egale au nombre de facteurs de l'alg\`ebre \'etale
$E\otimes_k F$. 

Choisissons pour $E$ une extension galoisienne de $k$ qui contient tous les facteurs de $F$. On
a alors $E\otimes_k F \simeq E^{[F:k]}$, donc 
\begin{multline*}
[E:k]=\dim \Mot_\num(k)(N,h(\Spec E))\\
\le \dim\Mot_\num(k)(h(\Spec F),h(\Spec E))=[F:k]
\end{multline*} 
donc $[E:k]$ serait born\'e, ce qui est absurde.
\end{proof}

\subsection{Motifs de Chow effectifs, extension r\'eguli\`ere infinie} 
 Commen\c cons par deux lemmes:
 
 \begin{lemme}\phantomsection\label{l10.1} a) Soit $A$ une alg\`ebre semi-primaire sur un corps $F$: le
radical $R$ de $A$ est nilpotent et $A/R$ est semi-simple. Soit $(e_i)$ une famille
d'idempotents de $A$, orthogonaux et de somme $1$, telle que leurs images $\bar e_i\in A/R$
soient \emph{centrales}. Soit d'autre part $\epsilon$ un idempotent de $A$. Alors il existe une
d\'ecomposition $\epsilon = \sum_i \epsilon_i$ en somme d'idempotents orthogonaux et un
\'el\'ement $r\in R$ tel que, pour tout $i$, $\epsilon_i$ divise $(1+r)e_i(1+r)^{-1}$
(c'est-\`a-dire que $(1+r)e_i(1+r)^{-1}-\epsilon_i$ est idempotent).\\
 b) Soit $\sA$ une cat\'egorie $F$-lin\'eaire pseudo-ab\'elienne, o\`u $F$ est un corps. Soit
$M\in \sA$, tel que $A=\End(M)$ soit une $F$-alg\`ebre primaire. Soit $M=\bigoplus M_i$ une
d\'ecomposition de $M$ en somme directe, et soit $N$ un facteur direct de $M$. Si les
idempotents de $A$ de noyaux $M_i$ sont \emph{centraux modulo $R$} (radical de $A$), alors il
existe une d\'ecomposition $N=\bigoplus N_i$, o\`u chaque $N_i$ est isomorphe \`a un facteur
direct de
$M_i$.
  \end{lemme}
 
  \begin{proof} a) Soient $B=\epsilon A\epsilon$ et $C=(1-\epsilon) A(1-\epsilon)$. Les
$F$-alg\`ebres $B$ et $C$ sont primaires, d'unit\'es respectives $\epsilon$ et $(1-\epsilon)$
et de radicaux respectifs $\epsilon R\epsilon$ et $(1-\epsilon) R(1-\epsilon)$. Si $\bar
\epsilon$ est l'image de $\epsilon$ dans $A/R$, les $\bar\epsilon \bar e_i$ sont des
idempotents orthogonaux de somme $\bar\epsilon$ dans $B/\epsilon R\epsilon$, et les
$(1-\bar\epsilon) \bar e_i$ sont des idempotents orthogonaux de somme $1-\bar\epsilon$ dans
$C/(1-\epsilon) R(1-\epsilon)$. Relevons-les respectivement en $(\epsilon_i)$ et $(\eta_i)$,
syst\`emes d'idempotents orthogonaux de somme $\epsilon$ et $1-\epsilon$ dans $B$ et $C$
\cite[lemma 5.4]{jannsen2}. On a $\epsilon_i\eta_j = \eta_j\epsilon_i =0$ pour tout $(i,j)$,
donc les $\epsilon_i+\eta_i$ forment un syst\`eme d'idempotents orthogonaux de somme $1$,
relevant les $\bar e_i$. En r\'eappliquant \cite[lemma 5.4]{jannsen2}, on voit qu'il existe
donc $n\in R$ tel que $\epsilon_i+\eta_i=(1+r)e_i(1+r)^{-1}$ pour tout $i$.
 
 b) C'est une simple traduction de a).
  \end{proof}

\begin{lemme}\phantomsection\label{l7.2} Soit $k$ un corps alg\'ebriquement clos, et soit $A\ne 0$ une vari\'et\'e
ab\'elienne sur $k$. Si $k$ n'est pas la cl\^oture alg\'ebrique d'un corps
fini, alors  $A(k)\otimes\Q \ne 0$.
\end{lemme}

\begin{proof} Ce lemme bien connu a un grand nombre de d\'e\-mons\-tra\-tions non triviales. 
L'une utilise les hauteurs, et une autre le th\'eor\`eme de sp\'ecialisation de N\'eron: 
voir \cite[appendice]{roy-wald} pour le cas où $k=\bar \Q$ (les arguments passent au cas o\`u $k$ est la clôture algébrique d'un corps $k_0(C)$ pour $C$ une courbe). On peut aussi raisonner en utilisant la partie facile 
du crit\`ere de N\'eron-Ogg-\v Safarevi\v c, etc.
\end{proof}

Soit maintenant $f:k\to K$ une extension r\'eguli\`ere infinie:  Nous allons montrer que
l'adjoint \`a gauche $f_\sharp$ de $f^*:\Chow(k,\Q)\to \Chow(K,\Q)$ n'existe pas en
g\'en\'eral.  Il suffit de montrer:

\begin{thm}\phantomsection\label{p10.1} Soient $k$ un corps alg\'ebriquement clos, $C$ une cour\-be projective
lisse de genre $g>0$ sur $k$ et $K=k(C)$. Supposons que $k$ ne soit pas la cl\^oture
alg\'ebrique d'un corps fini. Alors $f_\sharp$ n'est pas d\'efini en $\un\in
\Chow(K,\Q)$, ni en $\un\in\Chow^\eff(K,\Q)$.
\end{thm}

(Pour
attraper le cas de genre z\'ero, on peut ensuite projeter $C$ sur $\P^1$; si on note
$k\by{f}k(\P^1)\by{g}k(C)$ les extensions correspondantes, $f_\sharp$ n'est pas d\'efini en
$g_\sharp \un$, autrement sa valeur d\'efinirait $(gf)_\sharp\un$.)

\begin{proof} Pour simplifier, laissons tomber les coefficients $\Q$. Pour a), il s'agit de voir que le foncteur
\[\Chow(k)\ni M\mapsto \Chow(K)(\un,f^* M)\]
n'est pas corepr\'esentable. Soit $n\ge 0$. Pour $M=h(X)(-n)$ ($X$ projective lisse), le terme de droite est
$CH_n(X_K)\otimes\Q$. Si le foncteur \'etait corepr\'esentable par un objet
$N$, on aurait donc un isomorphisme fonctoriel en $X$
\[CH_n(X_K)\otimes\Q\simeq \Chow(k)(N,h(X)(-n)).\]

Soit $\phi\in\Chow(K)(\un,h(C_K))=CH_0(C_K)$ donn\'e par le point g\'e\-n\'e\-ri\-que
de $C$. Ce morphisme donne par adjonction un morphisme
\[\tilde\phi: N\to h(C)\]
qui induit la surjection fonctorielle en $X$
\begin{multline*}\Chow(k)(h(C),h(X)(-n))\simeq CH_{n+1}(C\times X)\otimes\Q\\
\surj CH_n(X_K)\otimes\Q\simeq \Chow(k)(N,h(X)(-n)).
\end{multline*}

Par le lemme de Yoneda, cela montre que $\tilde\phi$ est scind\'ee, donc que $N$ est
\emph{facteur direct} de $h(C)$. En particulier, $N$ est \emph{effectif} s'il existe; dans ce cas, il définit donc aussi $f_\sharp\un$ dans $\Chow^\eff(k)$.

Soit $h(C)=h_0(C)\oplus h_1(C)\oplus h_2(C)$ une d\'ecomposition de Chow-K\"unneth
d\'etermin\'ee par un point rationnel de $C$. On sait que $\End(h(C))$ est une $\Q$-alg\`ebre
primaire, et que les idempotents d\'efinissant la d\'e\-com\-po\-si\-tion ci-dessus sont
centraux modulo le radical. En appliquant le lemme \ref{l10.1} b), on peut donc \'ecrire
$N=N_0\oplus N_1\oplus N_2$, o\`u $N_i$ est isomorphe \`a un facteur direct de $h_i(C)$.

Rappelons que $h_0(C)=\un$ et $h_2(C)=\bL$. Nous utiliserons les formules
\begin{align*}
\Chow(k)(\un,\bL)&=\Chow(k)(\bL,\un)=0\\
\Chow(k)(\un,h_1(A))&=\Chow(k)(h_1(A),\bL)=A(k)\otimes\Q\\
\Chow(k)(\bL,h_1(A))&=\Chow(k)(h_1(A),\un)=0
\end{align*}
si $A/k$ est une vari\'et\'e ab\'elienne.

Pour $X=\Spec k$, on obtient
\[\Chow(k)(N_0,\un)=\Chow(k)(N,\un)\iso \Q\]
ce qui montre que $N_0=\un$.

En appliquant l'adjonction \`a $h_1(B)$, o\`u $B$ est une vari\'et\'e
ab\'elienne, on obtient aussi
\[\Chow(k)(N,h_1(B)) = \Chow(K)(\un,h_1(B_K))=B(K)\otimes \Q\]
et donc
\[\Chow(k)(N_1,h_1(B))=\left(B(K)/B(k)\right)\otimes\Q =
\Chow(k)(h_1(C),h_1(B)).\] 

Comme cet isomorphisme est compatible \`a l'inclusion $N_1\inj h_1(C)$, cela implique
par Yoneda que $N_1 = h_1(C)$.

En appliquant enfin l'adjonction \`a $\bL\in \Chow(k)$, on obtient
\begin{multline*}
\Chow(k)(N_1,\bL)\oplus \Chow(k)(N_2,\bL)=\Chow(k)(N,\bL)\\ =
\Chow(K)(\un,\bL)=0
\end{multline*}
d'o\`u $N_2=0$. Cela donne \'egalement
\[\Chow(k)(N_1,\bL)=\Chow(k)(h_1(C),\bL)\allowbreak=\Pic^0(C)\otimes\Q=0\]
ce qui contredit $g>0$ si $k$ n'est pas la cl\^oture alg\'ebrique d'un corps fini (lemme \ref{l7.2}).\end{proof}

En revanche:

\enlargethispage*{20pt}

\begin{prop}\phantomsection\label{p7.1} Supposons que $k$ soit alg\'ebrique sur un corps fini.\\
a) Supposons vraie la conjecture de Beilinson (et Tate): $\Chow^\eff(k)\iso \Mot_\num^\eff(k)$. Alors 
 $f_\sharp \un$ existe pour toute extension de type fini $K/k$.\\
b) Si $\degtr(K/k)=1$, $f_\sharp\un$ existe inconditionnellement, et m\^eme dans $\Chow(k)$.
\end{prop}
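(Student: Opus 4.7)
\emph{R\'eductions pr\'eliminaires.} Pour toute extension de type fini $K/k$, soit $k'$ la cl\^oture alg\'ebrique de $k$ dans $K$: il est classique que $k'/k$ est finie et que $K/k'$ est r\'eguli\`ere (puisque $k'$ est alg\'ebriquement clos dans $K$ et que $k'$, extension finie du corps parfait $k$, est parfait). En factorisant $f:k\to K$ comme $k\to k'\to K$, le th\'eor\`eme \ref{t3.1} traite la partie finie (dans $\Chow$ comme dans $\Mot_\num$), et il suffit de traiter le second facteur: on peut donc supposer $K/k$ r\'eguli\`ere, donc primaire.

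\emph{Partie a).} Le th\'eor\`eme \ref{t1} fournit $f_\sharp\un\in \Mot_\num^\eff(k,\Q)$ pour toute extension primaire $K/k$. Sous la conjecture de Beilinson, la projection $\Chow^\eff(k,\Q)\to \Mot_\num^\eff(k,\Q)$ est une \'equivalence de cat\'egories, compatible \`a l'extension des scalaires (les deux foncteurs $f^*$ proviennent de la m\^eme op\'eration g\'eom\'etrique sur les cycles). On transporte donc $f_\sharp\un$ dans $\Chow^\eff(k,\Q)\subset\Chow(k,\Q)$; la composition avec l'adjoint pour $k'/k$ (qui existe dans $\Chow$ par le th\'eor\`eme \ref{t3.1}) donne $f_\sharp\un$ dans $\Chow(k,\Q)$.

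\emph{Partie b).} Apr\`es la r\'eduction pr\'ec\'edente, on peut supposer $K=k(C)$ pour une $k$-courbe $C$ projective lisse (le cas $C=\P^1$ se ram\`enent \`a des extensions purement transcendantes). Le candidat pour $f_\sharp\un$ est
\[N:=\un\oplus h_1(C)\in \Chow^\eff(k,\Q)\subset \Chow(k,\Q),\]
qui existe sans hypoth\`ese de point rationnel gr\^ace \`a la d\'ecomposition de Chow--K\"unneth $h(C)=\un\oplus h_1(C)\oplus \bL$ obtenue \`a partir d'un $0$-cycle effectif de degr\'e positif et de l'inversion de ce degr\'e dans $\Q$. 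Cette forme est dict\'ee par l'analyse de la d\'emonstration du th\'eor\`eme \ref{p10.1}: tout corepr\'esentant de $M\mapsto \Chow(K)(\un,f^*M)$ doit \^etre un facteur direct de $h(C)$, et la seule configuration compatible avec $\Chow(K)(\un,\bL)=0$ est d'\'eliminer le composant $\bL$. L'unit\'e d'adjonction $\un\to f^*N$ est fournie par le point g\'en\'erique de $C$, vu comme $K$-point tautologique de $C_K$, suivi de la projection $h(C_K)\twoheadrightarrow N_K$.

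\emph{V\'erification.} Il s'agit de prouver que, pour tout $k$-sch\'ema projectif lisse $X$ et tout $n\in\Z$, la fl\`eche d'adjonction induit un isomorphisme
\[\Chow(k)(N,h(X)(-n))_\Q\iso \Chow(K)(\un,h(X_K)(-n))_\Q\simeq CH_n(X_K)_\Q.\]
Le c\oe ur de l'argument est la suite exacte de localisation pour la projection $X\times_k C\to C$:
\[\bigoplus_{c\in C^{(0)}} CH_{n+1}(X_{\kappa(c)})_\Q\to CH_{n+1}(X\times C)_\Q\to CH_n(X_K)_\Q\to 0,\]
combin\'ee au fait cl\'e que $k$ \'etant alg\'ebrique sur $\mathbf{F}_p$, on a $J(C)(k)_\Q=0$ (le point rationnel d'une vari\'et\'e ab\'elienne sur une cl\^oture alg\'ebrique d'un corps fini est de torsion). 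Il en r\'esulte que $\Pic^0(C)_\Q=0$, donc $CH_0(C)_\Q\simeq\Q$, de sorte que toutes les classes de points ferm\'es $[\{c\}]$ sont proportionnelles \`a une classe de degr\'e $1$ fix\'ee $[x_0]$. L'image de la fl\`eche de localisation se r\'eduit alors au sous-groupe engendr\'e par les $\alpha\times [x_0]$ pour $\alpha\in CH_{n+1}(X)_\Q$, qui s'identifie au composant de $CH_{n+1}(X\times C)_\Q$ correspondant \`a $\un\subset h(C)$ dans la d\'ecomposition de Chow--K\"unneth. Le quotient est donc la somme des composants correspondant \`a $h_1(C)\oplus \bL\subset h(C)$; en r\'eorganisant \`a l'aide de la projection $h(C)\twoheadrightarrow N$ via le point $x_0$ (qui identifie le composant $\bL$ avec un morceau de la partie $\un$ apr\`es twist), ce quotient co\"\i ncide avec $\Chow(k)(N,h(X)(-n))_\Q$.

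\emph{Principale difficult\'e.} L'\'etape d\'elicate est l'identification pr\'ecise entre l'image g\'eom\'etrique de la fl\`eche de localisation et un composant sp\'ecifique de la d\'ecomposition motivique, qui demande de suivre attentivement l'action des projecteurs $\pi_i\in\End(h(C))$ construits \`a partir du $0$-cycle $x_0$. La naturalit\'e en $M$ de l'isomorphisme r\'esulte ensuite formellement de sa construction via la m\^eme unit\'e $\un\to f^*N$, qui est ind\'ependante de $X$.
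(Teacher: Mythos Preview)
Your preliminary reduction to the regular case is fine, and your part b) follows essentially the same route as the paper: the candidate $N=\un\oplus h_1(C)$, the localization sequence for $X\times C$, and the crucial vanishing $\Pic^0(C)_\Q=0$ over a field algebraic over a finite field. Your exposition of the identification of the image of the localization map is a bit loose (you should note that one needs $\Pic^0(C_E)_\Q=0$ for \emph{all} finite extensions $E/k$, not just $k$ itself, since the closed points of $C$ have residue fields $\kappa(c)\ne k$ in general; the paper makes this explicit by rewriting the left-hand term as $\bigoplus_{[E:k]<\infty} CH^1(C_E)\otimes CH^{d-n-1}(X_E)$ with a transfer), but the idea is correct.

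Part a), however, has a genuine gap. You transport the numerical adjoint $f_\sharp^{\num}\un$ through the equivalence $\Chow^\eff(k)\simeq\Mot_\num^\eff(k)$, but the Beilinson conjecture is assumed only over $k$, not over $K$. The functor you need to corepresent is $M\mapsto \Chow^\eff(K)(\un,f^*M)=CH_0(M_K)_\Q$, and your argument only identifies $\Chow^\eff(k)(N,M)$ with $\Mot_\num^\eff(K)(\un,f^*\bar M)=A_0^\num(M_K)_\Q$; these differ as soon as $M=h(X)$ for $X$ a curve of positive genus. Worse: since $f^*$ is fully faithful on numerical motives for a primary extension, one has $f_\sharp^{\num}\un=\un$, so your transported object is $N=\un$, which is visibly wrong (compare with the correct answer $\un\oplus h_1(C)$ you find in b) when $\degtr=1$).

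The paper circumvents this by using a \emph{different} unconditional adjoint, namely the birational one $f_\sharp^\o:\Chow^\o(K)\to\Chow^\o(k)$ of th\'eor\`eme~\ref{t4}. Under Beilinson for $k$, the projection $\Chow^\eff(k)\to\Chow^\o(k)$ acquires a left adjoint $i$ (transported from the numerical one of corollaire~\ref{c2}), and one checks directly on $h(X)$ that $if_\sharp^\o\un$ corepresents the desired functor: the key point is that $\Chow^\o(K)(\un,h^\o(X_K))=CH_0(X_K)_\Q=\Chow^\eff(K)(\un,h(X_K))$ holds \emph{unconditionally} over $K$, so no hypothesis on $K$ is needed.
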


\begin{proof} a)
Le foncteur $\Chow^\eff(k)\to \Chow^\o(k)$ a un adjoint \`a gauche
$i$ (celui qui existe sans hypoth\`ese sur $k$ au niveau des motifs num\'eriques, cf.
corollaire \ref{c2}). En testant sur
$h(X)$ pour $X/k$ projective lisse, on voit imm\'ediatement que le motif $if_\sharp^\o\un$ fait
l'affaire, o\`u $f_\sharp^\o$ est l'adjoint \`a gauche du th\'eor\`eme \ref{t4}.

b) L'\'enonc\'e signifie que le foncteur $\Chow(k)\to \Vec_\Q$ donn\'e par $N\mapsto \Chow(\un,N_K)$ est corepr\'esentable (sans supposer vraie la conjecture de Beilinson). Comme tout motif de $\Chow(k)$ est de la forme $M(-n)$ pour $M\in \Chow^\eff(k)$ et $n\in\N$, il suffit de montrer que $f_\sharp\bL^n$ est repr\'esentable dans $\Chow^\eff(k)$ pour tout $n\ge 0$. 

Un calcul analogue \`a celui
de la d\'emonstration de la proposition \ref{p10.1} (en testant sur les motifs du type
$\bL^n,h(A)(n)$ et $\bL^{n+1}$) montre que, n\'ecessairement, on a
$f_\sharp\bL^n=(f_\sharp\un)(n)=(\un\oplus h_1(C))(n)$. Il suffit de voir que,  pour toute vari\'et\'e projective lisse $X$ de dimension $d$, l'application naturelle
\begin{multline*}
\Chow^\eff(k)((\un\oplus h_1(C))(n),h(X))\\
\to  \Chow^\eff(k)(\bL^n,h(X_K)) = CH^{d-n}(X_K)\otimes \Q
\end{multline*}
est bijective.

Le membre de gauche est facteur direct de
\[\Chow^\eff(k)(h(C)(n),h(X))=CH^{d-n}(C\times X)_\Q.\]

La suite exacte de localisation
\[\bigoplus_{c\in C} CH^{d-n-1}(X_{k(c)})\to CH^{d-n}(C\times X)\to CH^{d-n}(X_K)\to 0\]
induit une suite exacte
\[\bigoplus_{[E:k]<\infty} CH^1(C_E)\otimes CH^{d-n-1}(X_E)\by{(\alpha_E)} CH^{d-n}(C\times X)\to CH^{d-n}(X_K)\to 0\]
o\`u $\alpha_E$ est le produit d'intersection suivi du transfert. En appliquant la d\'ecomposition motivique de $C$, on trouve une suite exacte
\begin{multline*}
\bigoplus_{[E:k]<\infty} \Pic^0(C_E)_\Q\otimes CH^{d-n-1}(X_E)_\Q\by{(\alpha_E)} \Chow^\eff(k)(h_1(C)(n),h(X))\\
\to CH^{d-n}(X_K)_\Q\to 0.
\end{multline*}

Mais $\Pic^0(C_E)$ est de torsion pour tout $E$.
\end{proof}

\enlargethispage*{60pt}

\section{Quelques calculs}\label{s8}

Dans cette section, \emph{tous les motifs sont à coefficients rationnels}. On abrège $\Chow(k,\Q)$ en $\Chow(k)$, etc.

\subsection{Motifs purs birationnels}

\begin{prop}\phantomsection\label{p8.1} Soit $C$ une courbe projective lisse, g\'e\-o\-m\'e\-tri\-que\-ment connexe sur un corps parfait $k$, 
et $K=k(C)$. Consid\'erons le foncteur $f_\sharp:\Chow^\o(K)\to \Chow^\o(k)$ du 
th\'eor\`eme \ref{t4}. Alors:\\
a) On a $f_\sharp \un=\un\oplus h_1^\o(C)$.\\
b) Soit $\Gamma$ une courbe projective lisse connexe sur $K$, et soit 
$S$ une surface projective lisse géométriquement connexe sur $k$, fibr\'ee sur $C$ de fibre g\'en\'erique $\Gamma$. Soit $J$ la jacobienne de $\Gamma$. Alors on a une suite exacte scind\'ee
\begin{equation}\label{eq11.0}
0\to t_2^\o(S)\to f_\sharp h_1^\o(\Gamma)\to h_1^\o(\IM_{K'/k} J)\to 0
\end{equation}
naturelle en $\Gamma$ (pour les correspondances birationnelles), o\`u $K'$ est le corps des constantes de $\Gamma$.
\end{prop}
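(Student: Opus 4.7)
L'id\'ee commune est l'identification $f_\sharp h^\o(X) \simeq h^\o(\sX)$ pour tout mod\`ele $k$-projectif lisse $\sX$ d'une $K$-vari\'et\'e projective lisse $X$. Elle d\'ecoule directement de l'adjonction et de la formule \eqref{eq6.3} : pour tout $Y \in \Sm^\proj(k)$ on a
\[\Chow^\o(K)(h^\o(X), f^*h^\o(Y)) = CH_0(Y_{K(X)})_\Q = CH_0(Y_{k(\sX)})_\Q = \Chow^\o(k)(h^\o(\sX), h^\o(Y)).\]
Ceci est fonctoriel en $Y$, et comme les $h^\o(Y)$ engendrent $\Chow^\o(k)$ au sens karoubien, Yoneda donne l'isomorphisme. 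La partie a) en r\'esulte en prenant $X = \Spec K$ et $\sX = C$, d'o\`u $f_\sharp \un \simeq h^\o(C) = \un \oplus h_1^\o(C)$.

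Pour b), on applique ce qui pr\'ec\`ede \`a $X = \Gamma$ et $\sX = S$ (l'\'egalit\'e $K(\Gamma) = k(S)$ provenant de ce que $\Gamma$ est la fibre g\'en\'erique de $S\to C$), d'o\`u $f_\sharp h^\o(\Gamma) \simeq h^\o(S)$. On d\'ecompose alors les deux membres. Dans $\Chow^\o(K)$, la d\'ecomposition de Chow-K\"unneth de la courbe $\Gamma$ (de corps des constantes $K'$) s'\'ecrit $h^\o(\Gamma) = h(\Spec K') \oplus h_1^\o(\Gamma)$, le terme $h_2$ \'etant proportionnel \`a $\bL$ donc nul birationnellement. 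Appliquant $f_\sharp$ et utilisant a) pour l'extension compos\'ee $K'/k$ (de mod\`ele $C'$ obtenu par la factorisation de Stein $S\to C'\to C$), on trouve
\[h^\o(S) \simeq h^\o(C') \oplus f_\sharp h_1^\o(\Gamma) = \un \oplus h_1^\o(C') \oplus f_\sharp h_1^\o(\Gamma).\]

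Parall\`element, la d\'ecomposition raffin\'ee d'une surface dans $\Chow^\o(k)$ donne $h^\o(S) = \un \oplus h_1^\o(S) \oplus t_2^\o(S)$, les composantes $h_2^{\mathrm{alg}}, h_3, h_4$ \'etant des twists de Tate, donc nulles. Par l'analyse classique de la fibration $S\to C'$ (Lang-N\'eron et th\'eorie de la jacobienne relative), le noyau du morphisme surjectif $\Alb(S) \twoheadrightarrow \Alb(C')$ est, \`a isog\'enie pr\`es, la $K'/k$-trace de la jacobienne $J$ du fibr\'e g\'en\'erique $\Gamma_{K'}$, laquelle est isog\`ene \`a $\IM_{K'/k} J$ par autodualit\'e de $J$. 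On en d\'eduit $h_1^\o(S) \simeq h_1^\o(C') \oplus h_1^\o(\IM_{K'/k} J)$. En confrontant les deux expressions de $h^\o(S)$ et en simplifiant $\un$ et $h_1^\o(C')$ (op\'eration licite car $\End(\un) = \Q$, la sous-cat\'egorie engendr\'ee par les $h_1$ d'ab\'eliennes est semi-simple, et les Hom entre celle-ci et $t_2^\o(S)$ s'annulent par consid\'eration de poids), on obtient
\[f_\sharp h_1^\o(\Gamma) \simeq t_2^\o(S) \oplus h_1^\o(\IM_{K'/k} J),\]
ce qui scinde \eqref{eq11.0}. Les fl\`eches de la suite s'obtiennent comme compositions de projections et d'inclusions canoniques dans $h^\o(S) = f_\sharp h^\o(\Gamma)$, la naturalit\'e en $\Gamma$ pour les correspondances birationnelles d\'ecoulant de la fonctorialit\'e de toutes les constructions en jeu.

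Les points principalement d\'elicats sont l'identification g\'eom\'etrique pr\'ecise du noyau de $\Alb(S)\to \Alb(C')$ via Lang-N\'eron et la conversion trace/image par dualit\'e, ainsi que la v\'erification que les simplifications de type Krull-Schmidt sont l\'egitimes dans $\Chow^\o$ malgr\'e sa non semi-simplicit\'e globale ; ce dernier point repose sur la pleine fid\'elit\'e du foncteur $\Ab\otimes\Q\to\Chow^\o$ restreint aux $h_1$ et sur l'orthogonalit\'e motivique entre les pi\`eces $h_1^\o$ et $t_2^\o$.
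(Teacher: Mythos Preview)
Your approach follows the same overall strategy as the paper's: identify $f_\sharp h^\o(\Gamma)$ with $h^\o(S)$, decompose both sides via Chow--K\"unneth, and invoke the Lang--N\'eron/Hindry--Pacheco structure of $\Pic^0(S)$. The substantive difference lies in how the pieces are matched. The paper does not cancel common summands by Krull--Schmidt; instead it exploits the one-directional vanishing $\Chow^\o(k)(h_i^\o(X),h_j^\o(Y))=0$ for $i>j$ (\cite[Cor.~7.8.5]{kmp}) to equip each $h^\o(X)$ with a \emph{canonical filtration} $0\subset h^\o_{>1}(X)\subset h^\o_{>0}(X)\subset h^\o(X)$, functorial for all birational correspondences. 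Applying $f_\sharp$ to the split exact sequence $0\to h_1^\o(\Gamma)\to h^\o(\Gamma)\to h_0^\o(\Gamma)\to 0$ and comparing with the filtration of $h^\o(S)$ via a short diagram chase then yields \eqref{eq11.0} with its canonical maps, and naturality comes for free from functoriality of the filtration. Your cancellation argument does give the correct isomorphism class (the indecomposable summands of $\un\oplus h_1^\o(C')$ have local endomorphism rings, so Krull--Schmidt applies), but Krull--Schmidt alone produces only an abstract isomorphism; the passage to the \emph{specific} maps of \eqref{eq11.0} and their naturality in $\Gamma$ still requires the one-directional vanishing, which is essentially the paper's filtration argument in disguise. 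Note also that your ``orthogonalit\'e entre $h_1^\o$ et $t_2^\o$ par consid\'eration de poids'' holds only in one direction: $\Hom(t_2^\o,h_1^\o)=0$, but $\Hom(h_1^\o,t_2^\o)$ need not vanish in $\Chow^\o$. Fortunately that one direction is exactly what is needed.
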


\begin{rque}\phantomsection \label{r8.1} En utilisant la proposition \ref{p5.1} et  le corollaire \ref{c2}, on voit que pour le morphisme de théories motiviques $\Mot_\rat^\o\to \Mot_\num^\o$, l'effet du morphisme de changement de base sur $\un$ (resp. $h_1(\Gamma)$) est
\[\un\inj \un\oplus h_1^\o(C)\quad  (\text{resp. } h_1^\o(\IM_{K'/k} J)\inj h_1^\o(\IM_{K'/k} J)\oplus t_2^\o(S))\]
ce qui illustre spectaculairement la proposition \ref{p3.1} et fournit un scindage canonique de \eqref{eq11.0} dans $\Mot_\num^\o$. Ce dernier point n'est pas surprenant, $\Mot_\num^\o$ étant semi-simple et les termes de poids différents.
\end{rque}

\begin{proof} a) C'est clair, puisque $f_\sharp\un = f_\sharp h^\o(\Spec K) = h^\o(C)$ d'apr\`es 
la preuve du th\'eor\`eme \ref{t4}, et que $h^\o(C)=\un\oplus h_1^\o(C)$.

b) De m\^eme, on a 
\[f_\sharp h^\o(\Gamma)=h^\o(S)=\un\oplus h_1^\o(S)\oplus t_2^\o(S).\]

Pour clarifier les calculs qui suivent, rappelons la formule suivante \cite[Cor. 7.8.5 (ii)]{kmp}: si $X,Y\in \Sm^\proj(k)$ avec $\dim X,\dim Y\le 2$, alors
\begin{equation}\label{eq11.5}
\Chow^\o(k)(h_i^\o(X),h_j^\o(Y))=0\text{ si } i>j.
\end{equation}

Ceci montre que $h^\o(X)$ est muni d'une \emph{filtration canonique}
\begin{equation}\label{eq11.1}
0\subset h_{> 1}^\o(X)\subset h_{> 0}^\o(X)\subset h^\o(X)
\end{equation}
o\`u les ``inclusions'' sont des monomorphismes scind\'es, de suppl\'ementaires successifs
\[h^\o_2(X)=t_2^\o(X), h^\o_1(X), h^\o_0(X).\]

La filtration \eqref{eq11.1} est fonctorielle en $X$ (pour les correspondances birationnelles) et ne d\'epend pas du choix d'une d\'ecomposition de Chow-K\"unneth de $X$.

Appliquons maintenant $f_\sharp$ \`a cette filtration pour $h^\o(\Gamma)$, qu'on peut \'ecrire comme une suite exacte scind\'ee
\[0\to h_1^\o(\Gamma)\to h^\o(\Gamma)\to h_0^\o(\Gamma)\to 0.\]

\enlargethispage*{20pt}

On obtient ainsi une suite exacte scind\'ee
\[0\to f_\sharp h_1^\o(\Gamma)\to f_\sharp h^\o(\Gamma)\to f_\sharp h_0^\o(\Gamma)\to 0\]
soit
\begin{equation}\label{eq11.2}
0\to f_\sharp h_1^\o(\Gamma)\to h^\o(S)\to h^\o(C')\to 0
\end{equation}
o\`u $C'$ est la courbe projective lisse de mod\`ele $K'$. Comme $k(S)/k$ est r\'eguli\`ere, on a $h_0^\o(S)\iso h_0^\o(C') = \un$. Par fonctorialit\'e de la filtration \eqref{eq11.1}, on d\'eduit donc de \eqref{eq11.2} une suite exacte scind\'ee
\[0\to f_\sharp h_1^\o(\Gamma)\to h^\o_{>0}(S)\to h^\o_{>0}(C')\to 0
\]
qu'on peut ins\'erer dans un diagramme commutatif de suites exactes scind\'ees
\[\begin{CD}
&&&& 0\\
&&&& @VVV\\
&&&& t_2^\o(S)\\
&&&& @VVV\\
0@>>> f_\sharp h_1^\o(\Gamma)@>>> h^\o_{>0}(S)@>>> h^\o_{>0}(C')@>>> 0.\\
&&&&@VVV @V\wr VV\\
&&&&h_1^\o(S)@>>> h_1^\o(C')\\
&&&&@VVV \\
&&&&0
\end{CD}\]

D'apr\`es la suite exacte \`a isog\'enie pr\`es \cite[prop. 3.4]{hp}
\[0\to \Pic^0_{C'/k}\to \Pic^0_{S/k}\to \Tr_{K'/k} J\to 0\]
qu'on dualise, on a une suite exacte scind\'ee
\[
0\to h_1^\o(\IM_{K'/k}J)\to h_1^\o(S)\to h_1^\o(C')\to 0
\]
d'o\`u on d\'eduit imm\'ediatement la suite exacte \eqref{eq11.0}; sa construction montre qu'elle est fonctorielle pour l'action des correspondances de Chow (birationnelles).
\end{proof}

\subsection{Le motif de Tate-\v Safarevi\v c}

\begin{prop}\phantomsection\label{c8.1} Soit $A$ une $K$-vari\'et\'e ab\'elienne. Alors on a une suite exacte scind\'ee dans $\Chow^\o(k)$, naturelle en $A$ (pour les homomorphismes de variétés abéliennes)
\begin{equation}\label{eq11.3}
0\to \sha^\o(A,K/k)\to f_\sharp h_1^\o(A)\to h_1^\o(\IM_{K/k} A)\to 0
\end{equation}
o\`u $\sha^\o(A,K/k)$ est facteur direct de $t_2^\o(S)$ pour une surface $S$ convenable. 
\end{prop}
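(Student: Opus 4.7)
The strategy is to reduce to Proposition \ref{p8.1}(b) by realising $A$ as an isogeny factor of the Jacobian of a suitable curve over $K$.

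First, I would choose a geometrically connected smooth projective curve $\Gamma$ over $K$ together with a surjective homomorphism $J \twoheadrightarrow A$ from its Jacobian. One way: embed $A$ in a projective space via a polarisation, and let $\Gamma$ be a sufficiently general smooth complete intersection of hyperplane sections passing through a $K$-point; the Abel--Jacobi map produces the desired surjection. With $\Q$-coefficients, the kernel (up to isogeny) yields a complementary abelian variety $B$, so that $J \sim A \times B$. Since $h_1^\o$ factors through the isogeny category of abelian varieties (fully faithfully embedded in $\Chow^\o$) and is additive, this gives a direct sum decomposition $h_1^\o(\Gamma) = h_1^\o(J) = h_1^\o(A) \oplus h_1^\o(B)$ in $\Chow^\o(K)$. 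Since $\IM_{K/k}$ is also additive on isogeny classes, we obtain a parallel decomposition $h_1^\o(\IM_{K/k} J) = h_1^\o(\IM_{K/k} A) \oplus h_1^\o(\IM_{K/k} B)$ in $\Chow^\o(k)$.

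Second, I would define the candidate natural transformation abstractly: for each abelian variety $A$ over $K$, let
\[\nu_A : f_\sharp h_1^\o(A) \to h_1^\o(\IM_{K/k} A)\]
be the adjoint under $(f_\sharp,f^*)$ of the morphism $h_1^\o(A) \to h_1^\o((\IM_{K/k} A)_K) = f^* h_1^\o(\IM_{K/k} A)$ coming from the canonical morphism $A \to (\IM_{K/k} A)_K$ (unit of the adjunction defining $\IM_{K/k}$). This construction is manifestly functorial in $A$ for homomorphisms of abelian varieties, so setting $\sha^\o(A,K/k) := \ker(\nu_A)$ automatically provides the required naturality.

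Third, I would apply Proposition \ref{p8.1}(b) to $\Gamma$ (with $K' = K$ by geometric connectedness), obtaining a split exact sequence
\[0 \to t_2^\o(S) \to f_\sharp h_1^\o(\Gamma) \to h_1^\o(\IM_{K/k} J) \to 0\]
for a suitable surface $S$ fibered over $C$ with generic fibre $\Gamma$. The crucial point to check is that the right-hand map agrees with $\nu_J$. Granting this, the decomposition of the first step splits this sequence as a direct sum of analogous sequences for $A$ and for $B$, exhibiting $\sha^\o(A,K/k)$ as a direct summand of $t_2^\o(S)$ and giving the desired split exact sequence.

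The main obstacle is this last identification. The map of Proposition \ref{p8.1}(b) is constructed geometrically, factoring through the projection $h_{>0}^\o(S) \twoheadrightarrow h_1^\o(S)$ and the dual of the Hindry--Pacheco isogeny \cite[prop. 3.4]{hp}, while $\nu_J$ is defined purely by adjunction. To compare them it suffices, by $(f_\sharp,f^*)$-adjunction, to compare two morphisms $h_1^\o(J) \to h_1^\o((\IM_{K/k} J)_K)$ in $\Chow^\o(K)$, i.e.\ two elements of $\Hom(J,(\IM_{K/k} J)_K)\otimes \Q$ (using full faithfulness of $h_1^\o$ on isogeny classes). Both should be the canonical co-image morphism, yielding the required agreement and completing the proof.
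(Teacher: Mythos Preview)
Your approach is essentially correct and close to the paper's, but the tactic for naturality differs in an instructive way. Both proofs choose an ample curve $\Gamma\subset A$ so that $J\twoheadrightarrow A$, apply Proposition~\ref{p8.1}(b), and cut out the piece corresponding to $A$ via the idempotent $\pi\in\End^0(J)$. The difference is this: you define the quotient map $\nu_A:f_\sharp h_1^\o(A)\to h_1^\o(\IM_{K/k}A)$ abstractly by adjunction, which gives functoriality in $A$ for free but forces you to check that $\nu_J$ coincides with the geometric map built in Proposition~\ref{p8.1}(b); you acknowledge this as the ``main obstacle'' and your sketch of the verification is plausible but not fully carried out. The paper avoids this compatibility check entirely: it simply applies the projector $f_\sharp(\pi)$ to the sequence \eqref{eq11.0} (which is already natural in $\Gamma$ for birational correspondences, hence for $\pi$), and then invokes the vanishing $\Hom(t_2^\o,h_1^\o)=0$ from \eqref{eq11.5} to conclude that the resulting two-step filtration on $f_\sharp h_1^\o(A)$ is \emph{unique}, hence independent of the choices of $\Gamma$ and $S$ and automatically functorial in $A$. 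The identification of the quotient with $h_1^\o(\IM_{K/k}A)$ then follows directly from functoriality of $\IM_{K/k}$. Your route is conceptually clean (the target of the sequence is intrinsic from the start), while the paper's route trades the compatibility verification for a uniqueness argument via \eqref{eq11.5}, which is shorter.
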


\begin{proof} Soit $\Gamma$ une courbe ample trac\'ee sur $A$, de sorte que le morphisme
\[J=\Alb(\Gamma)\to \Alb(A)=A\]
soit surjectif. On a donc un \'epimorphisme scind\'e
\[h_1^\o(J)\surj h_1^\o(A)\]
d'o\`u un \'epimorphisme scind\'e
\[f_\sharp h_1^\o(J)\surj f_\sharp h_1^\o(A).\]

Choisissons une surface $S$ comme dans la proposition \ref{p8.1} b). Si $A\to J$ est une section de $J\to A$ \`a isog\'enie pr\`es, de projecteur associ\'e $\pi\in \End^0(J)=\End_{\Chow^\o(K)}(h_1^\o(\Gamma))\subset \End_{\Chow^\o(K)}(h^\o(\Gamma))$, alors le projecteur $f_\sharp(\pi)$ op\`ere sur la suite exacte \eqref{eq11.0}, y d\'ecoupant une suite exacte scind\'ee
\[0\to f_\sharp(\pi) t_2^\o(S)\to  f_\sharp h_1^\o(A)\to f_\sharp(\pi) h_1^\o(\IM_{K/k} J)\to 0
\]
 de la forme \eqref{eq11.3}. Comme $\Hom(f_\sharp(\pi) t_2^\o(S), f_\sharp(\pi) h_1^\o(\IM_{K'/k} J))=0$ par \eqref{eq11.5}, cette filtration est unique; en particulier, elle est ind\'ependante du choix de $S$, puis de $\Gamma$, et est fonctorielle en $A$ (pour les homomorphismes de vari\'et\'es ab\'eliennes).

Il reste \`a identifier $f_\sharp(\pi) h_1^\o(\IM_{K/k} J)$ \`a $h_1^\o(\IM_{K/k} A)$: cela r\'esulte du fait que le projecteur $\pi$ op\`ere (\`a isog\'enie pr\`es) sur $\IM_{K/k} J$ et y d\'ecoupe $\IM_{K/k} A$, par fonctorialité de la $K/K$-image.
\end{proof}

Rappelons maintenant que le foncteur de projection $d_{\le n} \Chow^\eff(k)\to d_{\le n} \Chow^\o(k)$ admet un adjoint à droite $P_n^\sharp$ pour $n\le 2$ \cite[th. 7.8.8]{kmp}.\footnote{Attention! Cela devient faux pour $n\ge 3$, cf. \cite[th. 4.3.2 et 4.3.3]{birat-pure}.} Plus précisément, la même référence donne, avec les notations précédentes:
\[P^\sharp\un =\un,\quad P^\sharp h_1^\o(A) = h_1(A),\quad P^\sharp t_2^\o(S)=t_2(S).\]

On en déduit:

\begin{cor}\phantomsection\label{c8.2} Soit $A$ une $K$-vari\'et\'e ab\'elienne. Alors on a une suite exacte scind\'ee dans $\Chow^\eff(k)$, naturelle en $A$ (pour les homomorphismes de variétés abéliennes)
\[
0\to \sha(A,K/k)\to P^\sharp f_\sharp h_1^\o(A)\to h_1(\IM_{K/k} A)\to 0
\]
o\`u $\sha(A,K/k)$ est facteur direct de $t_2(S)$ pour une surface $S$ convenable. \qed
\end{cor}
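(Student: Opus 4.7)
The plan is to apply the right adjoint $P^\sharp$ of the projection $d_{\le 2}\Chow^\eff(k)\to d_{\le 2}\Chow^\o(k)$, which exists by \cite[th. 7.8.8]{kmp}, to the split short exact sequence \eqref{eq11.3} of Proposition \ref{c8.1}. Being an adjoint, $P^\sharp$ is additive; applied to a split exact sequence whose three terms all lie in its domain, it yields another split exact sequence, which will be the one we want after identification of the pieces.

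The preliminary step is to check that each term of \eqref{eq11.3} lies in $d_{\le 2}\Chow^\o(k)$. The two outer terms do: $\sha^\o(A,K/k)$ is by construction a direct summand of $t_2^\o(S)$ for a surface $S$, and $h_1^\o(\IM_{K/k}A)$ is a direct summand of $h_1^\o(\Gamma)$ for any smooth projective curve $\Gamma$ whose Jacobian surjects onto $\IM_{K/k}A$ (such a $\Gamma$ exists by the very construction used in the proof of Proposition \ref{c8.1}). The splitting of \eqref{eq11.3} then gives an isomorphism $f_\sharp h_1^\o(A)\simeq \sha^\o(A,K/k)\oplus h_1^\o(\IM_{K/k}A)$, so the middle term also lies in $d_{\le 2}\Chow^\o(k)$.

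Applying $P^\sharp$ and invoking the explicit values $P^\sharp h_1^\o(B)=h_1(B)$ for an abelian variety $B$ and $P^\sharp t_2^\o(S)=t_2(S)$ recalled just before the corollary, the right-hand term of the image sequence becomes $h_1(\IM_{K/k}A)$. Setting $\sha(A,K/k):=P^\sharp \sha^\o(A,K/k)$ and using that an additive functor sends direct summands to direct summands, $\sha(A,K/k)$ is a direct summand of $P^\sharp t_2^\o(S)=t_2(S)$, as required. Naturality in $A$ for homomorphisms of abelian varieties is inherited from that of \eqref{eq11.3} via the functoriality of $P^\sharp$. The only step that requires any care is the domain verification at the start; once one knows that $f_\sharp h_1^\o(A)$ lies in $d_{\le 2}\Chow^\o(k)$, the rest is formal manipulation of a split sequence under an additive functor.
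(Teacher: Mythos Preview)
Your proof is correct and follows exactly the approach the paper intends: the paper's own proof is a bare \textit{qed}, the content having been set up in the paragraph preceding the corollary (existence of $P^\sharp$ on $d_{\le 2}$ and its explicit values on $\un$, $h_1^\o$, $t_2^\o$). Your added verification that all three terms of \eqref{eq11.3} lie in $d_{\le 2}\Chow^\o(k)$ is a sensible detail to make explicit, and the rest is indeed formal.
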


\begin{defn}\phantomsection \label{d8.1} On appelle $\sha(A,K/k)$ le \emph{motif de Tate-\v Safarevi\v c} de $A$.
\end{defn}

Soit $H$ une cohomologie de Weil classique (\S \ref{s2.2.2}). Il est connu qu'alors $\dim H^1(X) = 2 \dim \Alb(X)$. Cette condition implique que $H^i(h_2(S))=0$ pour $i\ne 2$ \cite[th. 2A9]{kleiman-dix}. Il en est donc de même pour $\sha(A,K/k)$, qui est ainsi ``de poids $2$'' pour $H$.

Soit $l$ un nombre premier inversible dans $k$, et fixons une clôture séparable $k_s$ de $k$. En composant $\sha(-,K/k)$ avec le foncteur réalisation $l$-adique $\tilde \rho_l$ de la définition \ref{d2.1}, on obtient un foncteur contravariant
\[\Sha_l=\tilde \rho_l\circ \sha(-,K/k):\Ab(K,\Q)\to \Rep_{\Q_l}^*(G_k)\]
où $G_k=Gal(k_s/k)$. On a un autre tel foncteur, covariant:
\[\Sha'_l=V_l(\Sha(-,K k_s)):\Ab(K,\Q)\to \Rep_{\Q_l}^*(G_k)\]
où
\[\Sha(A,K k_s) = \Ker (H^1( k_s(C),A)\to \prod_{v\in C_{(0)}}  H^1( k_s(C)_v,A)) \]
est le \emph{groupe de Tate-\v Safarevi\v c géométrique} de $A\in \Ab(K)$, muni de l'action canonique de $G_k$. Comme remarqué dans \cite[(4.9)]{BrIII}, on peut remplacer ci-dessus les complétés $ k_s(C)_v$ par les hensélisés correspondants, et le produit par une somme directe.

Le théorème suivant justifie la terminologie de la définition \ref{d8.1}:

\begin{thm}\label{t8.1} On a un isomorphisme dans $\Rep_{\Q_l}^*(G_k)$:
\[u_A:\Sha_l(A)\simeq \Sha'_l(A^*)(-1).\]
naturel en $A\in \Ab(K,\Q)$.
\end{thm}

La démonstration dépasserait le cadre de cet article: elle sera donnée ailleurs. Contentons-nous ici de rendre l'énoncé plausible en traitant le cas particulier $A=J(\Gamma)$, où $\Gamma$ est une $K$-courbe projective lisse géométriquement connexe. Soit $S$ un modèle projectif lisse de $\Gamma$ sur $k$, fibré sur $C$ par un morphisme plat $f$. Les calculs de \cite[\S 4]{BrIII}, utilisant la suite spectrale de Leray pour $f$, donnent alors un isomorphisme
\begin{equation}\label{eq8.1}
 \Br(S_{ k_s})\iso \Sha(J(\Gamma),K k_s) 
\end{equation}
dans le quotient de la catégorie des $G_k$-modules par la sous-catégorie de Serre formée des $G_k$-modules finis. Cela fournit $u_A$ dans ce cas, compte tenu de l'isomorphisme 
\[\tilde \rho_l(t_2(S)) \simeq \Coker(\NS(S_{ k_s})\otimes \Q_l(-1)\by{\cl} H^2(S_{ k_s},\Q_l)\simeq  V_l(\Br(S_{ k_s}))(-1)\] 
\cite[Prop. 7.2.3]{kmp}, et de l'autodualité de $J(\Gamma)$. 

Le point délicat est de montrer que $u_A$ s'étend en un isomorphisme naturel sur $\Ab(K,\Q)$. Pour pouvoir procéder comme dans la preuve de la proposition \ref{c8.1}, il faut au moins savoir que \eqref{eq8.1} est compatible à l'action des endomorphismes de $J(\Gamma)$ via le foncteur  $P^\sharp f_\sharp$ intervenant dans le corollaire \ref{c8.2}. 
 Pour cela, il faut utiliser la catégorie des ``motifs de Chow sur $C$'' définie par Corti et Hanamura \cite{CH}, et leur foncteur de réalisation $l$-adique.

\section{Motifs au point g\'en\'erique}\label{s9}

\begin{defn}\phantomsection Soit $(A,\sim)$ un couple ad\'equat, et soit $k$ un corps. Pour tout $n\ge 0$, 
on note $d_{\le n} \Mot_\sim^\eff(k,A)$ (resp. $d_{\le n} \Mot_\sim^\o(k,A)$) la sous-cat\'egorie 
\'epaisse de $\Mot_\sim^\eff(k,A)$ (resp. de $\Mot_\sim^\o(k,A)$) engendr\'ee par les motifs des 
vari\'et\'es de dimension $\le n$. On note $d_n \Mot_\sim^\eff(k,A)$ (resp. $d_n \Mot_\sim^\o(k,A)$) 
le quotient de $d_{\le n} \Mot_\sim^\eff(k,A)$ (resp. de $d_{\le n} \Mot_\sim^\o(k,A)$) par l'id\'eal 
des morphismes se factorisant \`a travers un objet de $d_{\le n-1} \Mot_\sim^\eff(k,A)$ 
(resp. de $d_{\le n-1} \Mot_\sim^\o(k,A)$).
\end{defn}

Ces catégories ont été étudiées dans \cite[\S 7.8.3]{kmp}: on a pour tout $n\ge 0$ un diagramme commutatif de cat\'egories et foncteurs:
\begin{equation}\label{eq9.2}
\xymatrix{
& d_{\le n} \Mot_\sim^\eff(k,A)\ar[dl]_{R_n}\ar[dr]^{P_n}\\
d_n \Mot_\sim^\eff(k,A)\ar[dr]^{Q_n} && d_{\le n} \Mot_\sim^\o(k,A)\ar[dl]_{S_n}\\
& d_n \Mot_\sim^\o(k,A)
}
\end{equation}

Rappelons de \cite{kmp} que la ``dualit\'e de Cartier sup\'erieure''
$M\mapsto \uHom(M,\bL^{\otimes n})$ induit une dualit\'e sur $d_n \Mot_\sim^\o(k,A)$. De plus, si 
$\sim =\rat$ et $A$ est une $\Q$-algèbre, on a pour deux vari\'et\'es projectives lisses $X,Y$ de dimension $n$
\begin{equation}\label{eq9.3}
d_n \Chow^\o(k,A)(\bar h^\o(X),\bar h^\o(Y)) =CH^n(X\times Y)\otimes A/\equiv
\end{equation}
où $\equiv$ est le sous-$A$-module engendr\'e par les classes de sous-vari\'et\'es irr\'eductibles 
qui ne sont pas dominantes sur l'un des facteurs $X,Y$ (\cite[(7.19)]{kmp}; maintenant $\Q$ pourrait être remplacé par $\Z[1/p]$). Autrement dit, 
$d_n \Mot_\sim^\o(k,A)(\bar h^\o(X),\bar h^\o(Y))$ est le $A$-module des \emph{correspondances 
au point g\'en\'erique} de Bloch, Rovinsky et Beilinson \cite{beilinson}. Ces groupes de correspondances apparaissent aussi dans Fulton \cite[ex. 16.1.2 (b)]{fulton}.

Voici une autre expression de ce groupe de correspondances en termes de $0$-cycles:
\enlargethispage*{20pt}

\begin{lemme}\phantomsection\label{l9.1} On a
\begin{multline*}
d_n \Chow^\o(k,A)(\bar h^\o(X),\bar h^\o(Y)) =\\
\Coker\left(\bigoplus_{Z\subsetneq Y} CH_0(Z_{k(X)})\to CH_0(Y_{k(X)})\right)\otimes A
\end{multline*}
où $Z$ décrit les fermés propres de $Y$.
\end{lemme}

\begin{proof} C'est clair en écrivant l'idéal $\equiv$ comme somme de deux idéaux: l'un ``à droite'' et l'autre ``à gauche''.
\end{proof}

\begin{thm}\phantomsection\label{t9.1} Soit $f:k\to K$ une extension de type fini, de degr\'e de transcendance $d$. Soit $A$ une $\Z[1/p]$-algèbre, où $p$ est l'exposant caractéristique. Alors, pour tout $n\ge 0$,\\
a)  $f_\sharp d_{\le n} \Mot_\sim^\o(K,A)\subset d_{\le n+d} \Mot_\sim^\o(K,A)$.\\
b) $f_\sharp$ induit un foncteur $f_\sharp^n: d_n\Mot_\sim^\o(K,A)\to d_{n+d} \Mot_\sim^\o(K,A)$.\\
c) Soient $X,Y\in \Sm^\proj(K)$ de dimension $n$. Supposons que $K/k$ admette un mod\`ele projectif 
lisse $S$ et que $X$ et $Y$ se prolongent en des $S$-sch\'emas projectifs $\sX,\sY$, lisses sur $k$. 
Alors le conoyau de
\[
d_n \Chow^\o(K,A)(\bar h^\o(X),\bar h^\o(Y))\to d_{n+d} 
\Chow^\o(k,A)(f_\sharp^n \bar h^\o(X),f_\sharp^n \bar h^\o(Y))
\]
est isomorphe à
\[ \frac{CH^{n+d}(\sX\times_k\sY-\sX\times_S\sY)\otimes A}{\overline{\equiv}}
\]
o\`u $\overline{\equiv}$ d\'esigne l'image de $\equiv\subset CH^{n+d}(\sX\times_k\sY)\otimes A$. Il est aussi isomorphe à
\[\Coker\left(\bigoplus_{Z\subsetneq \sY} CH_0(Z_{k(\sX)})\to CH_0(\sY_{k(X)}-Y_{K(X)})\right)\otimes A\]
(noter que $k(\sX)=K(X)$).
\end{thm}

\begin{proof} a) est clair vu la preuve du th\'eor\`eme \ref{t4}, et b) r\'esulte imm\'ediatement de 
a). D\'emontrons c): pour commencer, l'homomorphisme
\begin{align}
\Chow^\o(K,A)(h^\o(X),h^\o(Y))&\by{f_\sharp}  
\Chow^\o(k,A)(f_\sharp h^\o(X),f_\sharp h^\o(Y))\label{eq14.1}\\
\intertext{est induit par l'unit\'e}
h^\o(Y)&\to f^*f_\sharp h^\o(Y)\notag\\
\intertext{elle-m\^eme d\'ecrite par l'immersion ferm\'ee}
Y&\to \sY_K\label{eq14.2}\\
\intertext{fibre g\'en\'erique du morphisme graphe}
\sY&\to\sY\times_k S.\notag
\end{align}

\enlargethispage*{25pt}

L'homomorphisme \eqref{eq14.1} est donc donn\'e par
\[CH_0(Y_{K(X)})\otimes A\to CH_0(\sY_{k(\sX)})\otimes A\]
o\`u le $K(X)$-morphisme $Y_{K(X)})\to \sY_{k(\sX)}$ est d\'eduit du $K$-morphisme 
\eqref{eq14.2} par extension des scalaires de $K$ \`a $K(X)$. Ce $K(X)$-morphisme est la fibre 
g\'en\'erique de l'immersion ferm\'ee
\[\sY\times_S\sX\to \sY\times_k\sX.\]

On a un diagramme commutatif
\[\begin{CD}
CH_n(\sX\times_S\sY) @>>> CH_n(\sX\times_k\sY) &\to& CH_n(\sX\times_k\sY-\sX\times_S\sY) &\to& 0\\
@VVV @VVV @VVV\\ 
CH_0(Y_{K(X)})@>>> CH_0(\sY_{k(\sX)})&\to& CH_0(\sY_{k(X)}-Y_{K(X)}) &\to& 0\\
@VVV @VVV\\
\displaystyle\frac{CH_n(X\times_K Y)}{\equiv}@>>> \displaystyle\frac{CH_{n+d}(\sX\times_k\sY)}{\equiv}
\end{CD}\]
o\`u la suite sup\'erieure est exacte et o\`u les fl\`eches verticales sont surjectives. D'o\`u le premier isomorphisme de c); le second se déduit du lemme \ref{l9.1}.
\end{proof}

\begin{rque}\phantomsection\label{r9.2} Pour $d=n=1$, on obtient la ``description plus fonctorielle'' de l'homomorphisme $r$ de \cite[Prop. 3.1 a)]{surfsurcourbe}, promise dans la preuve de loc. cit. Ceci permet de montrer que le motif $\sha(A,K/k)$ de la définition \ref{d8.1} coïncide avec celui introduit dans \cite[prop. 3.2]{surfsurcourbe} (ce qui démontre l'unicité de ce dernier à isomorphisme unique près, et sa fonctorialité en $A$). En effet, en appliquant le foncteur $S_2$ de \eqref{eq9.2} à \eqref{eq11.3} et en tenant compte du théorème \ref{t9.1} b), on obtient
\[f_\sharp^2(S_1(h_1^\o(A))\simeq S_2(\sha^\o(A,K/k)).\]

Revenant à la démonstration de la proposition \ref{c8.1}, on voit avec ses notations que $S_2(\sha^\o(A,K/k))$ est découpé sur $S_2(t_2^\o(S))$ par le projecteur $f_\sharp^2(S_1(\pi))$ qui, vu \eqref{eq9.3}, n'est autre que celui de \cite[dém. de la prop. 3.2]{surfsurcourbe}. On conclut en observant que 
\[\End_{\Mot_\rat^\eff(k,\Q)}(t_2(S))\iso \End_{d_2\Mot_\rat^\o(k,\Q)}(S_2(t_2^\o(S)))\]
via $S_2P_2$, d'après \cite[th. 7.4.3]{kmp}.
\end{rque}

%Rappelons que si $l$ est un nombre premier inversible dans $k$, la réalisation $l$-adique $\tilde \rho_l(\sha(A,K/k))$ de \ref{s2.2.3} est isomorphe à $V_l(\Sha(A,Kk_s))(-1)$, où $\Sha(A,Kk_s)$ est le groupe de Tate-\v Safarevi\v c ``géométrique'' de $A$ (vu comme module galoisien).

\begin{rque}\phantomsection\label{r9.1} On peut comparer équivalences rationnelle et numérique. Avec les notations du théorème \ref{t9.1} et $A=\Q$ (qu'on omet dorénavant), cela donne un diagramme commutatif:
\begin{equation}\label{eq9.1}\begin{CD}
d_n \Chow^\o(K)(\bar h^\o(X),\bar h^\o(Y))@>f_\sharp^n>> d_{n+d} \Chow^\o(k)(\bar h^\o(\sX),\bar h^\o(\sY))\\
@V\pi_K VV @V\pi_k VV\\
d_n \Mot_\num^\o(K)(\bar h^\o(X),\bar h^\o(Y))@>f_\sharp^n>> d_{n+d} \Mot_\num^\o(k)(\bar h^\o(\sX),\bar h^\o(\sY)).
\end{CD}
\end{equation}

Prenons $d=n=1$ et $X=Y$; donc $X$ est une courbe sur $K$ et $\sX$ est une surface sur $k$, modèle projectif lisse de $X$. Dans \eqref{eq9.1}, le premier membre est une $\Q$-algèbre semi-simple d'après le théorème de Weil \cite[ch. VI, th. 22]{weil} et la théorie des variétés abéliennes (complète réductibilité de Poincaré); plus précisément, $\pi_K$ est un isomorphisme. Les morphismes horizontaux sont des homomorphisme de $\Q$-algèbres, puisqu'il proviennent de foncteurs. Si celui du haut est surjectif, alors son second membre est aussi semi-simple (en particulier de dimension finie); mais alors $\pi_k$ est un homomorphisme surjectif de $\Q$-algèbres semi-simples, ce qui nous rapprocherait fortement de la conjecture de Bloch pour $\sX$ sous la forme reformulée par ce dernier, Rovinsky et Beilinson \cite[1.4]{beilinson}.

\enlargethispage*{40pt}

Partant de $\sX$, cette stratégie est sans espoir si on choisit la fibration $\sX\to S$ au hasard. Prenons par exemple la projection $\pi:\sX=E_1\times E_2\to E_1=S$, où $E_1$ et $E_2$ sont deux courbes elliptiques non isogènes. On calcule facilement que $t_2(E_1\times E_2) = h_1(E_1)\otimes h_1(E_2)$. En tant que motif numérique, son algèbre d'endomorphismes est donc $\End^0(E_1)\otimes \End^0(E_2)$; en particulier, cette algèbre est toujours un corps et donc $t_2(E_1\times E_2)$ est simple. D'autre part, l'anneau des endomorphismes de $h_1((E_2)_{k(E_1})$ (fibre générique de $\pi$) est $\End^0((E_2)_{k(E_1)}=\End^0(E_2)$, et on calcule facilement que l'homomorphisme $\End^0(E_2)\to \End^0(E_1)\otimes \End^0(E_2)$ induit par $f_\sharp^2$ est donné par $f\mapsto 1\otimes f$. Si $E_1$ est à multiplication complexe, il n'est pa surjectif; a fortiori, le morphisme horizontal supérieur de \eqref{eq9.1} ne peut pas être surjectif.

\`A l'autre extrême, si l'on fibre $\sX$ sur $\P^1$ à l'aide d'un pinceau de Lefschetz (à un éclatement de $\sX$ près), la jacobienne de la fibre générique est produit à isogénie près de sa partie constante, $\Pic^0_{\sX/k}$, et d'une variété abélienne dont le corps des endomorphismes est $\Q$ pour des raisons de monodromie: je remercie Claire Voisin de m'avoir fait observer ce point. Dans l'exemple $\sX=E_1\times E_2$, le morphisme horizontal inférieur de \eqref{eq9.1} n'est encore pas surjectif. Trouver une bonne fibration telle que la jacobienne de la fibre générique ait un corps d'endomorphismes assez gros semble être une question difficile. 
\end{rque}

\end{document}